\newtheorem{definition}{Definition}
\newtheorem{lemma}{Lemma}
\newtheorem{theorem}{Theorem}
\newtheorem{proposition}{Proposition}
\newtheorem{corollary}{Corollary}
\newtheorem{remark}{Remark}
\newenvironment{Assumption}[1]
  {\innerassumption}
  {\endinnerassumption}
\renewenvironment{proof}[1][\proofname]{\par
\pushQED{\qed}%
\normalfont \topsep6\p@\@plus6\p@\relax
\trivlist
\item\relax
{\textbf{
#1\@addpunct{ }}}\hspace\labelsep\ignorespaces
}{%
\popQED\endtrivlist\@endpefalse
}
\title{Non-parametric estimator of a multivariate madogram for missing-data and extreme value framework}
\author[1]{Alexis Boulin\footnote{Corresponding author. Email address: \url{Alexis.Boulin@unice.fr}}}
\author[1]{Elena Di Bernardino}
\author[1]{Thomas Lalo\"{e}}
\author[2]{Gwladys Toulemonde}
\affil[1]{Laboratoire J.A. Dieudonn\'{e}, UMR CNRS 7351, Universit\'{e} C\^{o}te d'Azur,  Nice, France}
\affil[2]{Univ Montpellier, CNRS, Inria, Montpellier, France}
\date{}
\begin{document}

    \maketitle

    \begin{abstract}
        The modeling of dependence between maxima is an important subject in several applications in risk analysis. To this aim, the extreme value copula function, characterised via the madogram, can be used as a margin-free description of the dependence structure. From a practical point of view, the family of extreme value distributions is very rich and arise naturally as the limiting distribution of properly normalised component-wise maxima. In this paper, we investigate the nonparametric estimation of the madogram where data are completely missing at random. We provide the functional central limit theorem for the considered multivariate madrogram correctly normalized, towards a tight Gaussian process for which the covariance function depends on the probabilities of missing. Explicit formula for the asymptotic variance is also given. Our results are illustrated in a finite sample setting  with a simulation study.
        
        \smallskip
        \noindent \textbf{Keywords and phrases} : Madogram, Extreme value copula , Missing Completely At Random (MCAR), Nonparametric estimation.  
        
        \smallskip
        \noindent \href{https://mathscinet.ams.org/mathscinet/msc/msc2020.html}{MSC2020 subject classifications} : 62D10, 62G05, 62G20, 62G32, 62H10, 62H12.
    \end{abstract}

    \section{Introduction\label{sec:1}}

    Management of environmental ressources often requires the analysis of multivariate extreme values. In climate studies, extreme events represent a major challenge due to their consequences. The problem of missing data is present in many fields in particular in environmental research (see \cite{XIA1999131}, or Section 2 in \cite{saunders}), usually due to instruments, communication and processing errors. In a time series setting, the observation periods of a multivariate series could be different and overlap only partially. The problem of estimating when unequal amounts of data are available to each variable is meaningful in many applications for financial economics where data cannot be generated as neatly overlapping samples (see \cite{Patton06estimationof}). Missing values in dependence modeling is of a prime interest as the nonparametric estimation of the empirical copula process has been tackled by \cite{segers2014hybrid} under the \textbf{M}issing \textbf{C}ompletely \textbf{A}t \textbf{R}andom (\textbf{MCAR}) condition.  In this paper, we consider nonparametric methods for assessing extremal dependencies involving variables with missing values under \textbf{MCAR} condition. We are particularly interested in the dependence structure of multivariate extreme value distribution. Formally, this concept is defined as follows.\smallskip
        
    Let $(\Omega, \mathcal{A}, \mathbb{P})$ be a probability space and $\textbf{X} = (X_1, \dots, X_d)$ be a $d$-dimensional random vector with values in $(\mathbb{R}^d, \mathcal{B}(\mathbb{R}^d))$, with $d \geq 2$. This random vector has a joint distribution function $F$ and its margins are denoted by $F_j(x) = \mathbb{P} \{X_j \leq x\}$ for all $x \in \mathbb{R}$ and $j \in \{1,\dots,d\}$. A function $C : [0,1]^d \rightarrow [0,1]$ is called a $d$-dimensional copula if it is the restriction to $[0,1]^d$ of a distribution function whose margins are given by the uniform distribution on the interval $[0,1]$. Since the work of \cite{Skla59}, it is well known that every distribution function $F$ can be decomposed as $F(\textbf{x}) = C(F_1(x_1), \dots, F_d(x_d))$, for all $\textbf{x} \in \mathbb{R}^d$ and the copula $C$ is unique if the marginals are continuous. We will  consider in the rest of the paper a $d$-dimensional random vector $\textbf{X}$ which distribution is a multivariate extreme value distribution $F$,  \emph{i.e.}, its one dimensional distributions are Generalized Extreme-Value (GEV) distributions and the copula $C$ is an extreme value copula (see \cite{gudendorf2009extremevalue}),  defined by
    \begin{equation}
    \label{evc}
        C(\textbf{u}) = \exp\left( -\ell(-\ln(u_1), \dots, -\ln(u_d)) \right), \quad \textbf{u} \in (0,1]^d,
    \end{equation}
    with $\ell : [0,\infty)^d \rightarrow [0,\infty)$ the stable tail dependence function which is convex, homogeneous of order one, namely $\ell(c x_1, \dots, c x_d) = c \ell(x_1,\dots,x_d)$ for $c > 0$ and satisfies $\max(x_1,\dots, x_d) \leq \ell(x_1,\dots,x_d) \leq x_1 + \dots + x_d$, $\forall (x_1,\dots,x_d) \in [0,\infty)^d$. Denote by $\Delta^{d-1} = \{(w_1, \dots, w_d) \in [0,1]^d : w_1 + \dots + w_d = 1\}$ the unit simplex. By homogeneity, $\ell$ is characterized by the \emph{Pickands dependence function} $A: \Delta^{d-1} \rightarrow [1/d,1]$, which is the restriction of $\ell$ to the unit simplex $\Delta^{d-1}$ :
    \begin{equation}
        \label{pick_tail}
        \ell(x_1, \dots, x_d) = (x_1 + \dots + x_d) A(w_1, \dots, w_d), \quad w_j = \frac{x_j}{x_1 + \dots + x_d},
    \end{equation}
    for $j\in \{2,\dots,d\}$ and $w_1 = 1-w_2-\dots-w_d$ with $(x_1,\dots,x_d) \in [0,\infty)^d \setminus \{\textbf{0}\}$. Notice that, for every $\textbf{w} \in \Delta^{d-1}$ and $u \in ]0,1[$
    \begin{equation}
        \label{evc_section}
        C(u^{w_1}, \dots, u^{w_d}) = u^{A(\textbf{w})}.
    \end{equation}
    Based on the madogram concept from geostatistics, the $\lambda$-madogram is introduced in \cite{naveau:hal-00312758} to capture bivariate extremal dependencies. The generalization of the $\lambda$-madogram was previously proposed by \cite{fonseca2015generalized} and \cite{MARCON20171}, this quantity is defined in the latter as:
    \begin{equation}
        \label{w-mado}
        \nu(\textbf{w}) = \mathbb{E}\left[\bigvee_{j=1}^d \left\{ F_j(X_j) \right\}^{1/w_j} - \frac{1}{d} \sum_{j=1}^d \left\{F_j(X_j)\right\}^{1/w_j} \right],
    \end{equation}
    if $w_j = 0$ and $0<u<1$, then $u^{1/w_j} = 0$ by convention. The $\textbf{w}$-madogram can be interpreted as the $L_1$-distance between the maximum and the average of the uniform margins $F_1(X_1), \dots, F_d(X_d)$ elevated to the inverse of the corresponding weights $w_1, \dots, w_d$. This quantity describes the dependence structure between extremes by its relation with the Pickands dependence function as stated by the Proposition 2.2 of  \cite{MARCON20171}, namely
    \begin{equation}
        \label{w_mado_pickands}
        A(\textbf{w}) = \frac{\nu(\textbf{w})+c(\textbf{w})}{1-\nu(\textbf{w})-c(\textbf{w})},
    \end{equation}
    with $c(\textbf{w}) = d^{-1} \sum_{j=1}^d w_j/(1+w_j)$. Through this relation, it contributes to the vast literature of the estimation of the Pickands dependence function for bivariate extreme value copula (see \cite{10022049959}, \cite{DEHEUVELS1991429}, \cite{Capra1997ANE}, \cite{1081282691}) and extended to the multivariate extreme value copula (see for example \cite{GUDENDORF20123073}). Also, a test for assessing asymptotic independence in dimension $d\geq 2$ has been designed based on the $\textbf{w}$-madogram (see \cite{GUILLOU2018114}). Several methods for handling missing values in the framework of extremes have been proposed for univariate time series (see \emph{e.g.} \cite{AndreiaScotto, Ferreira2021}). However, handling missing values in the context of multivariate extreme values with $d\geq2$ is still in their infancy.
    
    \paragraph{Main results}
    
    The main contribution of this paper is to give an estimator of the $\textbf{w}$-madogram in \eqref{w-mado} involving variables with missing values and to study its asymptotic properties. As far as we know, only \cite{GuiNa} detailed the variance for the madogram of a bivariate random vector while taking the independent copula and found $1/90$. In this paper we propose improvements in three directions : we consider a general multidimensional case ($d\geq 2$), we deal with missing data and we consider a dependence structure given by an extreme value copula. Thus, we present in Theorem \ref{weak_conv_hybrid_mado} a functional central limit theorem that gives the weak convergence for the considered multivariate madogram towards a tight Gaussian process for which the covariance function depends on the probabilities of missing. When the trajectory of our empirical process is fixed, we show in Proposition \ref{Boulin} the asymptotic normality of the estimator of the multivariate madogram where explicit formula for the asymptotic variance is also given. These results are transposed to the estimation of the Pickands dependence function with missing data in Corollary \ref{asymptotic_distribution_pickands} by the use of the functional delta method.
            
    \paragraph{Notations}         
    The symbol $\triangleq$ means to be equal to. In order to shorten formulas, notations
    \begin{align*}
        &\textbf{u}_j(t) \triangleq (u_1, \dots, u_{j-1}, t, u_{j+1}, \dots, u_d), \\
        &\textbf{u}_{jk}(s,t) \triangleq (u_1,\dots,u_{j-1},s,u_{j+1}, \dots, u_{k-1}, t, u_{k+1}, \dots,u_d),
    \end{align*}
    will be adopted for $s,t \in [0,1]$, $(u_1,\dots,u_{j-1},u_{j+1}, \dots, u_d) \in [0,1]^{d-1}$ and $j,k \in \{1,\dots,d\}$ with $j<k$. The notation $\textbf{1}$ (resp. $\textbf{0}$) corresponds to the $d$-dimensional vector composed out of $1$ (resp. $0$). Similarly, we define $\textbf{1}_j(s)$, $\textbf{0}_j(s)$, $\textbf{1}_{jk}(s,t)$ and $\textbf{0}_{jk}(s,t)$ with the same idea of previous notations of this paragraph.\smallskip
            
    The following notations are also used. Given $\mathcal{X}$ an arbitrary set, let $\ell^\infty(\mathcal{X})$ denote the space of bounded real-valued functions on $\mathcal{X}$. For $f: \mathcal{X} \rightarrow \mathbb{R}$, let $||f||_\infty = sup_{x \in \mathcal{X}} |f(x)|$. Here, we use the abbreviation $Q(f) = \int f dQ$ for a given measurable function $f$ and signed measure $Q$. The arrows $\overset{a.s.}{\rightarrow}$, $\overset{d}{\rightarrow}$ denote almost sure convergence and convergence in distribution of random vectors. Weak convergence of a sequence of maps will be understood in the sense of J.Hoffman-J{\o}rgensen (see Part 1 in  \cite{vaartwellner96book}). Given that $n \in \mathbb{N}^*, X, X_n$ are maps from $(\Omega, \mathcal{A}, \mathbb{P})$ into a metric space $\mathcal{X}$ and that $X$ is Borel measurable, $(X_n)_{n\geq 1}$ is said to converge weakly to $X$ if $\mathbb{E}^* f(X_n) \rightarrow \mathbb{E}f(X)$ for every bounded continuous real-valued function $f$ defined on $\mathcal{X}$, where $\mathbb{E}^*$ denotes outer expectation in the event that $X_n$ may not be Borel measurable. In what follows, weak convergence is denoted by $X_n \rightsquigarrow X$.\smallskip
            
    The paper is organised as follows: We propose in Section \ref{theory} estimators of the $\textbf{w}$-madogram suitable to the missing data framework. We state the weak convergence of the depicted estimators. Explicit formula for the asymptotic variance are also given. In Section \ref{numerical_results}, we illustrate the performance of the considered estimator in the finite-sample framework. Section \ref{sec:4} is devoted to apply our method on a dataset with missing data and non-concomittant record periods of annual maxima rainfall in Central Eastern Canada. A discussion on our assumptions and possible extensions of this  work are presented in Section \ref{sec:4}. All the proofs are postponed to the   \ref{proof}.

    \section{Non parametric estimation of the Madogram with missing data}
            \label{theory}
            
            We consider independent and identically distributed ($\emph{i.i.d.}$) copies $\textbf{X}_1, \dots, \textbf{X}_n$ of $\textbf{X}$. In presence of missing data, we do not observe a complete vector $\textbf{X}_i$ for $i \in \{1,\dots,n\}$. We introduce $\textbf{I}_i \in \{0,1\}^d$ which satisfies, $\forall j \in \{1,\dots,d\}$, $I_{i,j} = 0$ if $X_{i,j}$ is not observed. To formalize incomplete observations, we introduce the incomplete vector $\tilde{\textbf{X}}_i$ with values in the product space $\bigotimes_{j=1}^d (\mathbb{R} \cup \{\textup{NA}\})$ (where NA denotes a missing data) such as
            \begin{equation*}
                \tilde{X}_{i,j} = X_{i,j} I_{i,j} + \textup{NA} (1-I_{i,j}), \quad i \in \{1,\dots,n\}, \, j \in \{1,\dots, d\}.
            \end{equation*}
            We thus suppose that we observe a $2d$-tuple such as
            \begin{equation}
                \label{missing_2}
                (\textbf{I}_i, \tilde{\textbf{X}}_i), \quad i \in \{1,\dots,n\},
            \end{equation}
            \emph{i.e.} at each $i \in \{1,\dots,n\}$, several entries may be missing. We also suppose that for all $i \in \{1, \dots,n \}$, $\textbf{I}_{i}$ are \emph{i.i.d} copies from $\textbf{I} = (I_1,\dots, I_d)$ where $I_j$ is distributed according to a Bernoulli random variable $\mathcal{B}(p_j)$ with $p_j = \mathbb{P}(I_j = 1)$ for $j \in \{1,\dots,d\}$. We denote by $p$ the probability of observing completely a realization from $\textbf{X}$, that is $p = \mathbb{P}(I_1=1, \dots, I_d = 1)$. Let us now define the empirical cumulative distribution in case of missing data, we write for notational convenience $\{ \tilde{\textbf{X}}_i \leq \textbf{x}\} \triangleq \{\tilde{X}_{i,1} \leq x_1, \dots, \tilde{X}_{i,d} \leq x_d\}$ and $n_j = \sum_{i=1}^n I_{i,j}$,
            \begin{align}
                \label{F_X_G_Y_missing}
                &\hat{F}_{n,j} (x) = \frac{\sum_{i=1}^n \mathds{1}_{\{\tilde{X}_{i,j} \leq x\}}I_{i,j}}{n_j}, \, \forall x \in \mathbb{R}, \quad \hat{F}_n(\textbf{x}) = \frac{\sum_{i = 1}^n \mathds{1}_{\{\tilde{\textbf{X}}_i \leq \textbf{x}\}} \Pi_{j=1}^d I_{i,j}}{\sum_{i = 1}^n  \Pi_{j=1}^dI_{i,j}}, \, \forall \textbf{x} \in \mathbb{R}^d,
            \end{align}
            where $\{\tilde{X}_{i,j} \leq x\} = \emptyset$ (\emph{resp.} $\{\tilde{\textbf{X}}_{i} \leq \textbf{x}\} = \emptyset$) if $\tilde{X}_{i,j} = \textup{NA}$ (\emph{resp.} if there exists $j \in \{1, \dots, d\}$ such that $\tilde{X}_{i,j} = \textup{NA}$). The idea raised here is to estimate non parametrically the margins using all available data of the corresponding series. To avoid dealing with points at the boundary of the unit square, it is more convenient to work with scaled ranks (see for example \cite{10.1214/08-AOS672}) defined explicitely by
            \begin{equation}
                \label{eq:rank_based}
                 \widetilde{U}_{i,j} =  \frac{n_j}{n_j+1} \hat{F}_{n,j} (\tilde{X}_{i,j})=\frac{1}{n_j+1} \sum_{k=1}^n \mathds{1}_{\{ \tilde{X}_{k,j} \leq \tilde{X}_{i,j}\}} I_{i,j}, \quad j \in \{1,\dots,d\}.
            \end{equation}
            We recall the definition of the \emph{hybrid copula estimator} introduced by \cite{segers2014hybrid}
            \begin{equation*}
                \hat{C}_n^\mathcal{H}(\textbf{u}) = \hat{F}_n(\hat{F}_{n,1}^\leftarrow(u_1), \dots, \hat{F}_{n,d}^\leftarrow(u_d)), \quad \textbf{u} \in [0,1]^d,
            \end{equation*}
            where $\hat{F}_{n,j}^{\leftarrow}$ denotes the generalized inverse function of $\hat{F}_{n,j}$ for $j \in \{1,\dots,d\}$, \emph{i.e.} $\hat{F}_{n,j}^\leftarrow(u) = \inf \{ x\in \mathbb{R} | \hat{F}_{n,j}(x) \geq u \}$ with $0<u<1$. The normalized estimation error of the hybrid copula estimator is
            \begin{equation}
                \label{hybrid_copula}
                \mathbb{C}_n^\mathcal{H}(\textbf{u}) = \sqrt{n} \left( \hat{C}_{n}^\mathcal{H}(\textbf{u}) - C(\textbf{u}) \right), \quad \textbf{u} \in [0,1]^d.
            \end{equation}
            On the condition that the first-order partial derivatives of the copula function $C$ exists and are continuous on a subset of the unit hypercube, \cite{10.3150/11-BEJ387} obtained weak convergence of the normalized estimation error of the classical empirical copula process (see \cite{1132813}). To satisfy this condition, we introduce the following assumption as suggested in \cite{10.3150/11-BEJ387} (see Example 5.3).
            \begin{Assumption}{A}
                \label{Cond_1}
                \begin{enumerate}
                    \item[]
                    \item The distribution function $F$ has continuous margins $F_1, \dots, F_d$. \label{Cond_1_i}
                    \item For every $j \in \{1,\dots,d\}$, the first-order partial derivative $\dot{\ell}_j$ of $\ell$ with respect to $x_j$ exists and is continuous on the set $\{x \in [0,\infty)^d: x_j > 0\}$. \label{Cond_1_ii}
                \end{enumerate} 
            \end{Assumption}
            The Assumption \ref{Cond_1}\ref{Cond_1_i} guarantees that the representation $F(\textbf{x}) = C(F_1(x_1),\dots, F_d(x_d))$ is unique on the range of $(F_1, \dots, F_d)$. Under the Assumption \ref{Cond_1}\ref{Cond_1_ii}, the first-order partial derivatives of $C$ with respect to $u_j$ denoted as $\dot{C}_j$ exists and are continuous on the set $\{ \textbf{u} \in [0,1]^d : 0 < u_j < 1 \}$. We now propose an estimator of the $\textbf{w}$-madogram defined in Equation (\ref{w-mado}) under a general context with possible missing data.
            \begin{definition}
            \label{def_hybrid_lambda_fmado}
                Let $(\textbf{I}_i, \tilde{\textbf{X}}_i)_{i=1}^n$ be a sample given by Equation \eqref{missing_2}, we define the hybrid nonparametric estimator of the $\textbf{w}$-madogram in Equation \eqref{w-mado} by
                \begin{equation}
                    \label{hybrid_lambda_fmado}
                    \hat{\nu}_n^\mathcal{H}(\textbf{w}) = \frac{1}{\sum_{i=1}^n \Pi_{j=1}^d I_{i,j}} \sum_{i = 1}^n \left[ \left(\bigvee_{j=1}^d \widetilde{U}_{i,j}^{1/w_j} - \frac{1}{d} \sum_{j=1}^d\widetilde{U}_{i,j}^{1/w_j}\right)\Pi_{j=1}^d I_{i,j} \right],
                \end{equation}
                where $\widetilde{U}_{i,j}$  are scaled ranks defined as in  Equation \eqref{eq:rank_based}.
            \end{definition}
            The intuitive idea here is to estimate the margins using all available data from the corresponding variables and estimate $\nu(\textbf{w})$ using only the overlapping data. Notice  that in the complete data framework, \emph{i.e.} when $p = 1$ we retrieve a variation of the $\textbf{w}$-madogram such as defined in \cite{MARCON20171}, namely
            \begin{equation*}
                \hat{\nu}_n(\textbf{w}) = \frac{1}{n} \sum_{i = 1}^n \left[ \bigvee_{j=1}^d\widetilde{U}_{i,j}^{1/w_j} - \frac{1}{d} \sum_{j=1}^d\widetilde{U}_{i,j}^{1/w_j}\right],
            \end{equation*}
            with $\widetilde{U}_{i,j}$  in $\{1/(n+1), \dots, n/(n+1)\}$.\smallskip
            
            Note that the theoretical quantity defined in \eqref{w-mado} does verify endpoint constraints, \emph{i.e.} $\nu(\textbf{e}_j) = (d-1)/2d$ for all $j\in \{1,\dots,d\}$ where $\textbf{e}_j$ is the jth vector of the canonical basis. 
            \begin{remark}
                \label{remark_corr}
                Unlike $\nu$, the estimator defined in \eqref{hybrid_lambda_fmado} does not verify the endpoints constraints. In addition, the variance at $\textbf{e}_j$ does not equal 0. Indeed, suppose that we evaluate this statistic at $\textbf{w} = \textbf{e}_j$ as $\widetilde{U}_{i,j} \in (0,1)$ for every $i \in \{1,\dots,n\}$ and $j \in \{1,\dots, d\}$ we obtain the following estimator
                \begin{equation*}
                    \hat{\nu}_n^\mathcal{H}(\textbf{e}_j) = \frac{1}{\sum_{i=1}^n \Pi_{j=1}^d I_{i,j}} \sum_{i=1}^n\left[ \widetilde{U}_{i,j} - \frac{1}{d}\widetilde{U}_{i,j} \right]\Pi_{j=1}^d I_{i,j}.
                \end{equation*}
                In this situation, the sample $\left(\widetilde{U}_{i,1}, \dots, \widetilde{U}_{i,j-1}, \widetilde{U}_{i,j+1}, \dots, \widetilde{U}_{i,d}\right)_{i=1}^n$ is taken into account through the indicators sequence $(I_{i,1}, \dots, I_{i,j-1}, I_{i,j+1}, \dots, I_{i,d})_{i=1}^n$ and induces a supplementary variance when estimating. 
            \end{remark}
            Proceeding as in \cite{naveau:hal-00312758} for the bivariate case and complete data framework, we propose below a  modified estimator which satisfies the endpoint constraints in the general multivariate framework with possible missing data. 
            \begin{definition}
                Let $(\textbf{I}_i, \tilde{\textbf{X}}_i)_{i=1}^n$ be a sample given by Equation \eqref{missing_2} and $\hat{\nu}_n^\mathcal{H}(\textbf{w})$ be as in \eqref{hybrid_lambda_fmado}. Given continuous functions $\lambda_1, \dots, \lambda_d : \Delta^{d-1} \rightarrow \mathbb{R}$ verifying $\lambda_j(\textbf{e}_k) = \delta_{jk}$ (the Kronecker delta) for $j,k \in \{1,\dots,d\}$, we define the hybrid corrected estimator of the $\textbf{w}$-madogram by
                \begin{align}
                \hat{\nu}_n^{\mathcal{H}*}(\textbf{w}) = \hat{\nu}_n^\mathcal{H}(\textbf{w})
                -\sum_{j=1}^d \frac{\lambda_j(\textbf{w})(d-1)}{d} \left[\frac{1}{\sum_{i=1}^n \Pi_{j=1}^d I_{i,j}} \sum_{i=1}^n \left( \widetilde{U}_{i,j}^{1/w_j}\Pi_{j=1}^d I_{i,j} \right) -\frac{w_j}{1+w_j} \right].
                \label{corrected_lambda_FMado_hybrid}
            \end{align}
            \end{definition}
            \begin{remark}
                \label{rem:remark_2}
                One has often that endpoint corrections do not have an impact to the asymptotic behavior with complete data framework and unknown margins (see Section 2.3 and 2.4 of \cite{10.1214/08-AOS672}). That is not always the case in the missing data framework and this feature is of interest as discussed in Remark \ref{remark_corr}. 
            \end{remark}
            In the following we prove a functional central limit theorem (see Theorem \ref{weak_conv_hybrid_mado}) concerning the weak convergence of the following processes
            \begin{equation}
                \label{processes}
                \sqrt{n} \left(\hat{\nu}_n^{\mathcal{H}}(\textbf{w}) - \nu(\textbf{w}) \right)_{\textbf{w} \in \Delta^{d-1}}, \quad  \sqrt{n} \left(\hat{\nu}_n^\mathcal{H*}(\textbf{w}) - \nu(\textbf{w}) \right)_{\textbf{w} \in \Delta^{d-1}}.
            \end{equation}
            Before presenting this result, we introduce below a specific assumption on the missing mechanism.
            \begin{Assumption}{B}
                \label{Cond_2}
                We suppose that for all $i \in \{1, \dots, n\}$, the vector $\textbf{I}_i$ and $\textbf{X}_i$ are independent, i.e. the data are missing completely at random (\textbf{MCAR}).
            \end{Assumption}
            
            Without missing data, the weak convergence of the normalized estimation error of the empirical copula process has been proved by \cite{10.2307/3318798} under a more restrictive condition than Assumption \ref{Cond_1}. The difference being that $C$ should be continuously differentiable on the closed hypercube. Denoting by $D([0,1]^2)$ the Skorohod space, this statement makes use of previous results on the Hadamard differentiability of the map $\phi : D([0,1]^2) \rightarrow \ell^\infty([0,1]^2)$ which transforms the cumulative distribution function $F$ into its copula function $C$ (see also Lemma 3.9.28 from \cite{vaartwellner96book}). With the hybrid copula estimator, we need a technical assumption in order to guarantee the weak convergence of the process $\mathbb{C}_n^\mathcal{H}$ in \eqref{hybrid_copula} (see \cite{segers2014hybrid}). We note for convenience marginal distributions and quantile functions into vector valued functions $\textbf{F}_d$ and $\textbf{F}_d^\leftarrow$:
            \begin{align*}
                &\textbf{F}_d(\textbf{x}) = (F_1(x_1), \dots, F_d(x_d)), \quad \textbf{x} \in \mathbb{R}^d, 
                &\textbf{F}_d^\leftarrow(\textbf{u}) = (F_1^\leftarrow(u_1), \dots, F_d^\leftarrow(u_d)), \quad \textbf{u} \in [0,1]^d.
            \end{align*}
            \begin{Assumption}{C}
                \label{Cond_3}
                In the space $\ell^{\infty} (\mathbb{R}^d) \otimes (\ell^{\infty}(\mathbb{R}),\dots, \ell^{\infty}(\mathbb{R}))$ equipped with the topology of uniform convergence, we have the joint weak convergence 
                \begin{align*}
                    \left(\sqrt{n} (\hat{F}_n - F); \sqrt{n}(\hat{F}_{n,1} - F_1), \dots,\sqrt{n}(\hat{F}_{n,d} - F_d) \right) 
                    \rightsquigarrow \left(\alpha \circ \textbf{F}_d, \beta_1 \circ F_1, \dots,\beta_d \circ F_d\right),
                \end{align*}
                where the stochastic processes $\alpha$ and $\beta_j, j \in \{1,\dots,d\}$ take values in $\ell^{\infty} ([0,1]^d)$ and $\ell^{\infty} ([0,1])$ respectively, and are such that $\alpha \circ F$ and $\beta_j \circ F_j$ have continuous trajectories on $[-\infty, \infty]^d$ and $[-\infty, \infty]$ almost surely.
            \end{Assumption}
            Under Assumptions \ref{Cond_1} and \ref{Cond_3}, the stochastic process $\mathbb{C}_n^\mathcal{H}$ in \eqref{hybrid_copula} converges weakly to the tight Gaussian process $S_C$ defined by
            \begin{equation}
                \label{Segers_process}
                S_C(\textbf{u}) = \alpha(\textbf{u}) -\sum_{j=1}^d \dot{C}_j(\textbf{u})\beta_j(u_j), \quad \forall \textbf{u} \in [0,1]^d.
            \end{equation}
            Lemma \ref{lemma_1} in \ref{proof} states that the estimator $\hat{F}_n$ of the joint distribution and estimators of margins $\hat{F}_{n,j}$ defined in Equation \eqref{F_X_G_Y_missing}   verify Assumption \ref{Cond_3} (see \ref{proof} for details). We now have all tools in hand to consider the weak convergence of the stochastic processes in Equation \eqref{processes}. We note by $\{ \textbf{X} \leq \textbf{F}_d^{\leftarrow}(\textbf{u})\} = \{ X_1 \leq F_1^\leftarrow(u_1), \dots, X_d \leq F_d^\leftarrow(u_d)\}$.
            \begin{theorem}
                \label{weak_conv_hybrid_mado}
                Let $\mathbb{G}$ a tight Gaussian process and continuous functions $\lambda_1, \dots, \lambda_d : \Delta^{d-1} \rightarrow \mathbb{R}$ verifying $\lambda_j(\textbf{e}_k) = \delta_{jk}$. If $C$ is an extreme value copula with Pickands dependence function $A$ and under Assumptions \ref{Cond_1} and \ref{Cond_2}, we have the weak convergence in $\ell^\infty(\Delta^{d-1})$ for hybrid estimators defined in Equations (\ref{hybrid_lambda_fmado}) and (\ref{corrected_lambda_FMado_hybrid}), as $n \rightarrow \infty$,
                \begin{align*}
                    \sqrt{n} \left(\hat{\nu}_n^\mathcal{H}(\emph{\textbf{w}}) - \nu(\emph{\textbf{w}}) \right)_{\textbf{w} \in \Delta^{d-1}} &\rightsquigarrow \bigg( \frac{1}{d} \sum_{j=1}^d \int_{[0,1]} \alpha(\boldsymbol{1}_j(x^{w_j})) - \beta_j(x^{w_j})dx - \int_{[0,1]} S_C(x^{w_1}, \dots, x^{w_d}) dx \bigg)_{\emph{\textbf{w}} \in \Delta^{d-1}}, \\
                    \sqrt{n} \left(\hat{\nu}_n^{\mathcal{H}*}(\emph{\textbf{w}}) - \nu(\emph{\textbf{w}}) \right)_{\textbf{w} \in \Delta^{d-1}} &\rightsquigarrow \bigg( \frac{1}{d} \sum_{j=1}^d(1+\lambda_j(\emph{\textbf{w}})(d-1)) \int_{[0,1]} \alpha(\boldsymbol{1}_j(x^{w_j})) - \beta_j(x^{w_j})dx \\ & - \int_{[0,1]} S_C(x^{w_1}, \dots, x^{w_d}) dx \bigg)_{\emph{\textbf{w}} \in \Delta^{d-1}},
                \end{align*}
                where $S_C$ is defined in \eqref{Segers_process}, $\alpha(\emph{\textbf{u}}) = p^{-1} \mathbb{G}(\mathds{1}_{\{\mathbf{X} \leq \mathbf{F}_d^{\leftarrow}(\mathbf{u}),\textbf{I}=\textbf{1}\}} - C(\emph{\textbf{u}}) \mathds{1}_{\{\textbf{I}=\textbf{1}\}})$ and $\beta_j(u_j) = p_j^{-1} \mathbb{G}(\mathds{1}_{\{X_j \leq F_j^{\leftarrow}(u_j), I_j = 1\}} - u_j \mathds{1}_{\{I_j = 1\}})$ for $j \in \{1, \dots, d\}$ and $\emph{\textbf{u}} \in [0,1]^d$. For $(\emph{\textbf{u}},\emph{\textbf{v}},v_k) \in [0,1]^{2d+1}$, for $j \in \{1,\dots,d\}$ and $j < k$ the covariance functions of the processes $\alpha$ and $\beta_j$ are given by
                    \begin{align*}
                        & cov\left(\beta_j(u_j), \beta_j(v_j) \right) = p_j^{-1}\left( u_j \wedge v_j - u_j v_j \right), \\
                        & cov\left(\beta_{j}(u_j), \beta_k(v_k) \right) = \frac{p_{jk}}{p_j p_k} \left( C(\boldsymbol{1}_{j,k}(u_j,v_k)) - u_jv_k \right),
                    \end{align*}
                    and
                    \begin{align*}
                        & cov\left(\alpha(\emph{\textbf{u}}), \alpha(\emph{\textbf{v}})\right) =  p^{-1} \left( C(\emph{\textbf{u}} \wedge \emph{\textbf{v}}) - C(\emph{\textbf{u}}) C(\emph{\textbf{u}}) \right), \\
                        & cov\left(\alpha(\emph{\textbf{u}}), \beta_j(v_j)\right) = p_j^{-1}\left( C(\emph{\textbf{u}}_j(u_j\wedge v_j)) - C(\emph{\textbf{u}}) v_j \right),
                    \end{align*}
                    where $\emph{\textbf{u}} \wedge \emph{\textbf{v}}$ denotes the vector of componentwise minima and $p_{jk} = \mathbb{P}(I_j = 1, I_k = 1)$. 
            \end{theorem}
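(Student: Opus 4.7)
The plan is to reduce Theorem \ref{weak_conv_hybrid_mado} to the functional CLT for the hybrid copula process $\mathbb{C}_n^\mathcal{H}\rightsquigarrow S_C$ established by \cite{segers2014hybrid}, which applies here because Lemma \ref{lemma_1} in \ref{proof} verifies Assumption \ref{Cond_3} under MCAR. The elementary identities $a=\int_0^1 \mathds{1}(a>t)\,dt$ for $a\in[0,1]$ and $\bigvee_{j=1}^d a_j=1-\int_0^1\prod_{j=1}^d\mathds{1}(a_j\le t)\,dt$ for $\textbf{a}\in[0,1]^d$, applied with $a_j=\widetilde{U}_{i,j}^{1/w_j}$, turn each $\bigvee_j\widetilde{U}_{i,j}^{1/w_j}$ and each $\widetilde{U}_{i,j}^{1/w_j}$ into an integral over $x\in(0,1)$ of $\mathds{1}(\widetilde{U}_{i,j}\le x^{w_j})$. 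Summing over $i\in\mathcal{I}=\{i:\prod_j I_{i,j}=1\}$ and dividing by $|\mathcal{I}|$ one gets
\begin{equation*}
\hat{\nu}_n^\mathcal{H}(\textbf{w})=\int_0^1\Bigl[\tfrac{1}{d}\sum_{j=1}^d \hat{C}_n^\mathcal{H}(\boldsymbol{1}_j(x^{w_j}))-\hat{C}_n^\mathcal{H}(x^{w_1},\ldots,x^{w_d})\Bigr]\,dx+r_n(\textbf{w}),
\end{equation*}
and the analogous formula (with $r_n\equiv 0$) for $\nu(\textbf{w})$ in terms of $C$. The remainder $r_n$ absorbs both the $n_j/(n_j+1)$ rescaling hidden in $\widetilde{U}_{i,j}$ and the mismatch between $\mathds{1}(\widetilde{U}_{i,j}\le u)$ and $\mathds{1}(X_{i,j}\le\hat{F}_{n,j}^\leftarrow(u))$; these perturbations are of order $O_\mathbb{P}(n^{-1})$ uniformly in $\textbf{w}\in\Delta^{d-1}$.

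\textbf{Delta step and identification of the limit.} Subtracting and multiplying by $\sqrt{n}$, the first announced weak convergence follows from applying the bounded linear operator
\begin{equation*}
\phi:f\in\ell^\infty([0,1]^d)\longmapsto\Bigl(\textbf{w}\mapsto\int_0^1\bigl[\tfrac{1}{d}\sum_{j=1}^d f(\boldsymbol{1}_j(x^{w_j}))-f(x^{w_1},\ldots,x^{w_d})\bigr]\,dx\Bigr)\in\ell^\infty(\Delta^{d-1})
\end{equation*}
to $\mathbb{C}_n^\mathcal{H}$: boundedness $\|\phi(f)\|_\infty\le(d+1)/d\,\|f\|_\infty$ provides continuity, the continuous mapping theorem gives $\sqrt{n}(\hat{\nu}_n^\mathcal{H}-\nu)\rightsquigarrow\phi(S_C)$, and $\phi(S_C)$ is a tight Gaussian process by linearity. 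To put $\phi(S_C)$ in the stated form it only remains to verify $S_C(\boldsymbol{1}_j(x^{w_j}))=\alpha(\boldsymbol{1}_j(x^{w_j}))-\beta_j(x^{w_j})$: since $C(\boldsymbol{1}_j(u_j))=u_j$, differentiation gives $\dot{C}_j(\boldsymbol{1}_j(u_j))=1$, while for $k\neq j$ the value $\beta_k(1)=p_k^{-1}\mathbb{G}(\mathds{1}_{\{I_k=1\}}-\mathds{1}_{\{I_k=1\}})=0$ almost surely kills every other cross term.

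\textbf{Corrected estimator and covariances.} Because $|\mathcal{I}|^{-1}\sum_{i\in\mathcal{I}}\widetilde{U}_{i,j}^{1/w_j}=\int_0^1(1-\hat{C}_n^\mathcal{H}(\boldsymbol{1}_j(x^{w_j})))\,dx+O_\mathbb{P}(n^{-1})$ and $w_j/(1+w_j)=\int_0^1(1-C(\boldsymbol{1}_j(x^{w_j})))\,dx$, the additional term in \eqref{corrected_lambda_FMado_hybrid} contributes, at the limit, $+\sum_j\lambda_j(\textbf{w})(d-1)/d\int_0^1[\alpha(\boldsymbol{1}_j(x^{w_j}))-\beta_j(x^{w_j})]\,dx$, which combines with the first limit to produce the prefactor $1+\lambda_j(\textbf{w})(d-1)$ in the second claim. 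Finally, the four covariance formulas follow by plugging the explicit expressions of $\alpha$ and $\beta_j$ into $\mathrm{cov}(\mathbb{G}(f),\mathbb{G}(g))=\mathbb{E}[fg]-\mathbb{E}[f]\mathbb{E}[g]$ and using Assumption \ref{Cond_2} to factorise joint expectations: e.g.~$\mathbb{E}[\mathds{1}_{\{X_j\le F_j^\leftarrow(u_j),I_j=1\}}\mathds{1}_{\{X_k\le F_k^\leftarrow(v_k),I_k=1\}}]=C(\boldsymbol{1}_{jk}(u_j,v_k))\,p_{jk}$, and similarly for the three remaining inner products.

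\textbf{Main obstacle.} The delicate part is establishing the uniform $O_\mathbb{P}(n^{-1})$ bound on $r_n(\textbf{w})$: the indicator integrand is sensitive to the $n_j/(n_j+1)$ scaling and to ties/boundary effects of the generalized inverse $\hat{F}_{n,j}^\leftarrow$, and these perturbations must be controlled uniformly over $\textbf{w}\in\Delta^{d-1}$, including near the vertices where some $w_j$ vanish. Once this representation is secured, the rest of the proof is routine composition of the known hybrid copula CLT with the continuous linear map $\phi$ and elementary bookkeeping with $S_C$.
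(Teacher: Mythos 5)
Your proposal is correct and follows essentially the same route as the paper: there, both estimators are rewritten as integrals with respect to a rank-based copula estimator $\hat{C}_n^{\mathcal{R}}$, the same continuous linear map $\phi$ and the continuous mapping theorem are applied to $\mathbb{C}_n^{\mathcal{R}}\rightsquigarrow S_C$, the limit is identified through $S_C(\boldsymbol{1}_j(x^{w_j}))=\alpha(\boldsymbol{1}_j(x^{w_j}))-\beta_j(x^{w_j})$, and the covariance formulas come from Lemma \ref{lemma_1} under the MCAR assumption. The remainder you single out as the main obstacle is disposed of in the paper by the elementary uniform bound $\sup_{\textbf{u}\in[0,1]^d}\bigl|\hat{C}_n^{\mathcal{H}}(\textbf{u})-\hat{C}_n^{\mathcal{R}}(\textbf{u})\bigr|\le 2d/(n\hat{p}_n)$ with $\hat{p}_n\rightarrow p>0$ in probability, so it is indeed asymptotically negligible uniformly in $\textbf{w}$.
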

            We use empirical process arguments formulated in \cite{vaartwellner96book} to establish such a result. Details can be found in \ref{proof_weak_conv_hybrid_mado}. The following proposition states the asymptotic distribution of the estimators and gives explicit formula for the asymptotic variances for a fixed element of the unit simplex $\Delta^{d-1}$.
            \begin{proposition}
                \label{Boulin}
                Let $\emph{\textbf{p}} = (p_1, \dots, p_d, p)$ and $\emph{\textbf{w}} \in \Delta^{d-1}$, under the framework of Theorem \ref{weak_conv_hybrid_mado}, we have
                \begin{align*}
                    \sqrt{n} \left(\hat{\nu}_n^\mathcal{H}(\emph{\textbf{w}}) - \nu(\emph{\textbf{w}}) \right) \overunderset{d}{n \rightarrow \infty}{\rightarrow} \mathcal{N}\left(0, \mathcal{S}^{\mathcal{H}}(\emph{\textbf{p}}, \emph{\textbf{w}})\right), \quad \sqrt{n} \left(\hat{\nu}_n^{\mathcal{H}*}(\emph{\textbf{w}}) - \nu(\emph{\textbf{w}}) \right) \overunderset{d}{n \rightarrow \infty}{\rightarrow} \mathcal{N}\left(0, \mathcal{S}^{\mathcal{H}*}(\emph{\textbf{p}}, \emph{\textbf{w}})\right).
                \end{align*}
                Moreover the asymptotic variances   are given by
                \begin{align*}
                    \mathcal{S}^{\mathcal{H}}(\emph{\textbf{p}}, \emph{\textbf{w}}) =& \frac{1}{d^2} \sum_{j=1}^d (p^{-1} - p_j^{-1}) \sigma^2_j(\emph{\textbf{w}}) + \sigma_{d+1}^2(\emph{\textbf{p}},\emph{\textbf{w}}) + \frac{2}{d^2} \sum_{j < k} \left(p^{-1} - p_j^{-1} - p_k^{-1} + \frac{p_{jk}}{p_jp_k}\right) \sigma_{jk}(\emph{\textbf{w}}) \\ &-\frac{2}{d} \sum_{j=1}^d(p^{-1} - p_j^{-1}) \sigma_j^{(1)}(\emph{\textbf{w}}) + \frac{2}{d} \sum_{j=1}^d \sum_{k=1}^d \left(p_k^{-1} - \frac{p_{jk}}{p_jp_k}\right) \sigma_{jk}^{(2)}(\emph{\textbf{w}}),
                \end{align*}
                and
                \begin{align*}
                    \mathcal{S}^{\mathcal{H}*}(\emph{\textbf{p}}, \emph{\textbf{w}}) =& \frac{1}{d^2} \sum_{j=1}^d (p^{-1} - p_j^{-1}) (1+\lambda_j(\emph{\textbf{w}})(d-1))^2 \sigma^2_j(\emph{\textbf{w}}) + \sigma_{d+1}^2(\emph{\textbf{p}},\emph{\textbf{w}}) \\ &+ \frac{2}{d^2} \sum_{j < k} \left(p^{-1} - p_j^{-1} - p_k^{-1} + \frac{p_{jk}}{p_jp_k}\right) (1+\lambda_j(\emph{\textbf{w}})(d-1))(1+\lambda_k(\emph{\textbf{w}})(d-1)) \sigma_{jk}(\emph{\textbf{w}}) \\ &-\frac{2}{d} \sum_{j=1}^d(p^{-1} - p_j^{-1}) (1+\lambda_j(\emph{\textbf{w}})(d-1)) \sigma_j^{(1)}(\emph{\textbf{w}}) \\ &+ \frac{2}{d} \sum_{j=1}^d \sum_{k=1}^d\left(p_k^{-1} - \frac{p_{jk}}{p_jp_k}\right) (1+\lambda_j(\emph{\textbf{w}})(d-1))\sigma_{jk}^{(2)}(\emph{\textbf{w}}),
                \end{align*}
                where explicit expressions of the functions $\sigma_j^2$ for $j \in \{1,\dots,d\}$, $\sigma_{d+1}^2$, $\sigma_{jk}$ with $j < k$, $\sigma_j^{(1)}$ with $j \in \{1,\dots,d\}$, $\sigma_{jk}^{(2)}$ for $j, k \in \{1,\dots,d\}$ are detailed in the proof for the sake of readibility.
            \end{proposition}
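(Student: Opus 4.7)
The plan is to specialise the functional central limit theorem \ref{weak_conv_hybrid_mado} to a fixed $\textbf w\in\Delta^{d-1}$ and then compute the variance of the Gaussian limit by bilinear expansion against the covariance kernels provided in the same theorem. Evaluation at a fixed $\textbf w$ is a bounded linear functional on $\ell^\infty(\Delta^{d-1})$, so the continuous mapping theorem immediately yields $\sqrt n(\hat\nu_n^{\mathcal H}(\textbf w)-\nu(\textbf w))\overset{d}{\to}G(\textbf w)$ with $G(\textbf w)=B_1(\textbf w)+B_2(\textbf w)$, where
\begin{align*}
B_1(\textbf w) &= \tfrac{1}{d}\sum_{j=1}^d\int_0^1 \!\bigl(\alpha(\textbf 1_j(x^{w_j}))-\beta_j(x^{w_j})\bigr)\,dx, \\
B_2(\textbf w) &= -\int_0^1 S_C(x^{w_1},\ldots,x^{w_d})\,dx.
\end{align*}
Because $(\alpha,\beta_1,\ldots,\beta_d)$ is jointly Gaussian, $G(\textbf w)$ is a centred Gaussian random variable, so the asymptotic normality assertion is automatic and it remains to compute $\mathcal S^{\mathcal H}(\textbf p,\textbf w)=\mathrm{Var}(B_1)+2\,\mathrm{Cov}(B_1,B_2)+\mathrm{Var}(B_2)$.

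Each of the three variance contributions is rewritten as a double integral on $[0,1]^2$ via Fubini, which is legitimate since the covariance kernels of Theorem \ref{weak_conv_hybrid_mado} are uniformly bounded on the compact unit cube. For $\mathrm{Var}(B_1)$, the diagonal pair $(j,j)$ combines $\mathrm{cov}(\alpha,\alpha)$, $\mathrm{cov}(\beta_j,\beta_j)$ and $\mathrm{cov}(\alpha,\beta_j)$ with prefactors $p^{-1}$, $p_j^{-1}$ and $p_j^{-1}$ respectively; through $\mathrm{Var}(X-Y)=\mathrm{Var}(X)+\mathrm{Var}(Y)-2\mathrm{Cov}(X,Y)$ these collapse to the clean coefficient $p^{-1}-p_j^{-1}$ multiplying
\begin{align*}
\sigma_j^2(\textbf w)\triangleq \int_{[0,1]^2}\!\bigl(s^{w_j}\wedge t^{w_j}-s^{w_j}t^{w_j}\bigr)\,ds\,dt.
\end{align*}
The off-diagonal pairs $j<k$ similarly collapse to the coefficient $p^{-1}-p_j^{-1}-p_k^{-1}+p_{jk}/(p_jp_k)$ multiplying
\begin{align*}
\sigma_{jk}(\textbf w)\triangleq \int_{[0,1]^2}\!\bigl(C(\textbf 1_{jk}(s^{w_j},t^{w_k}))-s^{w_j}t^{w_k}\bigr)\,ds\,dt.
\end{align*}
The piece $\mathrm{Var}(B_2)$ is, by definition, $\sigma_{d+1}^2(\textbf p,\textbf w)$; expanding $S_C$ via \eqref{Segers_process} and pushing covariances through Fubini makes it a weighted double integral mixing $\mathrm{cov}(\alpha,\alpha)$, $\mathrm{cov}(\alpha,\beta_j)$ and $\mathrm{cov}(\beta_j,\beta_k)$ with the weights $\dot C_j(x^{w_1},\ldots,x^{w_d})$. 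Finally $\mathrm{Cov}(B_1,B_2)$ splits into a diagonal contribution producing $\sigma_j^{(1)}(\textbf w)$ with coefficient $p^{-1}-p_j^{-1}$ and an off-diagonal contribution producing $\sigma_{jk}^{(2)}(\textbf w)$ with coefficient $p_k^{-1}-p_{jk}/(p_jp_k)$, each prefactor read off directly from Theorem \ref{weak_conv_hybrid_mado}.

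For the corrected estimator the limit given in Theorem \ref{weak_conv_hybrid_mado} differs from $G(\textbf w)$ only in that $B_1(\textbf w)$ is replaced by $B_1^*(\textbf w)=\tfrac{1}{d}\sum_{j=1}^d(1+\lambda_j(\textbf w)(d-1))\int_0^1(\alpha(\textbf 1_j(x^{w_j}))-\beta_j(x^{w_j}))\,dx$, while $B_2(\textbf w)$ is untouched. Propagating the scalar factor $1+\lambda_j(\textbf w)(d-1)$ through the bilinear expansion inserts $(1+\lambda_j(\textbf w)(d-1))^2$ in front of $\sigma_j^2$, the symmetric product $(1+\lambda_j(\textbf w)(d-1))(1+\lambda_k(\textbf w)(d-1))$ in front of $\sigma_{jk}$, and a single $(1+\lambda_j(\textbf w)(d-1))$ in front of the cross kernels $\sigma_j^{(1)}$ and $\sigma_{jk}^{(2)}$, while $\sigma_{d+1}^2(\textbf p,\textbf w)=\mathrm{Var}(B_2)$ is unchanged; this reproduces $\mathcal S^{\mathcal H*}$ term by term. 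The main obstacle is not analytic but combinatorial: one must correctly re-package all bilinear contributions into the five named $\sigma$-kernels so that the prefactors $p^{-1}-p_j^{-1}$, $p^{-1}-p_j^{-1}-p_k^{-1}+p_{jk}/(p_jp_k)$ and $p_k^{-1}-p_{jk}/(p_jp_k)$ emerge cleanly after the three or four relevant covariance formulas have been combined.
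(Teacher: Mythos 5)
Your structural argument is the same as the paper's: evaluate the limit of Theorem \ref{weak_conv_hybrid_mado} at a fixed $\textbf{w}$, note that the limit is a centred Gaussian variable, expand its variance bilinearly against the covariance kernels of $\alpha$ and $\beta_j$ (Lemma \ref{lemma_1}), and check that on vectors of the form $\textbf{1}_j(s^{w_j})$, $\textbf{1}_{jk}(s^{w_j},t^{w_k})$ the four covariance formulas produce a common kernel so that the prefactors $p^{-1}-p_j^{-1}$, $p^{-1}-p_j^{-1}-p_k^{-1}+p_{jk}/(p_jp_k)$ and $p_k^{-1}-p_{jk}/(p_jp_k)$ factor out; propagating the scalars $1+\lambda_j(\textbf{w})(d-1)$ then gives $\mathcal{S}^{\mathcal{H}*}$. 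That part is correct and matches the paper's proof step by step (your ``diagonal/off-diagonal'' labelling of the $\sigma_j^{(1)}$ versus $\sigma_{jk}^{(2)}$ contributions is slightly misleading --- $\sigma_{jk}^{(2)}$ runs over all pairs, including $j=k$ where the prefactor vanishes --- but this is harmless).

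The genuine gap is that you stop where the paper's proof really begins. The proposition explicitly promises that ``explicit expressions of the functions $\sigma_j^2$, $\sigma_{d+1}^2$, $\sigma_{jk}$, $\sigma_j^{(1)}$, $\sigma_{jk}^{(2)}$ are detailed in the proof'', and the abstract advertises an explicit formula for the asymptotic variance; with your definitions, identities such as $\mathrm{Var}(B_2)=\sigma_{d+1}^2(\textbf{p},\textbf{w})$ are purely definitional, so the displayed variance formulas carry no computable content. The bulk of the paper's proof is precisely the derivation of closed forms: writing $\dot{C}_j(u^{w_1},\dots,u^{w_d})=u^{A(\textbf{w})-w_j}\mu_j(\textbf{w})$ with $\mu_j(\textbf{w})=\dot{\ell}_j(-w_1,\dots,-w_d)$ (using homogeneity of degree $0$ of $\dot{\ell}_j$), exploiting $C(u^{w_1},\dots,u^{w_d})=u^{A(\textbf{w})}$, decomposing $\sigma_{d+1}^2$ into the pieces $\gamma_1^2,\gamma_j^2,\gamma_{1j},\tau_{jk}$ weighted by $p^{-1},p_j^{-1},p_{jk}/(p_jp_k)$, and reducing the resulting double integrals to one-dimensional integrals of the form $\int_0^1[A(\cdot)+\cdots]^{-2}ds$ via the substitutions $x=v^{w_j}$, $y=u^{1-w_j}$ and $x=w^{1-s}$, $y=w^s$ together with $-\int_0^1 w^{\alpha}\ln(w)\,dw=(\alpha+1)^{-2}$. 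This yields, e.g., $\sigma_j^2(\textbf{w})=\frac{1}{(1+w_j)^2}\frac{w_j}{2+w_j}$ and $\gamma_1^2(\textbf{w})=\frac{1}{(1+A(\textbf{w}))^2}\frac{A(\textbf{w})}{2+A(\textbf{w})}$, the expressions actually used in the simulation study and in Corollary \ref{coro_independent}. None of this extreme-value-specific computation (which is where the hypothesis that $C$ is an extreme value copula and $\textbf{w}\in\Delta^{d-1}$ is really used) appears in your proposal; indeed your closing remark that the remaining obstacle is ``not analytic but combinatorial'' mischaracterises the missing work.
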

            Considering the special case of independent copula, Corollary \ref{coro_independent} below gives a closed form of the limit variance which no longer depends on the Pickands dependence function.
            \begin{corollary}
                \label{coro_independent}
                In the framework of Theorem \ref{weak_conv_hybrid_mado} and if $C(\textbf{u}) = \Pi_{j=1}^d u_j$, then the functions $\sigma_{d+1}^2$, $\sigma_j^{(1)}$ with $j \in \{1,\dots,d\}$, have the following forms, for $\emph{\textbf{w}} \in \Delta^{d-1}$ :
                \begin{align*}
                    \sigma_{d+1}^2(\emph{\textbf{p}}, \emph{\textbf{w}}) &= \frac{1}{4} \left(\frac{1}{3p} - \sum_{j=1}^d p_j^{-1} \frac{w_j}{4-w_j} \right), \\
                    \sigma_j^{(1)}(\emph{\textbf{w}}) &= \frac{1}{2}\left[ \frac{1}{3} - \frac{1}{1+w_j} \right] + \frac{w_j}{3(1+w_j)(3+w_j)},
                \end{align*}
                and $\sigma_{jk}$ for $j < k$, $\sigma_{jk}^{(2)}$ for $j < k $ and $\sigma_{kj}^{(2)}$ with $k < j$ are constants and equal to $0$.
            \end{corollary}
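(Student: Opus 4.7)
The strategy is to start from the explicit expressions for $\sigma_j^2$, $\sigma_{d+1}^2$, $\sigma_{jk}$, $\sigma_j^{(1)}$ and $\sigma_{jk}^{(2)}$ established during the proof of Proposition \ref{Boulin}. Each of them is an integral over $[0,1]$ or $[0,1]^2$ of the covariance functionals of the Gaussian limits $\alpha$ and $\beta_j$ of Theorem \ref{weak_conv_hybrid_mado}, evaluated either along the diagonal curve $\textbf{u}(x) = (x^{w_1}, \dots, x^{w_d})$ or along the edge curves $\textbf{1}_j(x^{w_j})$. I would then specialise each expression to $C(\textbf{u}) = \prod_{j=1}^d u_j$, where two structural simplifications do most of the work.

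The first is an \emph{algebraic vanishing} for all cross-index contributions. Under independence $C(\textbf{1}_{jk}(u_j, v_k)) = u_j v_k$ for every $j \neq k$, so
\[
\text{cov}\bigl(\beta_j(u_j), \beta_k(v_k)\bigr) = \frac{p_{jk}}{p_j p_k}\bigl(C(\textbf{1}_{jk}(u_j, v_k)) - u_j v_k\bigr) \equiv 0, \qquad j \neq k,
\]
and the same factorisation yields $\text{cov}(\alpha(\textbf{1}_j(\cdot)), \beta_k(\cdot)) \equiv 0$ for $j \neq k$. Because $\sigma_{jk}$ (with $j < k$) is precisely the $\beta_j, \beta_k$ covariance integrated along the edge curves, while $\sigma_{jk}^{(2)}$ (with $j \neq k$, covering both $j<k$ and $k<j$) is the analogous $\alpha, \beta_k$ integral, both vanish identically.

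The second is an \emph{explicit integration} along the diagonal. Under independence $C(\textbf{u}(x)) = x$ and $\dot C_k(\textbf{u}(x)) = x^{1-w_k}$, hence
\[
S_C(\textbf{u}(x)) = \alpha(\textbf{u}(x)) - \sum_{k=1}^d x^{1-w_k}\beta_k(x^{w_k}).
\]
Expanding the variance of $\int_0^1 S_C(\textbf{u}(x))\,dx$, only the diagonal $k=\ell$ pieces of the $\beta$-$\beta$ and mixed $\alpha$-$\beta$ sums survive, thanks to the vanishing above. The computation reduces to the two elementary integrals
\[
\int_0^1\!\!\int_0^1 (x\wedge y - xy)\,dx\,dy = \tfrac{1}{12}, \qquad \int_0^1\!\!\int_0^1 (xy)^{1-w_j}(x\wedge y)^{w_j}\,dx\,dy = \tfrac{1}{4 - w_j},
\]
which reassemble to the claimed formula for $\sigma_{d+1}^2$. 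A fully analogous computation, this time mixing the edge curve $\textbf{1}_j(x^{w_j})$ against the diagonal $\textbf{u}(y)$, yields the closed form for $\sigma_j^{(1)}$, with the denominators $1+w_j$ and $3+w_j$ appearing after splitting the domain into $\{x \leq y\}$ and $\{x \geq y\}$ to handle the power of $x\wedge y$.

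The main obstacle is the bookkeeping in the variance decomposition: one must track precisely how the $p^{-1}$, $p_j^{-1}$ and $p_{jk}/(p_jp_k)$ weights attach to each surviving covariance integrand, and how they recombine once the off-diagonal pieces are annihilated. The algebraic zeroing of $\sigma_{jk}$ and $\sigma_{jk}^{(2)}$ for $j \neq k$ is the conceptually clean part; the closed forms for $\sigma_{d+1}^2$ and $\sigma_j^{(1)}$ require careful attention to the cancellations before the elementary integrals can be carried out.
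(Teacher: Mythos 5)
Your overall route is the same as the paper's: specialise the integral expressions obtained in the proof of Proposition \ref{Boulin} to $C(\mathbf{u})=\Pi_{j=1}^d u_j$, observe that every cross-index covariance integrand $C(\mathbf{1}_{jk}(u_j,v_k))-u_jv_k$ vanishes identically (which kills $\tau_{jk}$, $\sigma_{jk}$ and $\sigma_{jk}^{(2)}$ for $j\neq k$; note only that $\sigma_{jk}^{(2)}$ bundles both the $\alpha$--$\beta_k$ and the $\beta_j$--$\beta_k$ covariances, but both have this same integrand up to the factor $\dot C_k$, so your argument covers it), and evaluate the surviving terms by elementary integrals. Your two integrals $\int_0^1\int_0^1(x\wedge y-xy)\,dx\,dy=1/12$ and $\int_0^1\int_0^1(xy)^{1-w_j}(x\wedge y)^{w_j}\,dx\,dy=1/(4-w_j)$ are exactly the paper's $\gamma_1^2$, $\gamma_j^2$, $\gamma_{1j}$ computation and they do reassemble to the stated $\sigma_{d+1}^2(\mathbf{p},\mathbf{w})$.

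The gap is the $\sigma_j^{(1)}$ step, which you assert rather than carry out. With the definition used in the proof of Proposition \ref{Boulin}, $\sigma_j^{(1)}(\mathbf{w})=\int_{[0,1]^2}\big[C(u^{w_1},\dots,(u\wedge v)^{w_j},\dots,u^{w_d})-C(u^{w_1},\dots,u^{w_d})\,v^{w_j}\big]\,du\,dv$, independence gives the integrand $u^{1-w_j}(u\wedge v)^{w_j}-u\,v^{w_j}$, and the split into $\{v\le u\}$ and $\{v>u\}$ yields $\frac{1}{3(1+w_j)}+\frac{1}{6}-\frac{1}{2(1+w_j)}=\frac{w_j}{6(1+w_j)}$. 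No denominator $3+w_j$ arises from the computation you describe, and the value obtained does not coincide with the displayed formula of the corollary: as $w_j\to 0$ the covariance must vanish (since $\alpha(\mathbf{1}_j(x^{w_j}))-\beta_j(x^{w_j})\to 0$), consistent with $w_j/(6(1+w_j))\to 0$, whereas the displayed expression tends to $-1/3$; at $w_j=1$ the direct evaluation gives $1/12$ versus $-1/24$. So the sentence ``a fully analogous computation yields the closed form for $\sigma_j^{(1)}$, with the denominators $1+w_j$ and $3+w_j$ appearing after splitting the domain'' is exactly the step that would fail: you must either exhibit a computation that actually produces the $3+w_j$ term (your integrals do not), or record that the elementary evaluation gives $w_j/(6(1+w_j))$ and flag the resulting conflict with the displayed statement. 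As it stands, the $\sigma_{d+1}^2$ and vanishing parts are correct and match the paper, but the $\sigma_j^{(1)}$ part of your proposal is unsubstantiated.
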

            \begin{remark}
                From our knowledge, only \cite{GuiNa} gave an explicit value of the variance for the madogram of a bivariate random vector considering the independent copula. The result stated in Corollary \ref{coro_independent} is not an extension of this result because the hypothesis $\emph{\textbf{w}} \in \Delta^{d-1}$ is crucial. Nevertheless, the same techniques used to prove Proposition \ref{Boulin} can be applied to show a similar explicit formula of the asymptotic variance for an extension of the madogram in \cite{GuiNa} for  $d \geq 2$.
            \end{remark}
            Weak consistency of our estimators directly comes down from Proposition \ref{Boulin}. We are nonetheless able to state the strong consistency only under  Assumption \ref{Cond_2}.
            \begin{proposition}[\textbf{Strong consistency}]
                \label{strong_consistency}
                Let $(\textbf{I}_i, \tilde{\textbf{X}_i})_{i=1}^n$ an i.i.d sample given by Equation (\ref{missing_2}). Under Assumption \ref{Cond_2} for a fixed $\emph{\textbf{w}} \in \Delta^{d-1}$, it holds that 
                \begin{equation*}
                    \hat{\nu}_n^\mathcal{H}(\emph{\textbf{w}}) \overunderset{a.s.}{n \rightarrow \infty}{\rightarrow} \nu(\emph{\textbf{w}}), \quad \hat{\nu}_n^\mathcal{H*}(\emph{\textbf{w}}) \overunderset{a.s.}{n \rightarrow \infty}{\rightarrow} \nu(\emph{\textbf{w}}).
                \end{equation*}
            \end{proposition}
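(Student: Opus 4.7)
The plan is to represent $\hat{\nu}_n^\mathcal{H}(\mathbf{w})$ as a ratio of empirical means of bounded i.i.d.~variables, apply the strong law of large numbers (SLLN) separately to numerator and denominator, and bridge the gap between the observable scaled ranks $\widetilde{U}_{i,j}$ and the unobservable $U_{i,j}=F_j(X_{i,j})$ via the Glivenko-Cantelli theorem applied to each marginal empirical distribution. Throughout I assume the implicit positivity conditions $p>0$ and $p_j>0$ for each $j$.

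Setting $g(\mathbf{u})=\bigvee_{j=1}^d u_j^{1/w_j}-d^{-1}\sum_{j=1}^d u_j^{1/w_j}$, a bounded function on $[0,1]^d$, the estimator reads
\begin{equation*}
\hat{\nu}_n^\mathcal{H}(\mathbf{w}) \;=\; \frac{n^{-1}\sum_{i=1}^n g(\widetilde{\mathbf{U}}_i)\prod_{j=1}^d I_{i,j}}{n^{-1}\sum_{i=1}^n \prod_{j=1}^d I_{i,j}}.
\end{equation*}
Under Assumption \ref{Cond_2}, $\mathbf{I}_i$ is independent of $\mathbf{X}_i$, so the pairs $(\mathbf{I}_i,\mathbf{U}_i)$ with $\mathbf{U}_i=(F_1(X_{i,1}),\ldots,F_d(X_{i,d}))$ are i.i.d.~and bounded. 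The SLLN gives a.s.~convergence of the denominator to $p$ and, using $\mathbb{E}[g(\mathbf{U}_i)\prod_j I_{i,j}]=p\,\nu(\mathbf{w})$ (immediate by independence), a.s.~convergence of the ``oracle'' numerator $n^{-1}\sum_i g(\mathbf{U}_i)\prod_j I_{i,j}$ to $p\,\nu(\mathbf{w})$.

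Next I replace $\widetilde{\mathbf{U}}_i$ by $\mathbf{U}_i$ in the numerator. On the event $\{I_{i,j}=1\}$ one has $\widetilde{U}_{i,j}=\tfrac{n_j}{n_j+1}\hat{F}_{n,j}(X_{i,j})$, hence
\begin{equation*}
\max_{1\le i\le n}\bigl|\widetilde{U}_{i,j}-U_{i,j}\bigr|\,\mathds{1}_{\{I_{i,j}=1\}} \;\le\; \sup_{x\in\mathbb{R}}\bigl|\hat{F}_{n,j}(x)-F_j(x)\bigr|+\frac{1}{n_j+1}.
\end{equation*}
By MCAR the observed $X_{i,j}$'s form an i.i.d.~sample from $F_j$, $n_j\to\infty$ a.s., so the right-hand side tends to $0$ a.s.~by the classical Glivenko-Cantelli theorem. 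Combined with uniform continuity of $g$ on the relevant compact sub-cube of $[0,1]^d$, this yields $n^{-1}\sum_i[g(\widetilde{\mathbf{U}}_i)-g(\mathbf{U}_i)]\prod_j I_{i,j}\to 0$ a.s., and Slutsky's lemma then gives $\hat{\nu}_n^\mathcal{H}(\mathbf{w})\to\nu(\mathbf{w})$ a.s. The same three-step scheme applied to each bracket of \eqref{corrected_lambda_FMado_hybrid}, together with $\mathbb{E}[U_{i,j}^{1/w_j}\prod_k I_{i,k}]=p\,w_j/(1+w_j)$, shows that each correction vanishes a.s.~in the limit, whence $\hat{\nu}_n^{\mathcal{H}*}(\mathbf{w})\to\nu(\mathbf{w})$ a.s.

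The main technical obstacle will be the boundary behaviour of $g$: the map $u\mapsto u^{1/w_j}$ degenerates at $u=1$ when $w_j=0$, so the passage from $\widetilde{\mathbf{U}}_i$ to $\mathbf{U}_i$ requires exploiting that $\widetilde{U}_{i,j}\le n_j/(n_j+1)<1$ for all $i,n$ and carefully controlling the rare indices for which $U_{i,j}$ is close to $1$. Everything else is a routine combination of SLLN and Glivenko-Cantelli made possible by the MCAR assumption.
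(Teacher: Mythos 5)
Your proposal is correct and follows essentially the same route as the paper: the paper also splits the error into an oracle term built from the true margins $F_j$ on the overlapping data (handled by the law of large numbers under \textbf{MCAR}, which is your SLLN-on-numerator-and-denominator step) plus a rank-replacement term bounded by $\sup_j\sup_i$ of the difference of the transformed margins, controlled by Glivenko--Cantelli and the uniform continuity of $x \mapsto x^{1/w_j}$ (the paper's Lemma \ref{lemma_2} playing the role of your uniform-continuity-of-$g$ argument). The only difference is cosmetic — ratio of means plus Slutsky versus the paper's direct triangle-inequality decomposition via $\nu_n(\textbf{w})$ — and your explicit flagging of the $w_j=0$ boundary case is, if anything, slightly more careful than the paper.
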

            For the rest of this section, we use our previous  results to state some properties of the Pickands estimator in the missing data framework. 
            
            It is a common knowledge that the $\textbf{w}$-madogram is of main interest to construct of the Pickands dependence function. Indeed, given Equation \eqref{w_mado_pickands}, one can define an estimator of the Pickands dependence function by estimating the $\textbf{w}$-madogram and using it as a plug-in estimator. Most interesting properties of the $\textbf{w}$-madogram such as strong consistency and the weak convergence are thus translated for the Pickands estimator using continuous mapping theorem and the Delta method. In the missing data framework we define the following estimator.
            \begin{definition}
                \label{def:pick_hat}
                Let $(\textbf{I}_i, \tilde{\textbf{X}}_i)_{i=1}^n$ be a samble given by \eqref{missing_2}, the hybrid nonparametric estimator of the Pickands dependence function is defined as
                \begin{equation}
                    \label{hybrid_pickands_estimator}
                    \hat{A}_n^{\mathcal{H}*}(\textbf{w}) = \frac{\hat{\nu}_n^{\mathcal{H}*}(\textbf{w}) + c(\textbf{w})}{1-\hat{\nu}_n^{\mathcal{H}*}(\textbf{w}) - c(\textbf{w})},
                \end{equation}
                where $\hat{\nu}_n^{\mathcal{H}*}(\textbf{w})$ defined in Equation \eqref{corrected_lambda_FMado_hybrid} and $c(\textbf{w}) = d^{-1} \sum_{j=1}^d w_j / (1+w_j)$.
            \end{definition}
            Using the results of \cite{MARCON20171} (namely, Theorem 2.4), Proposition \ref{Boulin} and Proposition \ref{strong_consistency} of this paper, we state the following corollary.
            \begin{corollary}
                \label{asymptotic_distribution_pickands}
                    Let $\emph{\textbf{p}} = (p_1, \dots, p_d, p)$ and $(\textbf{I}_i, \tilde{\textbf{X}_i})_{i=1}^n$ be a samble given by \eqref{missing_2}. For $\emph{\textbf{w}} \in \Delta^{d-1}$, if $C$ is an extreme value copula with Pickands dependence function and  under Assumption \ref{Cond_2}, it holds that 
                \begin{equation*}
                    \hat{A}_n^{\mathcal{H}*}(\emph{\textbf{w}}) \overunderset{a.s.}{n \rightarrow \infty}{\rightarrow} A(\textbf{w}).
                \end{equation*}
                Furthermore, if $C$ additionally verifies Assumptions  \ref{Cond_1}.1 and \ref{Cond_1}.2, we obtain
                \begin{equation*}
                    \sqrt{n} \left(\hat{A}_n^{\mathcal{H}*}(\emph{\textbf{w}}) - A( \emph{\textbf{w}})\right) \overunderset{d}{n \rightarrow \infty}{\rightarrow} \mathcal{N}\left(0, \mathcal{V}(\emph{\textbf{p}}, \emph{\textbf{w}})\right),
                \end{equation*}
                where  the closed formula of the asymptoptic variance is given by $
                    \mathcal{V}(\emph{\textbf{p}}, \emph{\textbf{w}}) = (1+A(\emph{\textbf{w}}))^4 \mathcal{S}^{\mathcal{H}*}(\emph{\textbf{p}}, \emph{\textbf{w}}),$ 
                with $\mathcal{S}^{\mathcal{H}*}(\emph{\textbf{p}}, \emph{\textbf{w}})$ as in  Proposition \ref{Boulin}. 
            \end{corollary}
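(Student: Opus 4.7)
The plan is to view the Pickands estimator $\hat{A}_n^{\mathcal{H}*}(\mathbf{w})$ as the image of the corrected madogram $\hat{\nu}_n^{\mathcal{H}*}(\mathbf{w})$ through the smooth scalar map
\begin{equation*}
g : x \longmapsto \frac{x + c(\mathbf{w})}{1 - x - c(\mathbf{w})},
\end{equation*}
so that $\hat{A}_n^{\mathcal{H}*}(\mathbf{w}) = g\bigl(\hat{\nu}_n^{\mathcal{H}*}(\mathbf{w})\bigr)$ and, by \eqref{w_mado_pickands}, $A(\mathbf{w}) = g(\nu(\mathbf{w}))$. Everything then follows from Proposition \ref{strong_consistency} and Proposition \ref{Boulin} by standard transfer arguments, once one checks that $g$ is well-defined and differentiable in a neighbourhood of $\nu(\mathbf{w})$.

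For the almost sure convergence, I would first argue that $1 - \nu(\mathbf{w}) - c(\mathbf{w}) > 0$: inverting \eqref{w_mado_pickands} gives $1-\nu(\mathbf{w})-c(\mathbf{w}) = 1/(1+A(\mathbf{w}))$, which is bounded below by $1/2$ since $A(\mathbf{w}) \leq 1$. Hence $g$ is continuous at $\nu(\mathbf{w})$, and the continuous mapping theorem applied to the strong consistency statement $\hat{\nu}_n^{\mathcal{H}*}(\mathbf{w}) \to \nu(\mathbf{w})$ a.s.\ from Proposition \ref{strong_consistency} yields $\hat{A}_n^{\mathcal{H}*}(\mathbf{w}) \to A(\mathbf{w})$ a.s.

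For the asymptotic normality, I would apply the one-dimensional delta method to $g$ at $\nu(\mathbf{w})$. A direct computation gives
\begin{equation*}
g'(x) = \frac{1}{(1 - x - c(\mathbf{w}))^2},
\end{equation*}
and evaluating at $x = \nu(\mathbf{w})$ using the identity $1 - \nu(\mathbf{w}) - c(\mathbf{w}) = 1/(1 + A(\mathbf{w}))$ derived above yields $g'(\nu(\mathbf{w})) = (1 + A(\mathbf{w}))^2$. Combining this with the CLT $\sqrt{n}(\hat{\nu}_n^{\mathcal{H}*}(\mathbf{w}) - \nu(\mathbf{w})) \rightsquigarrow \mathcal{N}(0, \mathcal{S}^{\mathcal{H}*}(\mathbf{p}, \mathbf{w}))$ from Proposition \ref{Boulin}, the delta method gives
\begin{equation*}
\sqrt{n}\bigl(\hat{A}_n^{\mathcal{H}*}(\mathbf{w}) - A(\mathbf{w})\bigr) \xrightarrow[n\to\infty]{d} \mathcal{N}\bigl(0,\, (1+A(\mathbf{w}))^{4}\,\mathcal{S}^{\mathcal{H}*}(\mathbf{p}, \mathbf{w})\bigr),
\end{equation*}
which is exactly $\mathcal{V}(\mathbf{p}, \mathbf{w})$.

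There is essentially no deep obstacle here: the whole statement is a direct transport of Propositions \ref{strong_consistency} and \ref{Boulin} through the smooth bijection $g$. The only point that requires any care is the bookkeeping identity $1 - \nu(\mathbf{w}) - c(\mathbf{w}) = 1/(1+A(\mathbf{w}))$ ensuring that $g$ is smooth at $\nu(\mathbf{w})$ and producing the factor $(1+A(\mathbf{w}))^4$ in the variance; this is simply an algebraic rearrangement of \eqref{w_mado_pickands}.
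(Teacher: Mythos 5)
Your proof is correct and follows essentially the same route as the paper: the plug-in map from \eqref{w_mado_pickands} together with the continuous mapping theorem for the almost sure part and the delta method for the normality, the derivative $(1+A(\mathbf{w}))^2$ at $\nu(\mathbf{w})$ yielding the factor $(1+A(\mathbf{w}))^4$ in the variance. The only inessential difference is that you apply the classical one-dimensional delta method at a fixed $\mathbf{w}$ directly to Proposition \ref{Boulin}, whereas the paper first uses the functional delta method on the process of Theorem \ref{weak_conv_hybrid_mado} and then specializes to a fixed $\mathbf{w}$.
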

            
    \section{Numerical results}
            \label{numerical_results}
        In this section we verify our findings concerning the closed formula of the asymptotic variances through a simulation study. To do so, we compare empirical counterparts of the asymptotic variances computed out with Monte Carlo simulations with the explicit asymptotic variances given by Proposition \ref{Boulin}. Our simulation studies are implemented using \texttt{Python} programming language and all the codes are available online in this \href{https://github.com/Aleboul/missing}{github} repository.
        
        \subsection{Presentation of the models}
        
        We present here the six models (\ref{sym_log_model} to \ref{mod:Student}) used for this simulation study. The  $d$-dimensional Gumbel and the asymmetric logistic models are considered in models \ref{sym_log_model} and \ref{asy_log_model} below, the remaining ones (models \ref{asy_neg_log_model} to \ref{mod:Student}) concern only the bivariate case.
        \begin{enumerate}[label = \textbf{M\arabic*}, leftmargin=*]
            \item \textbf{The symmetric logistic, or Gumbel model} \cite{gumbel1960} is defined by the   following Pickands dependence function
            \begin{equation*}
                A(w_1,\dots,w_d) = \left( \sum_{j=1}^d w_j^\theta\right)^{1/\theta},
            \end{equation*}
            with $\theta \in [1, \infty)$.  We retrieve the independent case when $\theta = 1$ and the dependence between the variables is stronger as $\theta$ goes to infinity. The restriction to $d = 2$ is immediate from the definition. \label{sym_log_model}
            
            \item Let $B$ be the set of all nonempty subsets of $\{1,\dots,d\}$ and $B_1 = \{ b \in B, |b| = 1\}$, where $|b|$ denotes the number of elements in the set $b$. \textbf{The asymmetric logistic model} in  \cite{tawn1990} is defined by the following Pickands dependence function
            \begin{equation*}
                A(w_1, \dots, w_d) = \sum_{b \in B} \left( \sum_{j \in b} (\theta_{j,b} w_j)^{\theta_b}\right)^{1/\theta_b},
            \end{equation*}
            where $\theta_b \in [1,\infty)$ for all $b \in B \setminus B_1$, and the asymmetry parameters $\theta_{j,b} \in [0,1]$ for all $b \in B$ and $j \in b$. The model should verify the following constrains $\sum_{b \in B(j)} \theta_{j,b} = 1$ for $j \in \{1,\dots,d\}$ where $B_{(j)} = \{b \in B, j \in b\}$ and if $\theta_b = 1$ for every $b \in B \setminus B_1$, then $\theta_{j,b} = 0$ $\forall j \in b$. The model contains $2^d - d -1$ dependence parameters and $d(2^{d-1} -1)$ asymmetry parameters. In case of $d = 2$, we go back to the asymmetric logistic model in  \cite{10.1093/biomet/75.3.397}, namely
            \begin{equation*}
                A(w) = (1-\psi_1)w + (1-\psi_2)(1-w) + \left[ (\psi_1 w)^\theta + (\psi_2(1-w))^\theta \right]^{1/\theta},
            \end{equation*}
            with $\theta \in [1,\infty)$, $\psi_1, \psi_2 \in [0,1]$. For $d=3$, the Pickands dependence function is expressed as 
            \begin{align*}
                A(\textbf{w}) =& \alpha_1 w_1 + \psi_1 w_2 + \phi_1 w_3 + \left( (\alpha_2 w_1)^{\theta_1} + (\psi_2w_2)^{\theta_1} \right)^{1/\theta_1} + \left( (\alpha_3 w_2)^{\theta_2} + (\phi_2w_3)^{\theta_2} \right)^{1/\theta_2} \\ &+ \left( (\psi_3 w_2)^{\theta_3} + (\phi_3w_3)^{\theta_3} \right)^{1/\theta_3} 
                              + \left( (\alpha_4 w_1)^{\theta_4} + (\psi_4 w_2)^{\theta_4} + (\phi_4 w_3)^{\theta_4} \right)^{1/\theta_4},
            \end{align*}
            where $\boldsymbol{\alpha} = (\alpha_1, \dots, \alpha_4), \boldsymbol{\psi} = (\psi_1, \dots, \psi_4), \boldsymbol{\phi} = (\phi_1, \dots, \phi_4)$ are all elements of $\Delta^3$.\label{asy_log_model}
            
            \item \textbf{The asymmetric negative logistic model} in  \cite{Joe1990FamiliesOM} is  defined via 
            \begin{equation*}
               A(w) = 1- \left[ (\psi_1(1-w))^{-\theta} + (\psi_2 w)^{-\theta}\right]^{-1/\theta},
            \end{equation*}
            with parameters $\theta \in (0,\infty)$, $\psi_1, \psi_2 \in (0,1]$. The special case $\psi_1 = \psi_2 = 1$ returns the Galambos model \cite{Oliveira1977TheAT}. \label{asy_neg_log_model}
            
            \item \textbf{The asymmetric mixed model} in  \cite{10.1093/biomet/75.3.397} corresponds to
            \begin{equation*}
                A(w) = 1 - (\theta + \kappa)w + \theta w^2 + \kappa w^3,
            \end{equation*}
            with parameters $\theta$ and $\kappa$ satisfying $\theta \geq 0$, $\theta + 3\kappa \geq 0$, $\theta + \kappa \leq 1$, $\theta + 2\kappa \leq 1$. The special case $\kappa = 0$ and $\theta \in [0,1]$ yields the symmetric mixed model. In the symmetric mixed model, when $\theta = 0$, we recover the independent copula. \label{mixed_model}
            \item \textbf{The model of Hüsler and Reiss} in \cite{HUSLER1989283} is given by the Pickands dependence function  
            \begin{equation*}
                A(t) = (1-t) \Phi\left( \theta + \frac{1}{2\theta} log\left(\frac{1-t}{t}\right)\right) + t\Phi\left( \theta + \frac{1}{2\theta} log\left( \frac{t}{1-t} \right)\right),
            \end{equation*}
            where $\theta \in (0, \infty)$ and $\Phi$ is the standard normal distribution function. As $\theta$ goes to $0^+$, the dependence between the two variables increases. When $\theta$ goes to infinity, we are in case of near independence. \label{mod:Husler_Reiss}
            
            \item \textbf{The Student $t$-EV model} in \cite{Demarta_Mcneil} is given by 
            \begin{align*}
                &A(w) = wt_{\nu + 1}(z_w) + (1-w)t_{\nu+1}(z_{1-w}), \\
                &\textrm{with }  z_w = (1+\nu)^{1/2}[\{w/(1-w)\}^{1/\nu} - \theta](1-\theta^2)^{-1/2},
            \end{align*}
            and parameters $\nu > 0$, and $\theta \in (-1,1)$, where $t_{\nu+1}$ is the distribution function of a Student-$t$ random variable with $\nu+1$ degrees of freedom. \label{mod:Student}
        \end{enumerate}
        \subsection{Description of numerical experiments}
        For each numerical experiment, the endpoint-corrected $\textbf{w}$-madogram estimator in \eqref{corrected_lambda_FMado_hybrid} is computed using $\lambda_j(\textbf{w}) = w_j$. The study consists in three different experiments (\textbf{E1}, \textbf{E2} and \textbf{E3}). For all experiments, the empirical counterpart of the asymptotic variance given by Proposition \ref{Boulin} is computed out through a given grid of the simplex $\Delta^{d-1}$. For a given element $\textbf{w}$ of this grid, $n_{iter} \in \mathbb{N} \setminus \{0\}$ random samples of size $n$ are generated from the models \ref{sym_log_model} to \ref{mod:Student} given above. By using these samples we estimate the  associated $\textbf{w}$-madogram. We thus compute the empirical variance of the normalized estimation error namely,
        \begin{equation}
        \label{empirical_variance}
            \mathcal{E}_n^{\mathcal{H}}(\textbf{w}) \triangleq \widehat{Var}\left( \sqrt{n}\left(\boldsymbol{\hat{\nu}_n^{\mathcal{H}}}(\textbf{w}) - \nu(\textbf{w})\right)\right), \quad \mathcal{E}_n^{\mathcal{H}*}(\textbf{w}) \triangleq \widehat{Var}\left( \sqrt{n}\left(\boldsymbol{\hat{\nu}_n^{\mathcal{H}*}}(\textbf{w}) - \nu(\textbf{w})\right)\right),
        \end{equation}
        where $\boldsymbol{\hat{\nu}_n^{\mathcal{H}}}$ and $\boldsymbol{\hat{\nu}_n^{\mathcal{H}*}}$ are the vectors composed out of the $n_{iter}$ hybrid and corrected estimators (see Equations \eqref{hybrid_lambda_fmado} and \eqref{corrected_lambda_FMado_hybrid}) of the $\textbf{w}$-madogram, respectively. We also define the Mean Integrated Squared Error (MISE) between $\mathcal{E}_n^{\mathcal{H}}$ and $\mathcal{S}^{\mathcal{H}}$ the asymptotic variance computed in Proposition \ref{Boulin} (\emph{resp.} between $\mathcal{E}_n^{\mathcal{H}*}$ and $\mathcal{S}^{\mathcal{H}*}$), that is
        \begin{equation}
            \label{eq:mise}
                MISE^{\mathcal{H}} \triangleq \mathbb{E}\left[ \int_{\Delta^{d-1}} \left( \mathcal{E}_n^{\mathcal{H}}(\textbf{w}) - \mathcal{S}^{\mathcal{H}}(\textbf{p},\textbf{w}) \right)^2 d\textbf{w} \right], \quad\quad  MISE^{\mathcal{H}*} \triangleq \mathbb{E}\left[ \int_{\Delta^{d-1}} \left( \mathcal{E}_n^{\mathcal{H}*}(\textbf{w}) - \mathcal{S}^{\mathcal{H}*}(\textbf{p},\textbf{w}) \right)^2 d\textbf{w} \right].
        \end{equation}
        \begin{itemize}
            \item[\textbf{E1}] We set $d = 2$. A Monte Carlo study is implemented here to illustrate Proposition \ref{Boulin} in finite-sample setting with missing data. We consider \ref{asy_log_model}, \ref{asy_neg_log_model}, \ref{mixed_model}, \ref{mod:Husler_Reiss} and \ref{mod:Student} where we fix $n_{iter} = 300$ and $n = 1024$. The chosen grid is $\{1/200, \dots, 199/200 \}$ and we take $p_1 = p_2 = 0.75$. We estimate $MISE^{\mathcal{H}}$ in \eqref{eq:mise} by
            \begin{equation*}
                \widehat{MISE}^{\mathcal{H}}_n = \frac{1}{10} \sum_{l=1}^{10}\frac{1}{199}\sum_{k=1}^{199} \left( \mathcal{E}_{n,l}^{\mathcal{H}}\left(\frac{k}{200}\right) - \mathcal{S}^{\mathcal{H}}\left(\textbf{p},\frac{k}{200}\right) \right)^2,
            \end{equation*}
            with $\mathcal{E}_{n,l}^{\mathcal{H}}, l \in \{1,\dots,10\}$ is the empirical counterpart of $\mathcal{S}^{\mathcal{H}}$ taking the empirical variance of $30$ estimators $\hat{\nu}_n^{\mathcal{H}}(\textbf{w})$ where $\textbf{w} = (k /200, 1- k/200)$ and $k \in \{1,\dots,199\}$. Each estimator of the $\textbf{w}$-madogram is computed out through a random sample with $n = 1024$. By using the second equation in  \eqref{eq:mise}, the estimator $\widehat{MISE}_n^{\mathcal{H}*}$ is defined similarly.
            \item[\textbf{E2}] We fix $d = 3$ and we consider \ref{sym_log_model} and \ref{asy_log_model} with $n_{iter} = 100$ and $n = 512$. We set the dependence parameter as $\theta = 1$ and $\theta = 2$ for the first model. For the second one we take $\boldsymbol{\alpha} = (0.4,0.3,0.1,0.2)$, $\boldsymbol{\psi} = (0.1, 0.2, 0.4, 0.3)$, $\boldsymbol{\phi} = (0.6,0.1,0.1,0.2)$ and $\boldsymbol{\theta} = (\theta_1, \dots, \theta_4) = (0.6,0.5,0.8,0.3)$ as the dependence parameter. We take $p_1 = p_2 = p_3 = 0.9$ and thus $p = 0.729$, $p_{ij} = 0.81$ with $i,j \in \{1,2,3\}$ and $i < j$. We grid the $[0,1]^2$ cube into $10000$ points at same distance from each other and we only keep those with $w_2 + w_3 < 1.0$ where $w_2$ and $w_3$ are in the grid of the cube, we set $w_1 = 1-w_2-w_3$. Let $\Delta_n^{d-1}$ be $199$ points uniformly sampled from $\Delta^2$ and $n_{iter} = 300$, Equation \eqref{eq:mise} is estimated with
            \begin{equation*}
            \widehat{MISE}^{\mathcal{H}}_n = \frac{1}{10}\sum_{l=1}^{10}\frac{1}{199}\sum_{k \in \Delta_n^{d-1}} \left( \mathcal{E}_{n,l}^{\mathcal{H}}\left(k\right) - \mathcal{S}^{\mathcal{H}}\left(\textbf{p},k\right) \right)^2,
            \end{equation*}
            where $\mathcal{E}_{n,l}^{\mathcal{H}}, l \in \{1,\dots,10\}$ is the empirical counterpart of $\mathcal{S}^{\mathcal{H}}$ taking the empirical variance of $30$ estimators $\hat{\nu}_n^{\mathcal{H}}(\textbf{w})$ with $\textbf{w} \in \Delta_n^{d-1}$. Each estimator of the $\textbf{w}$-madogram is computed out through a random sample with $n = 512$. Again, $\widehat{MISE}^{\mathcal{H}*}_n$ is defined in a similar way.
            \item[\textbf{E3}] In this experiment, we aim to show that our conclusions are verified in a high dimension setting. We compute empirical counterpart of the asymptotic variance for a varying dimension $d$ and we compare its value to the theoretical one given by Proposition \ref{Boulin}. Furthermore, as the probability of observing a complete row decrease quickly with respect to the dimension $d$, \emph{i.e.} $p = p_1^{-d}$, we set that there is no missing data. We consider the symmetric logistic model with dependence parameter $\theta = 2$. We sample $300$ points from the unit simplex $\Delta^{d-1}$ and we compute the following quantity
        \begin{equation}
            \label{rapport}
            \delta_n^{\mathcal{H}}(\textbf{w}) \triangleq \frac{\left|\mathcal{E}^{\mathcal{H}}_{n}(\textbf{w}) - \mathcal{S}^{\mathcal{H}}(\textbf{1}, \textbf{w})\right|}{\mathcal{S}^{\mathcal{H}}(\textbf{1}, \textbf{w})},
        \end{equation}
        where $\mathcal{E}^{\mathcal{H}}_n$ is computed from $n_{iter} = 100$ estimators of the $\textbf{w}$-madogram with sample size $n \in \{216,512,1024\}$. The results are collected for several values of $d \in \{5,10,\dots,40\}$.
        \end{itemize} 
        
        Note that for Experiments \textbf{E1} and \textbf{E2}, the missing mechanism is such as $I_1, \dots, I_d$ are pairwise independent and $p_j = p_1, \forall j\in \{1,\dots,d\}$. The independence setup corresponds to the worst scenario where the missingness of one variable does not influence the missingness of the other variables. \emph{A contrario}, if we suppose that $I_1, \dots, I_d$ are strongly dependent, \emph{i.e.} none or all entries are missing, we then estimate a statistic on a sample of average length $p \times n$ and we are turning back to inference in a complete data framework with a reduced sample size. This is also readily seen from the closed formula in Proposition \ref{Boulin}, indeed in a strongly dependent setting  we have $p = p_1$, so the asymptotic variance is reduced to the complete data framework up to a multiplicative factor.  
        
        \subsection{Results of experiments}
        Results of Experiment \textbf{E1} are depicted in Figure \ref{fig:missing_estimation}. For all panels, empirical counterparts given by Equation \eqref{empirical_variance} (points) fit the theoretical values exhibited from Proposition \ref{Boulin} (solid lines). For the hybrid estimator, as discussed in Remark \ref{remark_corr}, both empirical and theoretical values of the asymptotic variance are different from zero for each $w \in \{\{0\},\{1\}\}$. The corrected version provides  this feature and also modifies the shape of the curve (see Remark \ref{rem:remark_2}). Indeed  the asymptotic behavior of the hybrid and the corrected estimators are different in the missing data framework. Notice that, in terms of variance, we do not have a strict dominance from one estimator to another. 
        
        \begin{figure}[!htp]
                \begin{subfigure}{.33\textwidth}
                  \centering
                  \includegraphics[width=.9\linewidth,height=5.5cm]{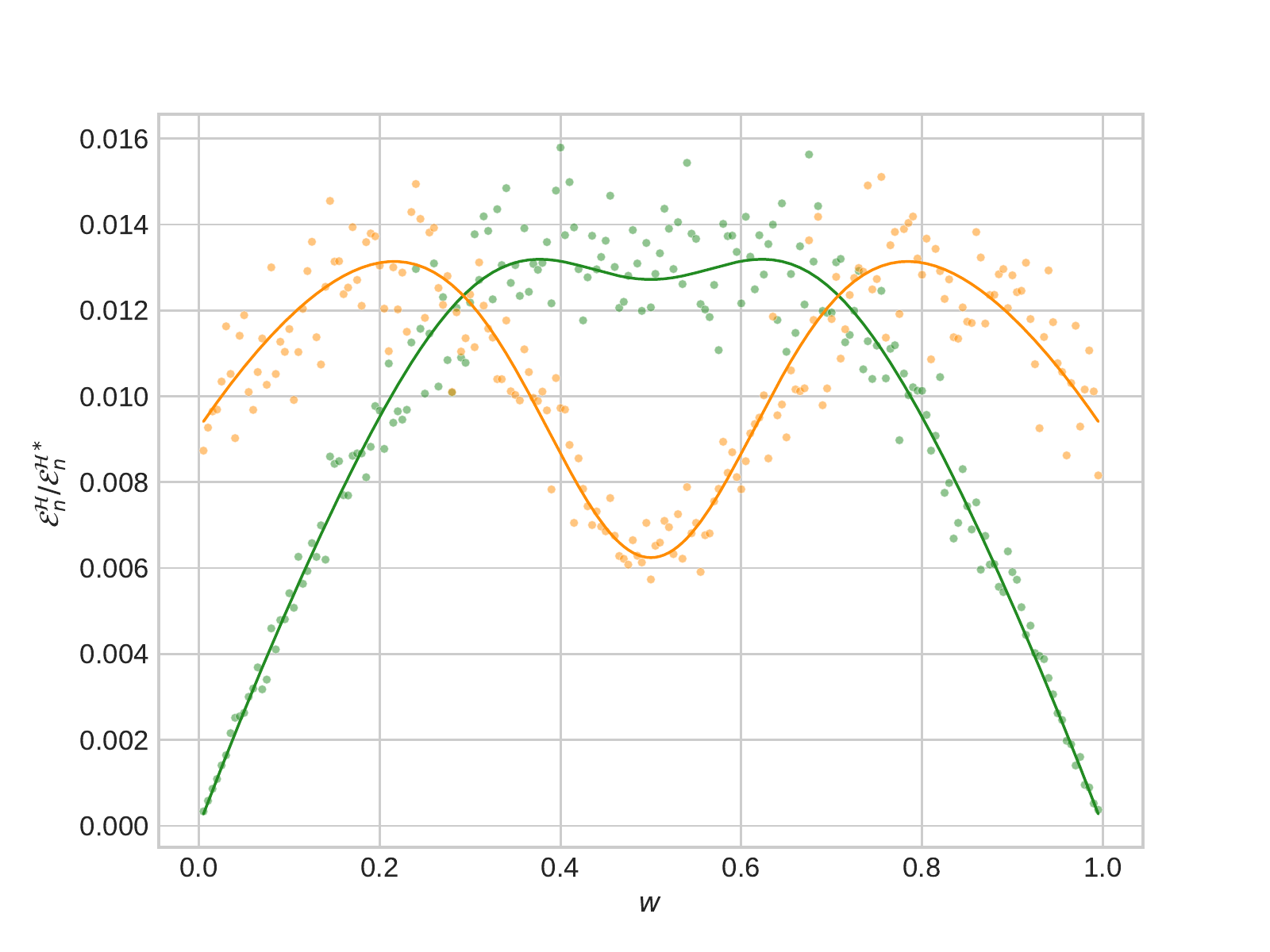}
                  \caption{\textbf{GAL (\ref{asy_neg_log_model}, $\theta = 2.5$})}
                  \label{fig:sfig1_chap2}
                \end{subfigure}%
                \begin{subfigure}{.33\textwidth}
                  \centering
                  \includegraphics[width=.9\linewidth,height=5.5cm]{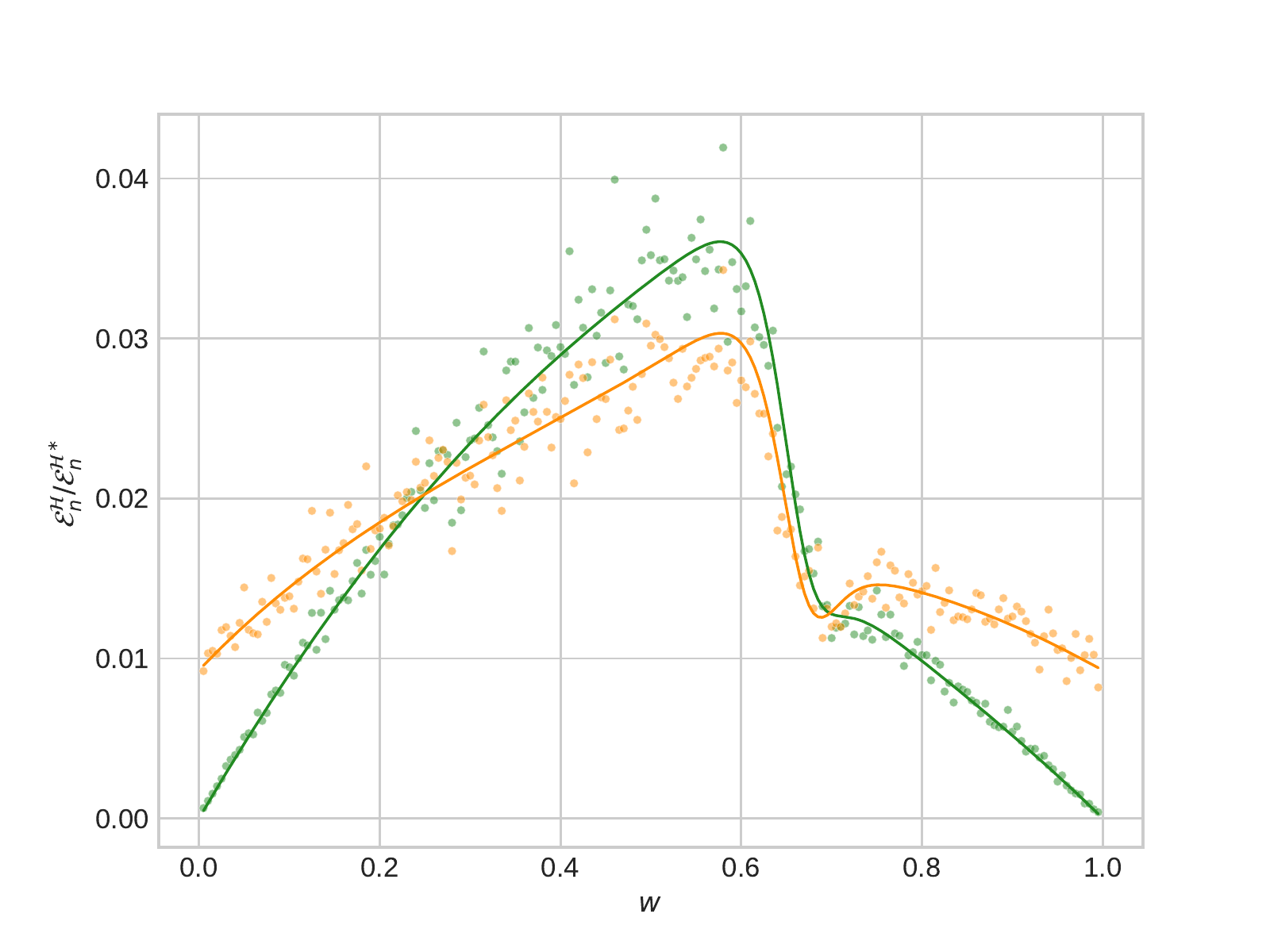}
                  \caption{\textbf{ANL (\ref{asy_neg_log_model}, $\theta = 10$, $\psi_1 = .5$, $\psi_2 = 1$)}}
                  \label{fig:sfig2_chap2}
                \end{subfigure}
                \begin{subfigure}{.33\textwidth}
                  \centering
                  \includegraphics[width=.9\linewidth,height=5.5cm]{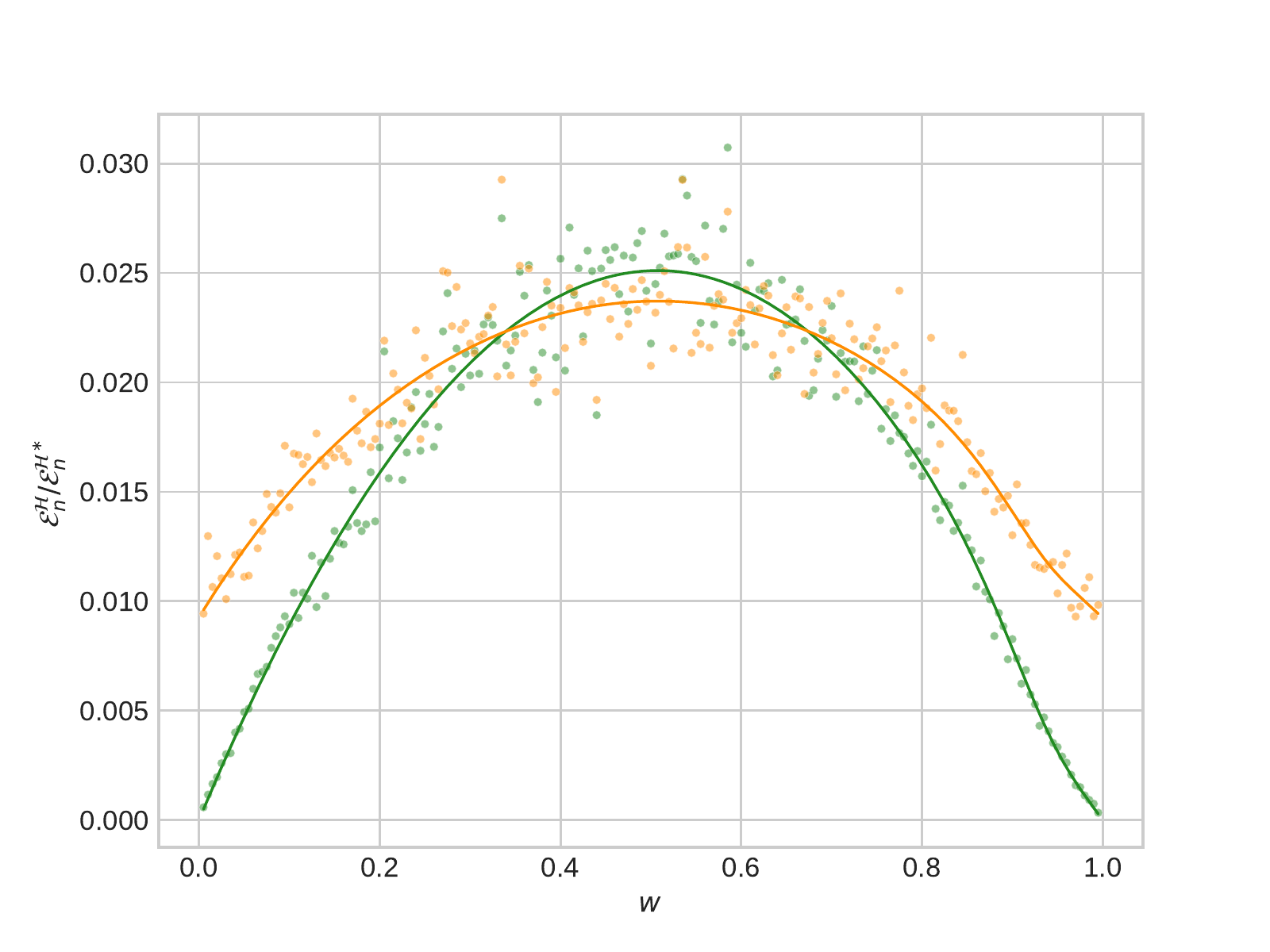}
                  \caption{\textbf{ASL (\ref{asy_log_model}, $\theta = \frac{5}{2}$, $\psi_1 = .1$, $\psi_2 = 1$)}}
                  \label{fig:sfig3_chap2}
                \end{subfigure}%
                \hspace{\fill}
                \begin{subfigure}{.33\textwidth}
                  \centering
                  \includegraphics[width=.9\linewidth,height=5.5cm]{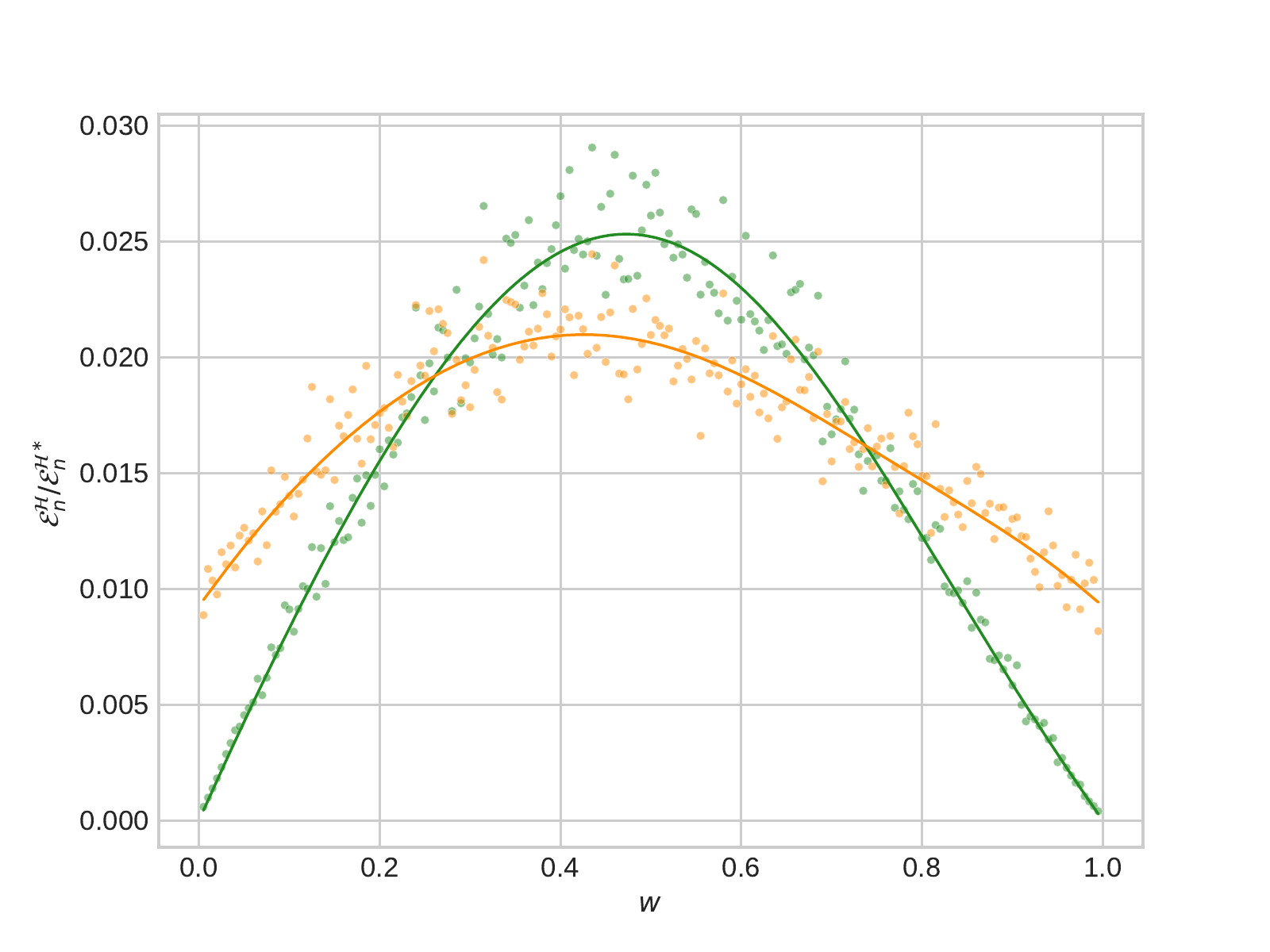}
                  \caption{\textbf{ASM (\ref{mixed_model}, $\theta = \frac{4}{3}$, $\kappa = -\frac{1}{3}$)}}
                  \label{fig:sfig4_chap2}
                \end{subfigure}
                \begin{subfigure}{.33\textwidth}
                  \centering
                  \includegraphics[width=.9\linewidth,height=5.5cm]{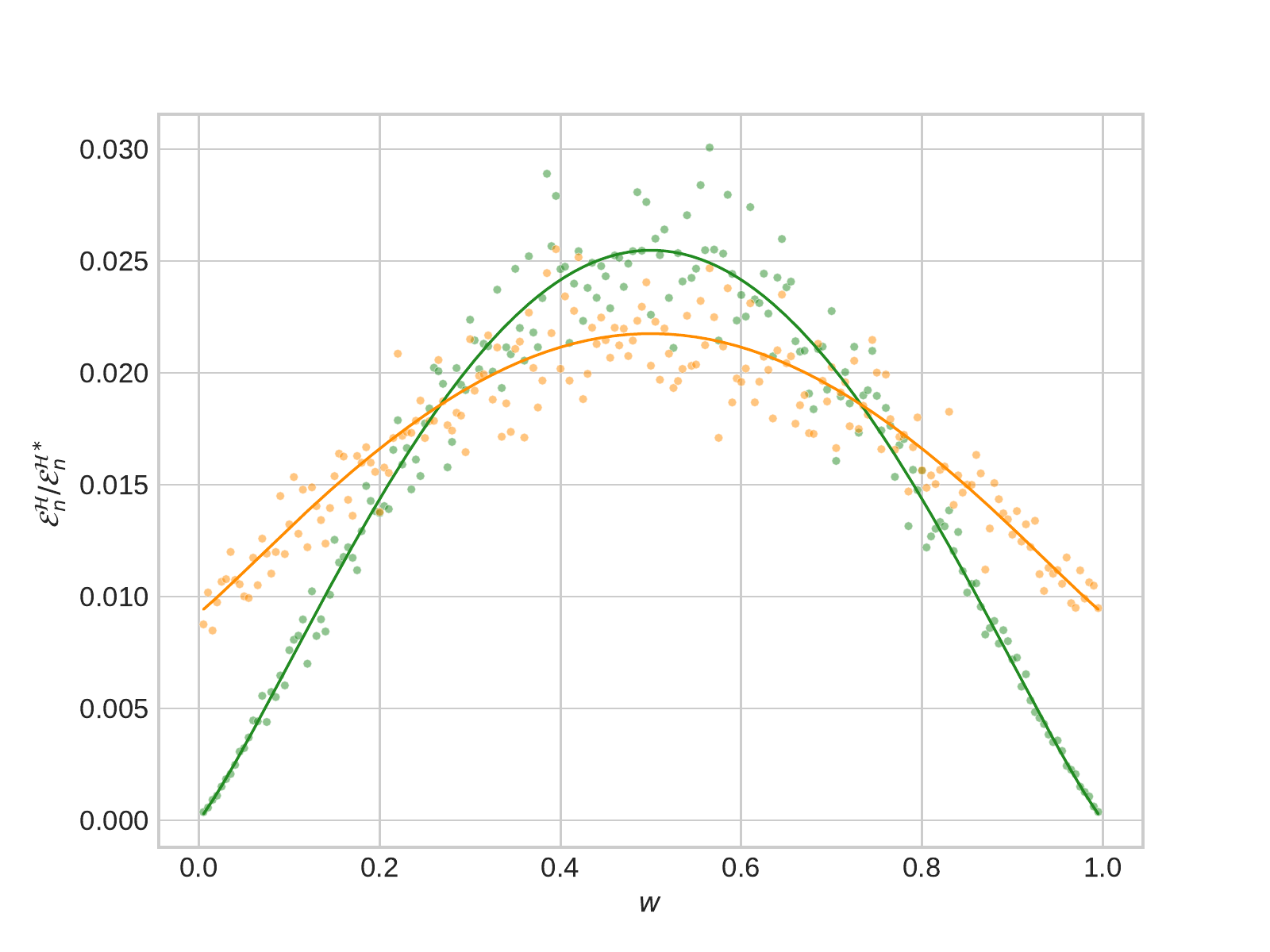}
                  \caption{\textbf{HR (\ref{mod:Husler_Reiss}, $\theta = 1.0$)}}
                  \label{fig:sfig5_chap2}
                \end{subfigure}
                \begin{subfigure}{.33\textwidth}
                  \centering
                  \includegraphics[width=.9\linewidth,height=5.5cm]{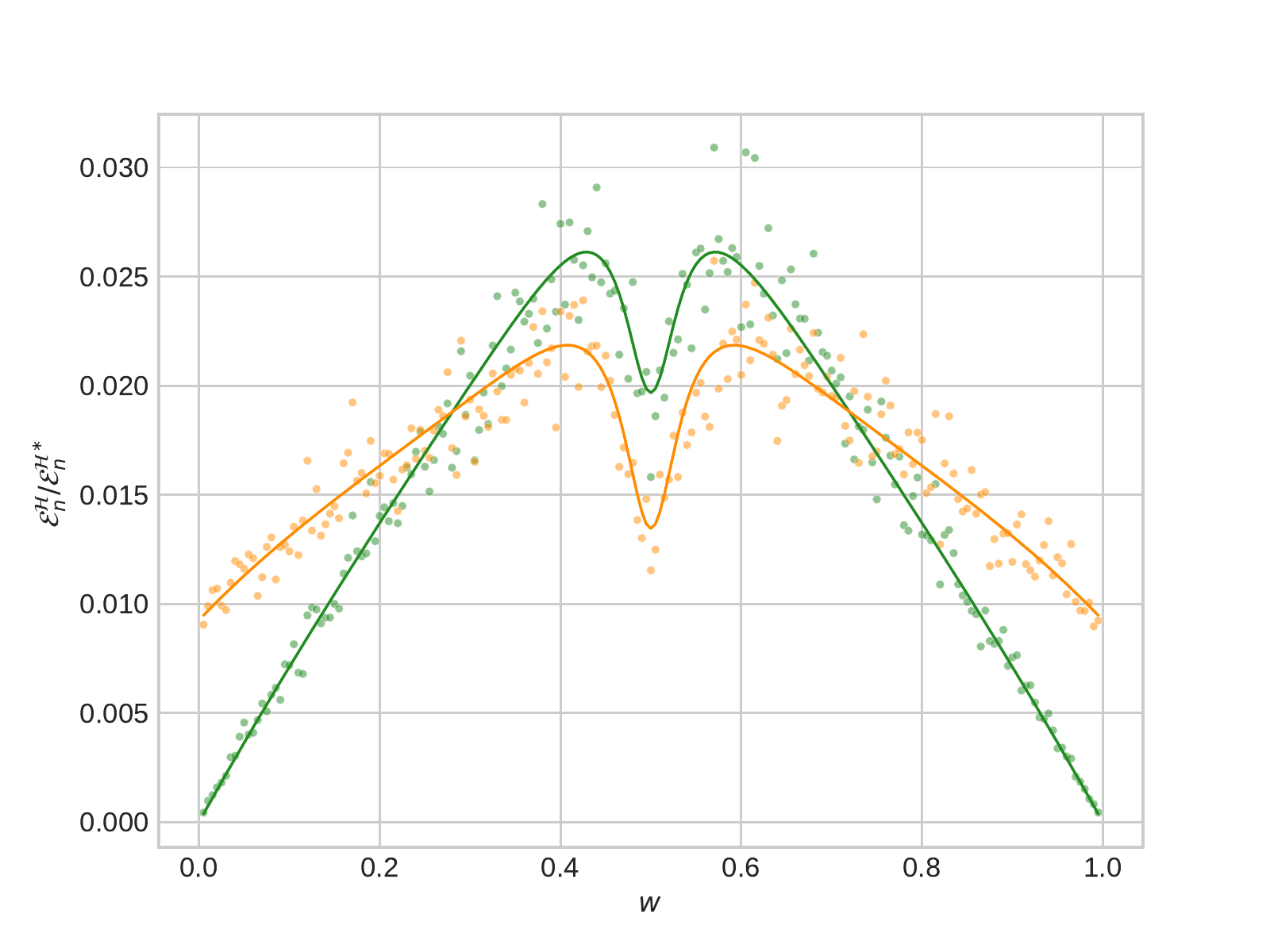}
                  \caption{\textbf{tEV (\ref{mod:Student}, $\theta = 0.8$, $\nu = 0.2$)}}
                  \label{fig:sfig6_chap2}
                \end{subfigure}
                \caption{$\mathcal{E}_n^\mathcal{H}$ in red and $\mathcal{E}_n^\mathcal{H*}$  in green  (see \eqref{empirical_variance})  as a function of $w$, of the asymptotic variances of the estimators of the $\textbf{w}$-madogram for six extreme-value copula models. The empirical variances are based on $300$ samples of size $n = 1024$. Solid lines are the theoretical value given by Proposition \ref{Boulin}.}
                \label{fig:missing_estimation}
        \end{figure}
    
        Results for Experiment \textbf{E2} are depicted in Figures \ref{fig:missing_estimation_exp2} and \ref{fig:missing_estimation_exp3}. In Figure \ref{fig:missing_estimation_exp2}, empirical counterparts given by Equation \eqref{empirical_variance} are depicted with points and closed expressions of the asymptotic variance  given by Proposition \ref{Boulin} are  drawn by a surface.  Figure \ref{fig:missing_estimation_exp3} presents the same studies differently by showing the level sets associated to the surfaces of Figure \ref{fig:missing_estimation_exp2}. As in Experiment \textbf{E1}, empirical counterparts given by the points fits the surface. Also, for the first row of Figure \ref{fig:missing_estimation_exp2}, we see that if $\textbf{w} \in \{ \{\textbf{e}_1\}, \{\textbf{e}_2\}, \{\textbf{e}_3\}\}$ then both theoretical and empirical counterparts are different from zero while this feature no longer applies in the second row with the introduction of the corrected version. In this two figures, we see that  $\mathcal{E}_n^\mathcal{H}$ and $\mathcal{E}_n^\mathcal{H*}$ and their empirical counterparts are close.
        \begin{figure}[!htp]
                \begin{subfigure}{.333\textwidth}
                  \centering
                  \caption{\textbf{IND (\ref{sym_log_model}, $\theta = 1.0$})}
                  \includegraphics[width=\linewidth]{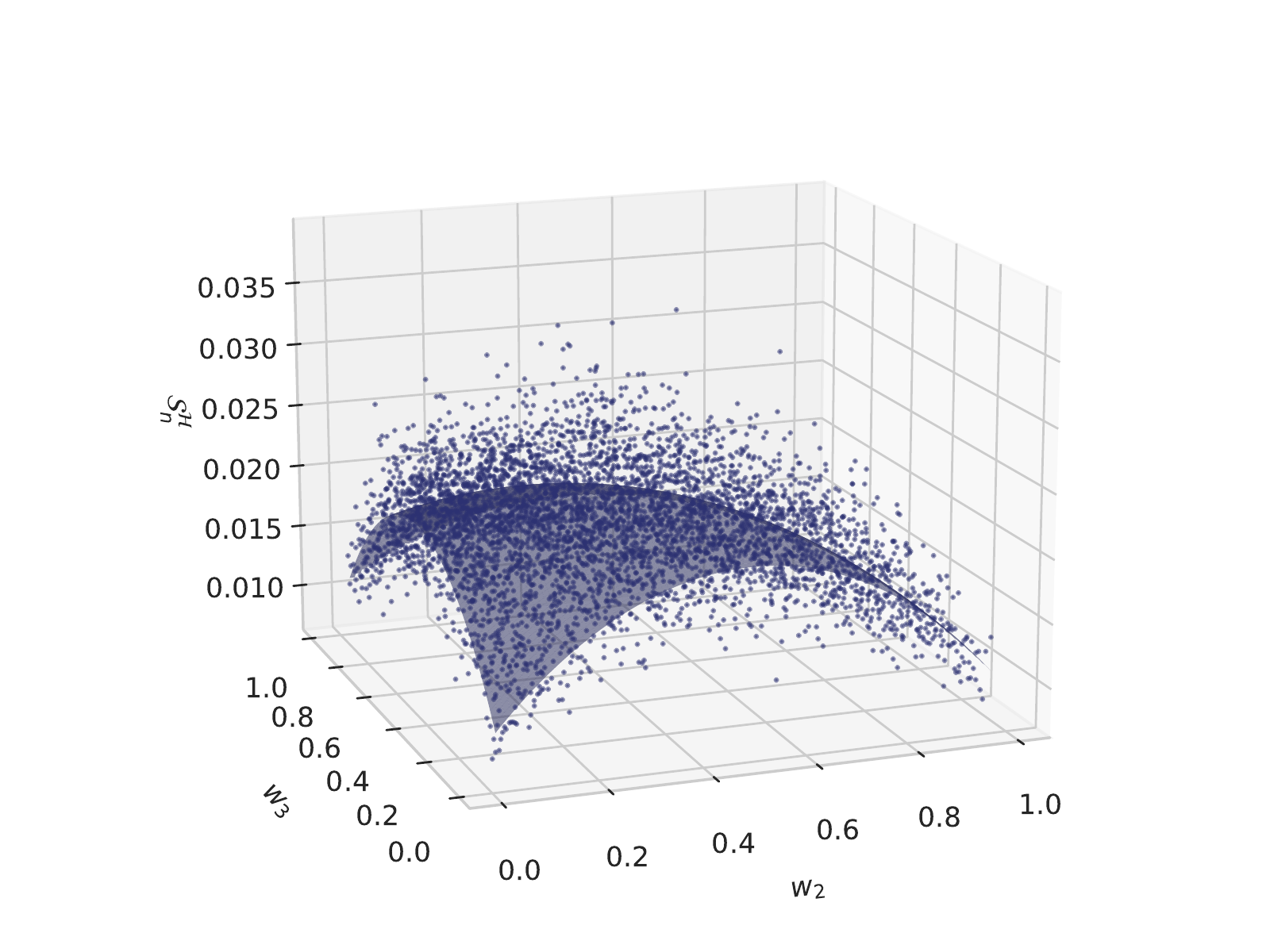}
                  \label{fig:sfig1_exp2}
                \end{subfigure}%
                \begin{subfigure}{.333\textwidth}
                  \centering
                  \caption{\textbf{LOG (\ref{sym_log_model}, $\theta = 2.0$})}
                  \includegraphics[width=\linewidth]{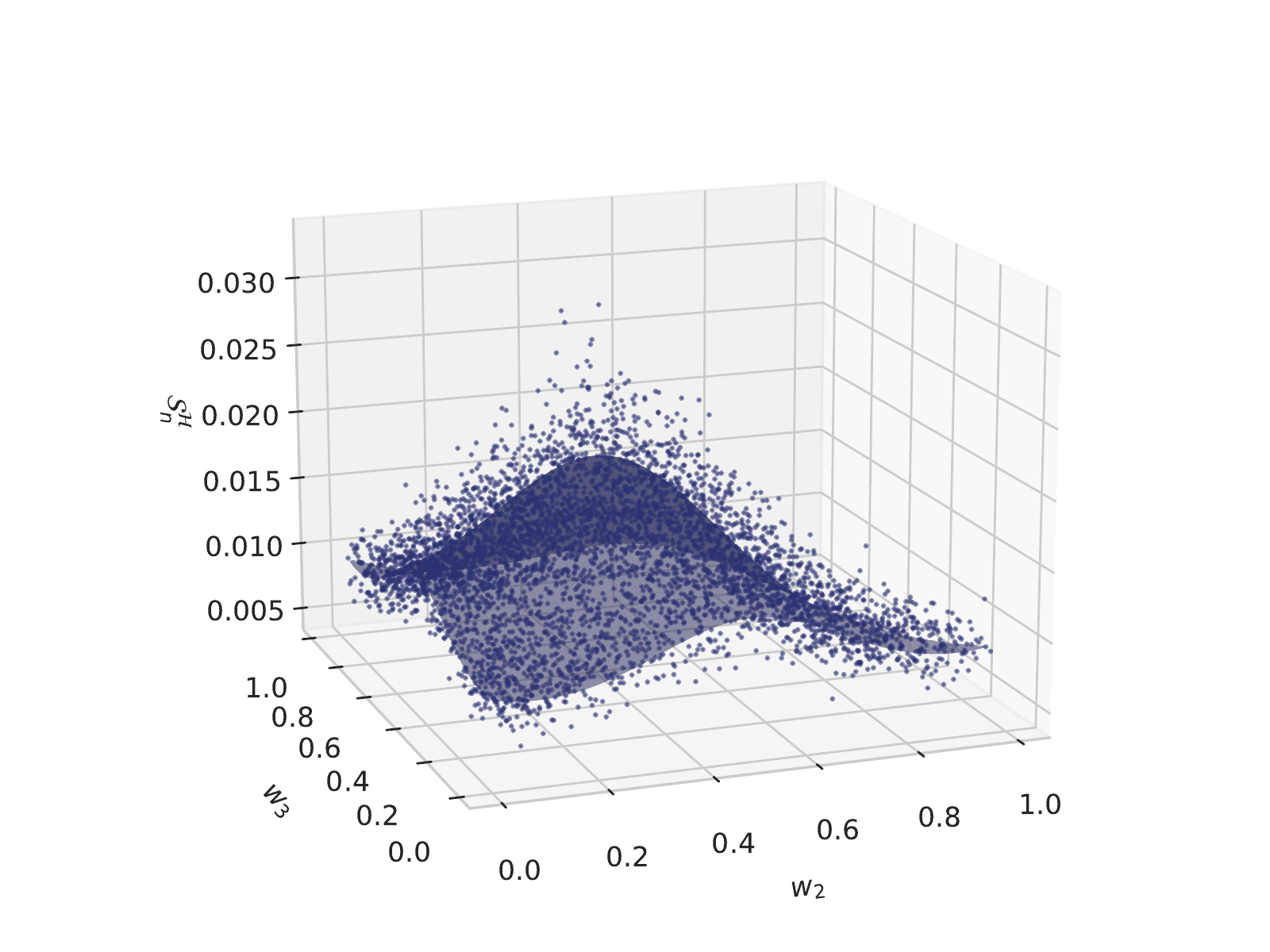}
                  \label{fig:sfig2_exp2}
                \end{subfigure}
                \begin{subfigure}{.333\textwidth}
                  \centering
                  \caption{\textbf{ASL (\ref{asy_log_model}})}
                  \includegraphics[width=\linewidth]{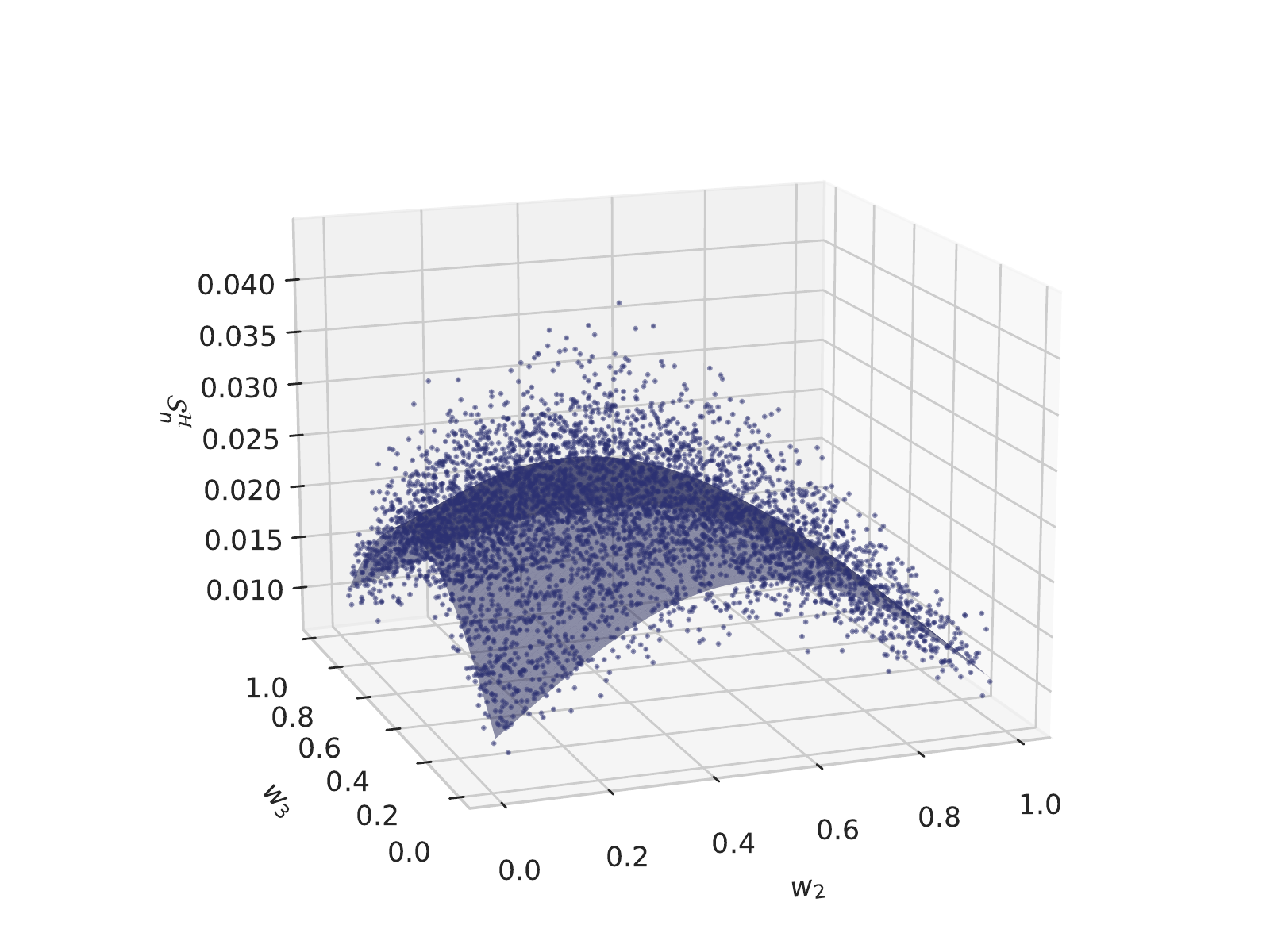}
                  \label{fig:sfig3_exp2}
                \end{subfigure}%
                \hspace{\fill}
                \begin{subfigure}{.333\textwidth}
                  \centering
                  \includegraphics[width=\linewidth]{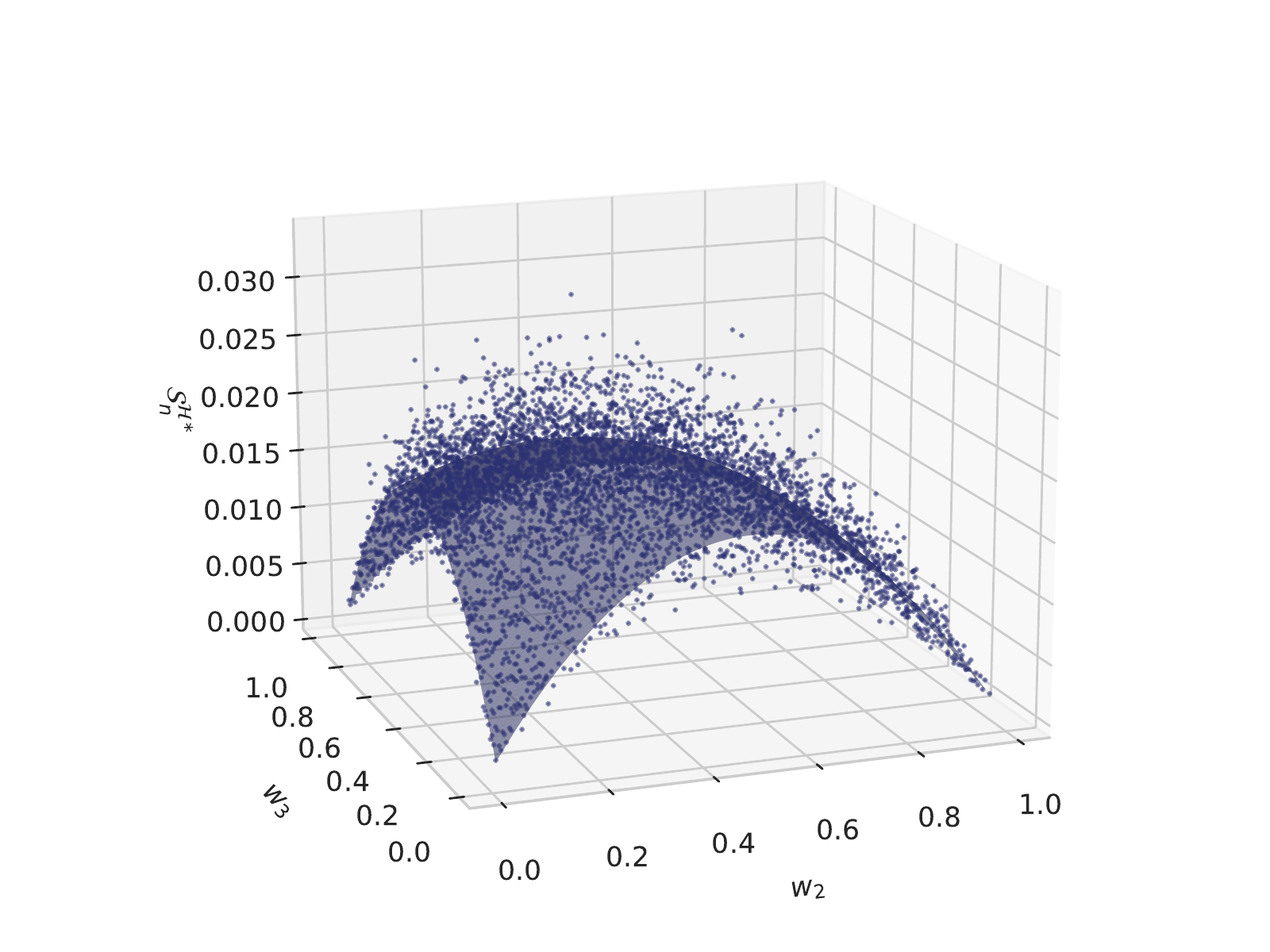}
                  \label{fig:sfig7_exp2}
                \end{subfigure}
                \begin{subfigure}{.333\textwidth}
                  \centering
                  \includegraphics[width=\linewidth]{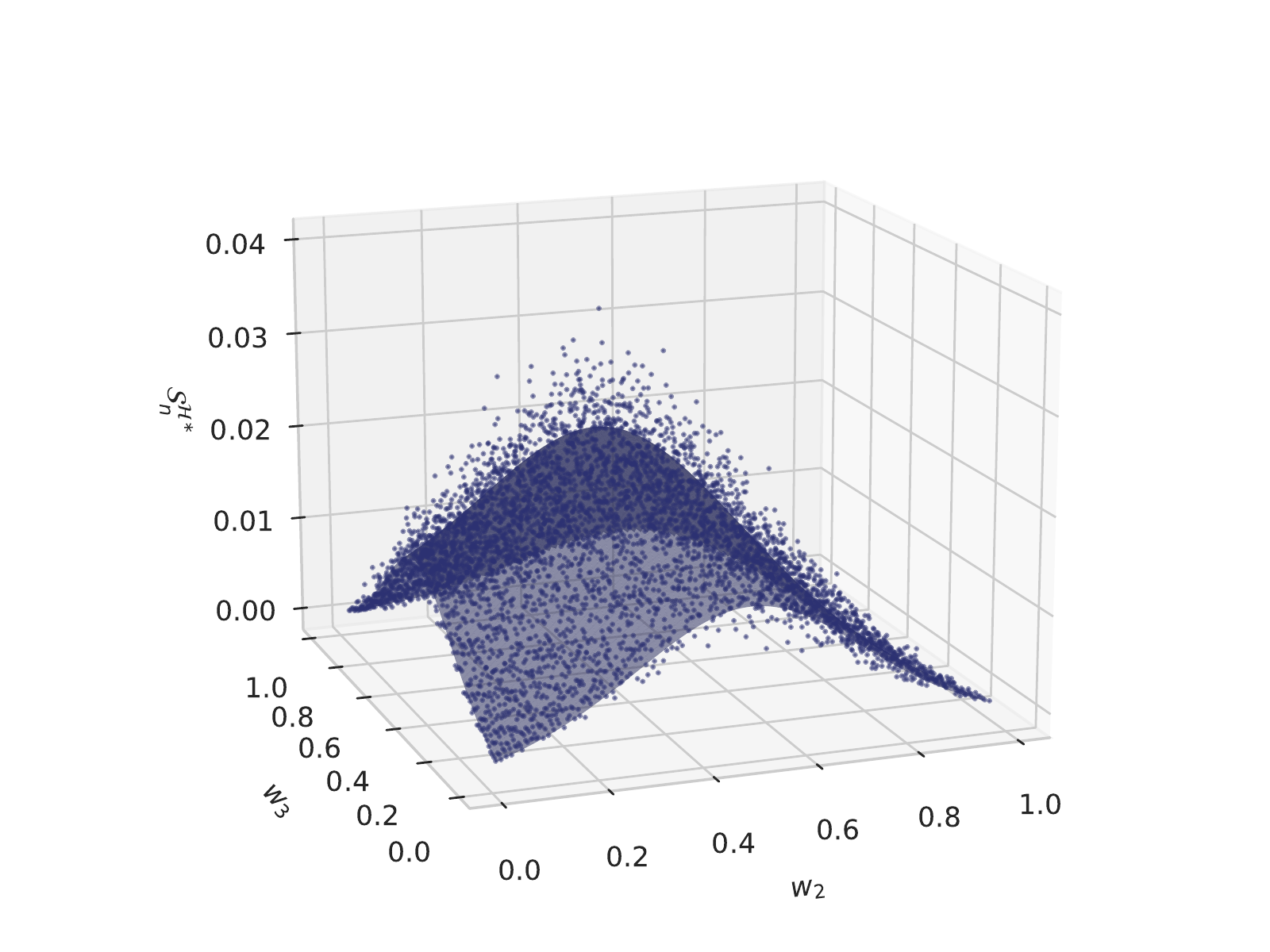}
                  \label{fig:sfig8_exp2}
                \end{subfigure}%
                \begin{subfigure}{.333\textwidth}
                  \centering
                  \includegraphics[width=\linewidth]{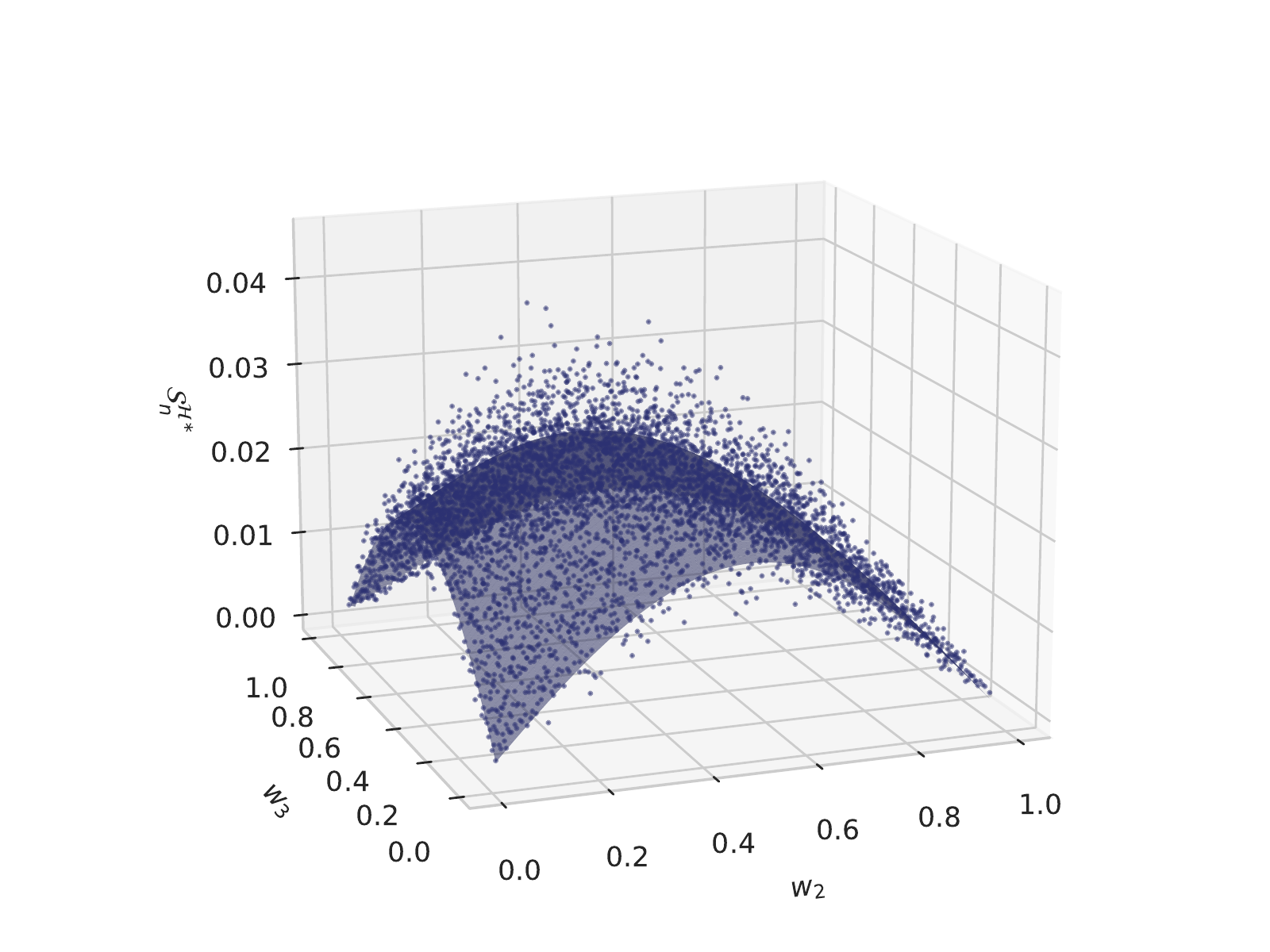}
                  \label{fig:sfig9_exp2}
                \end{subfigure}
                \hspace{\fill}
                \caption{$\mathcal{E}_n^\mathcal{H}$ (first row)  and $\mathcal{E}_n^\mathcal{H*}$ (second row) given by \eqref{empirical_variance}  as a function of $\textbf{w}$-madogram. The empirical variances are based on $100$ samples of size $n = 512$. Empirical counterparts are represented with points and theoretical values given by Proposition \ref{Boulin} are drawn by a surface.} 
                \label{fig:missing_estimation_exp2}
        \end{figure}
        
        \begin{figure}[!htp]
            \begin{subfigure}{.333\textwidth}
                  \centering
                  \caption{\textbf{IND (\ref{sym_log_model}, $\theta = 1.0$})}
                  \includegraphics[width=\linewidth]{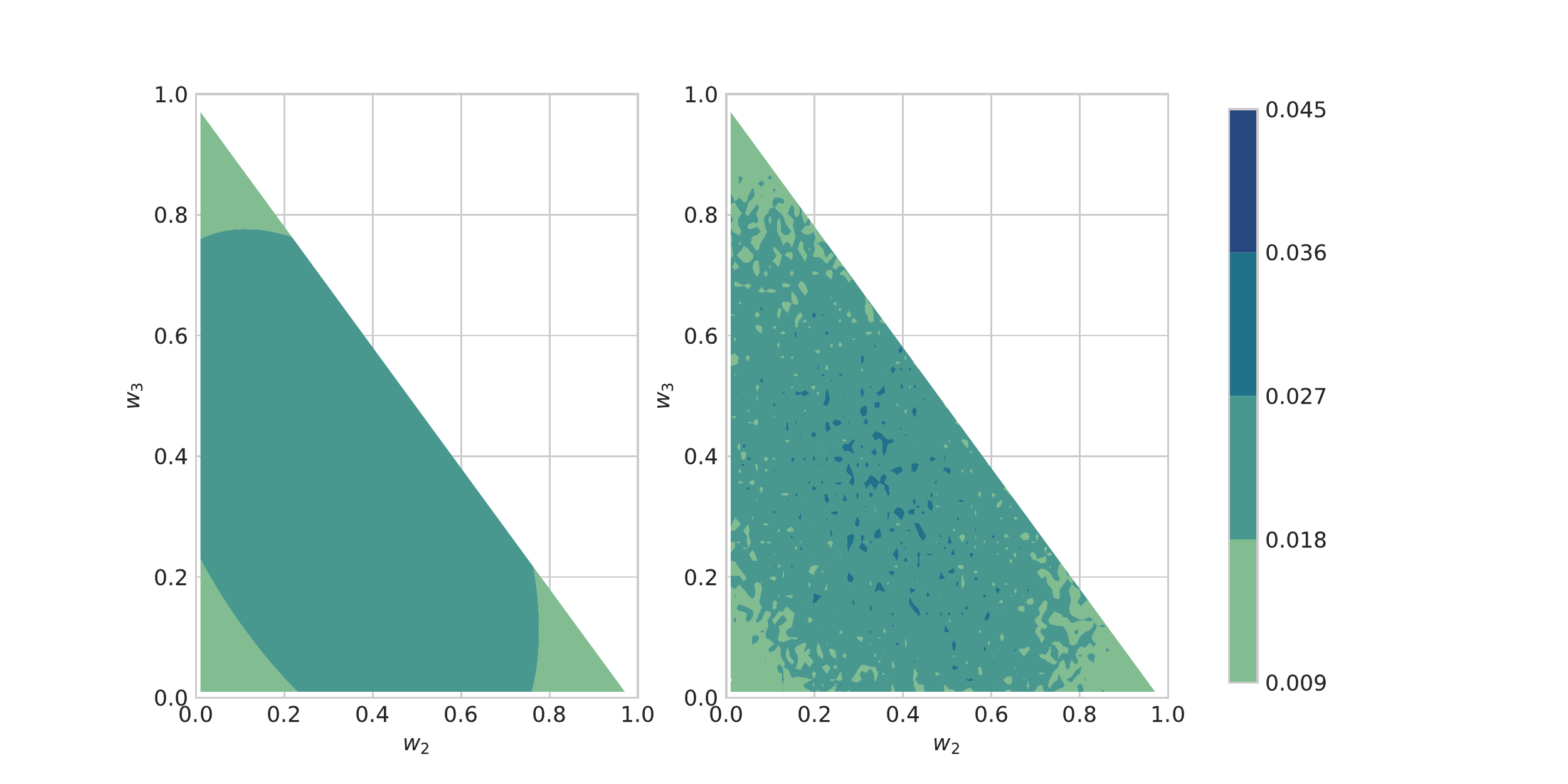}
                  \label{fig:sfig4_exp2}
                \end{subfigure}
                \begin{subfigure}{.333\textwidth}
                  \centering
                  \caption{\textbf{LOG (\ref{sym_log_model}, $\theta = 2.0$})}
                  \includegraphics[width=\linewidth]{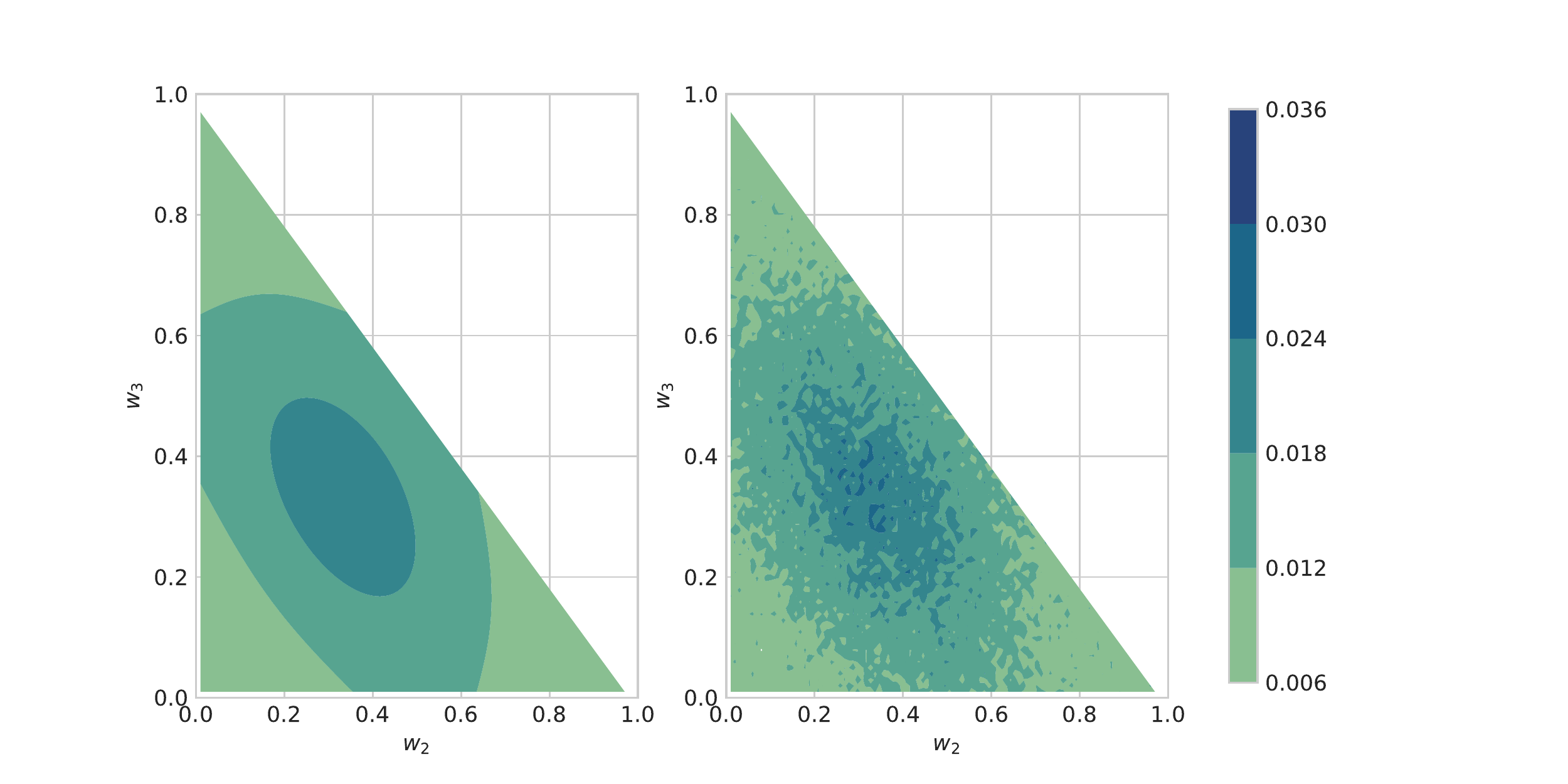}
                  \label{fig:sfig5_exp2}
                \end{subfigure}%
                \begin{subfigure}{.333\textwidth}
                  \centering
                  \caption{\textbf{ASL (\ref{asy_log_model}})}
                  \includegraphics[width=\linewidth]{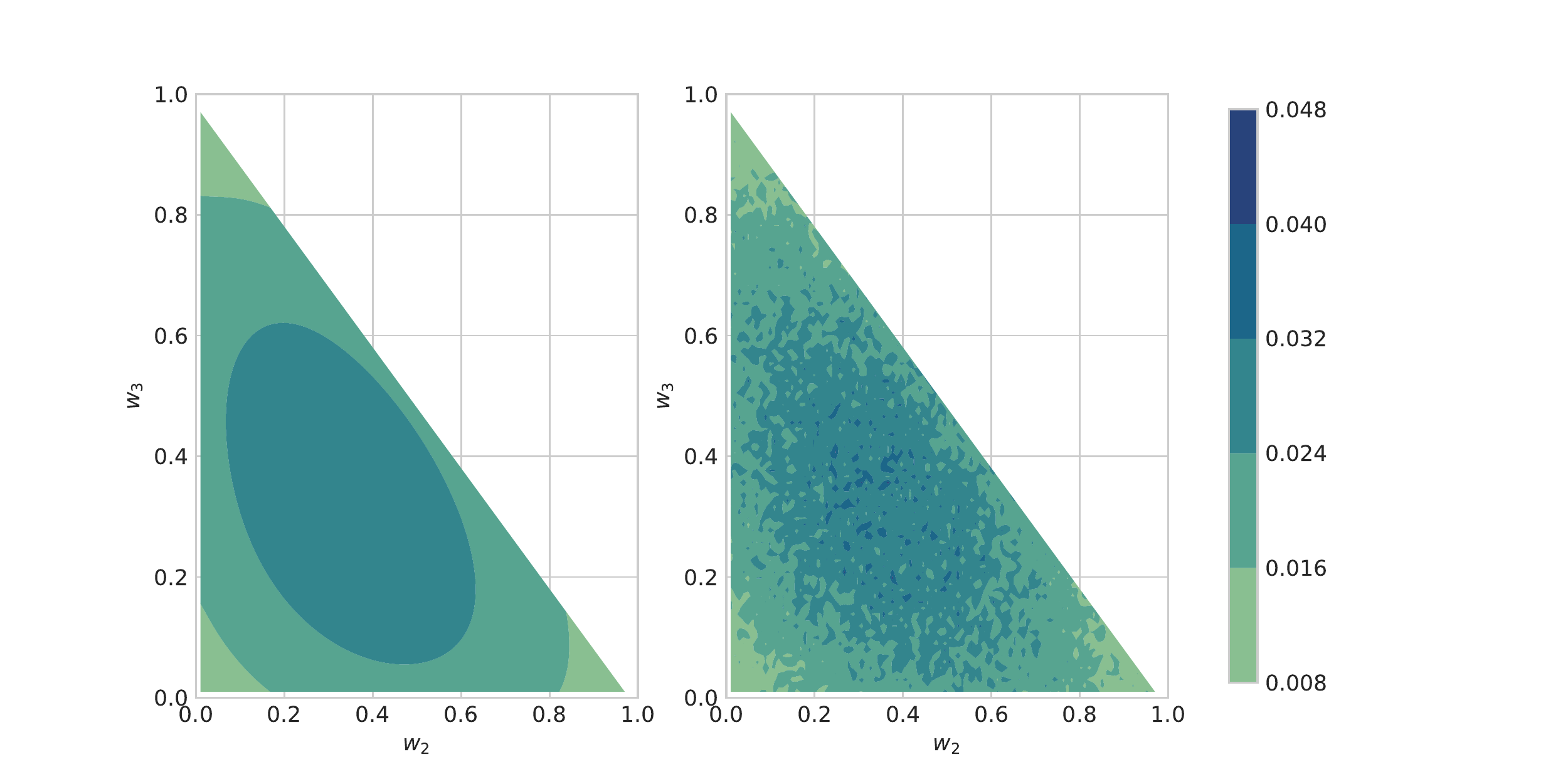}
                  \label{fig:sfig6_exp2}
                \end{subfigure}
                \hspace{\fill}
                \begin{subfigure}{.333\textwidth}
                  \centering
                  \includegraphics[width=\linewidth]{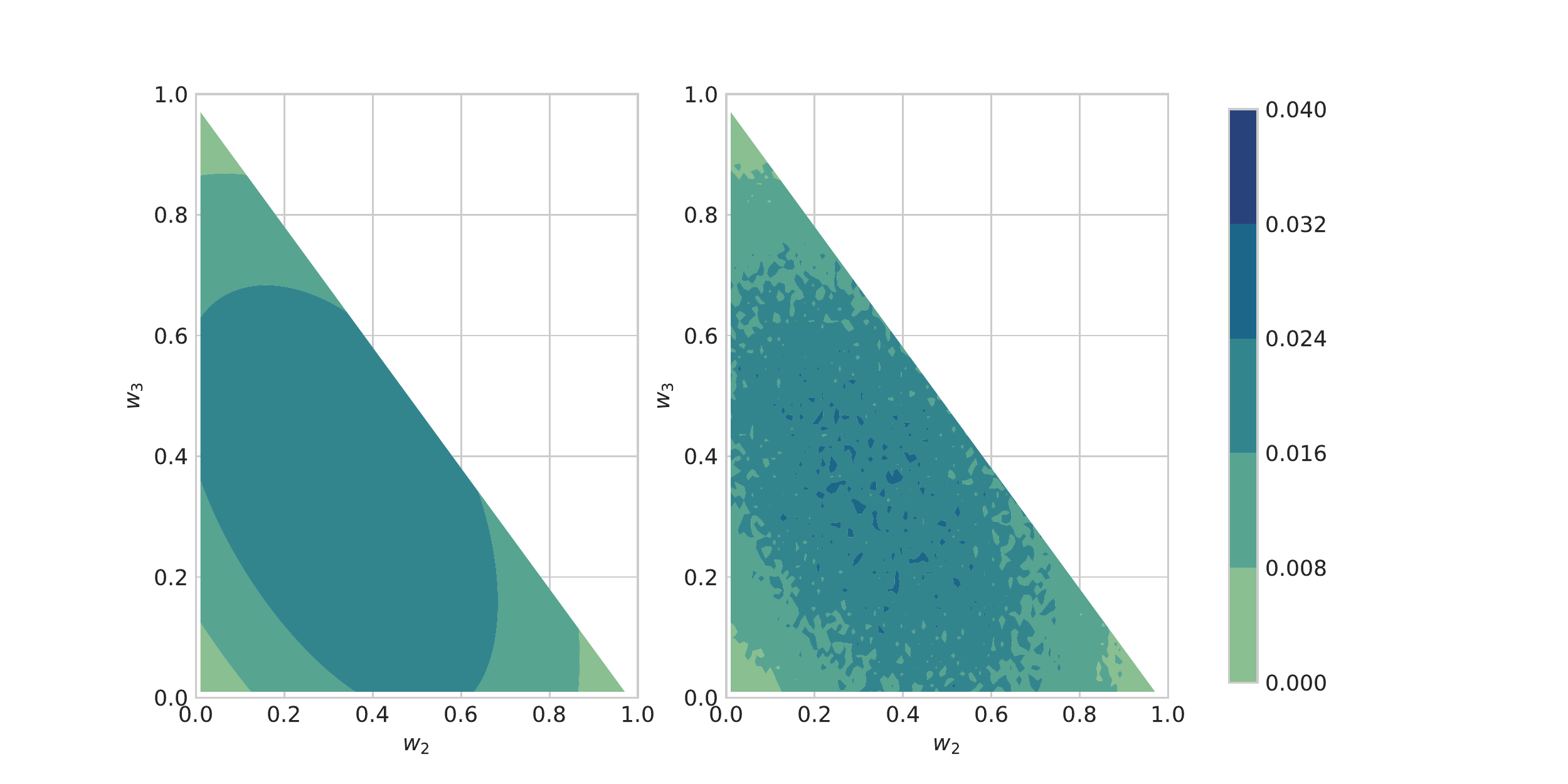}
                  \label{fig:sfig10_exp2}
                \end{subfigure}
                \begin{subfigure}{.333\textwidth}
                  \centering
                  \includegraphics[width=\linewidth]{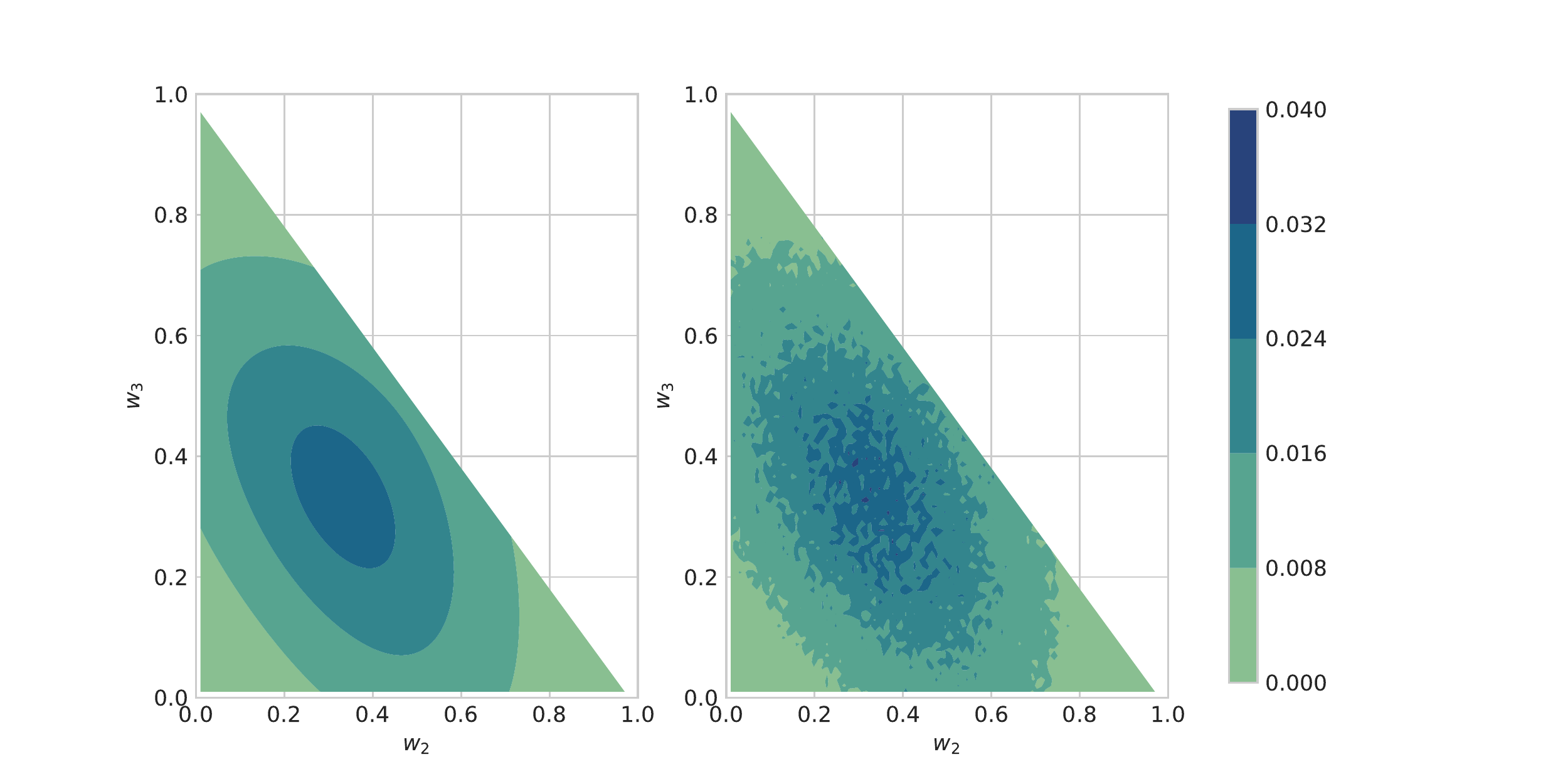}
                  \label{fig:sfig11_exp2}
                \end{subfigure}%
                \begin{subfigure}{.333\textwidth}
                  \centering
                  \includegraphics[width=\linewidth]{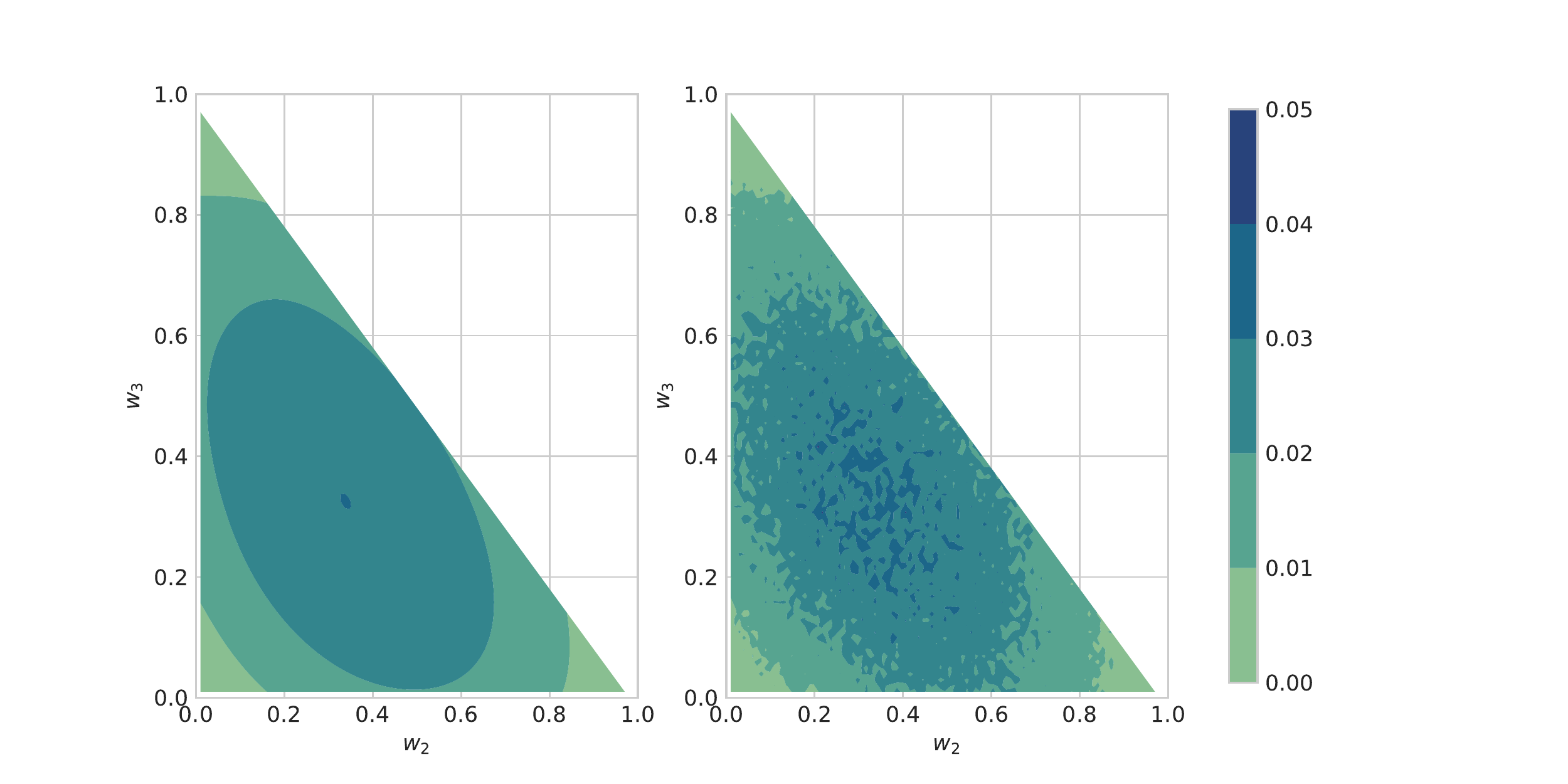}
                  \label{fig:sfig12_exp2}
                \end{subfigure}
                \hspace{\fill}
            \caption{Level sets of $\mathcal{E}_n^\mathcal{H}$ and $\mathcal{E}_n^\mathcal{H*}$, as a function of $\textbf{w}$, of the asymptotic variances of the estimators of the $\textbf{w}$-madogram. We present the level sets  corresponding to sufaces of Fig \ref{fig:missing_estimation_exp2}. On the left panel is represented the theoretical value given by Proposition \ref{Boulin} while on the right the empirical counterpart is given.}
            \label{fig:missing_estimation_exp3}
        \end{figure}

        In order to quantify errors in Figures \ref{fig:missing_estimation} and \ref{fig:missing_estimation_exp2}, in Table \ref{tab:mise} are displayed $\widehat{MISE}_n^{\mathcal{H}}$ and $\widehat{MISE}_n^{\mathcal{H}*}$ for the corresponding models in Experiments \textbf{E1} and \textbf{E2} to appreciate the proximity between the terms $\mathcal{E}_n^{\mathcal{H}}$ and $\mathcal{S}^{\mathcal{H}}$ (respectively for the corrected terms $\mathcal{E}_n^{\mathcal{H}*}$ and $\mathcal{S}^{\mathcal{H}*}$). As indicated by Figures \ref{fig:missing_estimation} and \ref{fig:missing_estimation_exp2}, errors in Table \ref{tab:mise} are close to zero.
        
        \begin{table*}[!htp] \centering
            \begin{tabular}{cccccccccc}\toprule
           & \multicolumn{6}{c}{\textbf{E1}} & \multicolumn{3}{c}{\textbf{E2}} \\
\cmidrule(lr){2-7}\cmidrule(lr){8-10}
            \textbf{MISE ($\times 10^{-5}$)} & \textbf{GAL} & \textbf{ANL} & \textbf{ASL} & \textbf{ASM} & \textbf{HR} & \textbf{tEV} & \textbf{IND} & \textbf{LOG} & \textbf{ASL} \\ \midrule
            \textbf{$\widehat{MISE}^{\mathcal{H}}_n$} & 2.49 & 8.10 & 2.43 & 1.85 & 1.89 & 1.93& 2.93 & 1.31 & 3.40  \\ \hdashline
            \textbf{$\widehat{MISE}^{\mathcal{H}*}_n$} & 2.77 & 7.02 & 2.04 & 1.94 & 1.96 & 1.93&  1.95 & 1.57 & 2.91 \\
            \bottomrule
            \end{tabular}
            \caption{Estimation of $MISE^{\mathcal{H}}$ and $MISE^{\mathcal{H}*}$ ($\times 10^{-5}$) defined in \eqref{eq:mise} for Experiment \textbf{E1} in the sixth first columns and \textbf{E2} in the last three columns.}
            \label{tab:mise}
        \end{table*}
        
        Figure \ref{fig:boxplot} illustrates the results of Experiment \textbf{E3} where we have drawn boxplots for Equation \eqref{rapport}. Not surprisingly, we observe that both the size of the boxplots and the median value  are increasing with $d$. However, this augmentation drops as the sample size $n$ increases and seems to appear as reasonable and able to handle the case of rather high dimensional data. A limitation (due to computation time issues) of this figure is that the number of points on the simplex is constant ($=300$) as a function of the dimension.
        
        \begin{figure}[!htp]
            \centering
            \includegraphics[scale = 0.75]{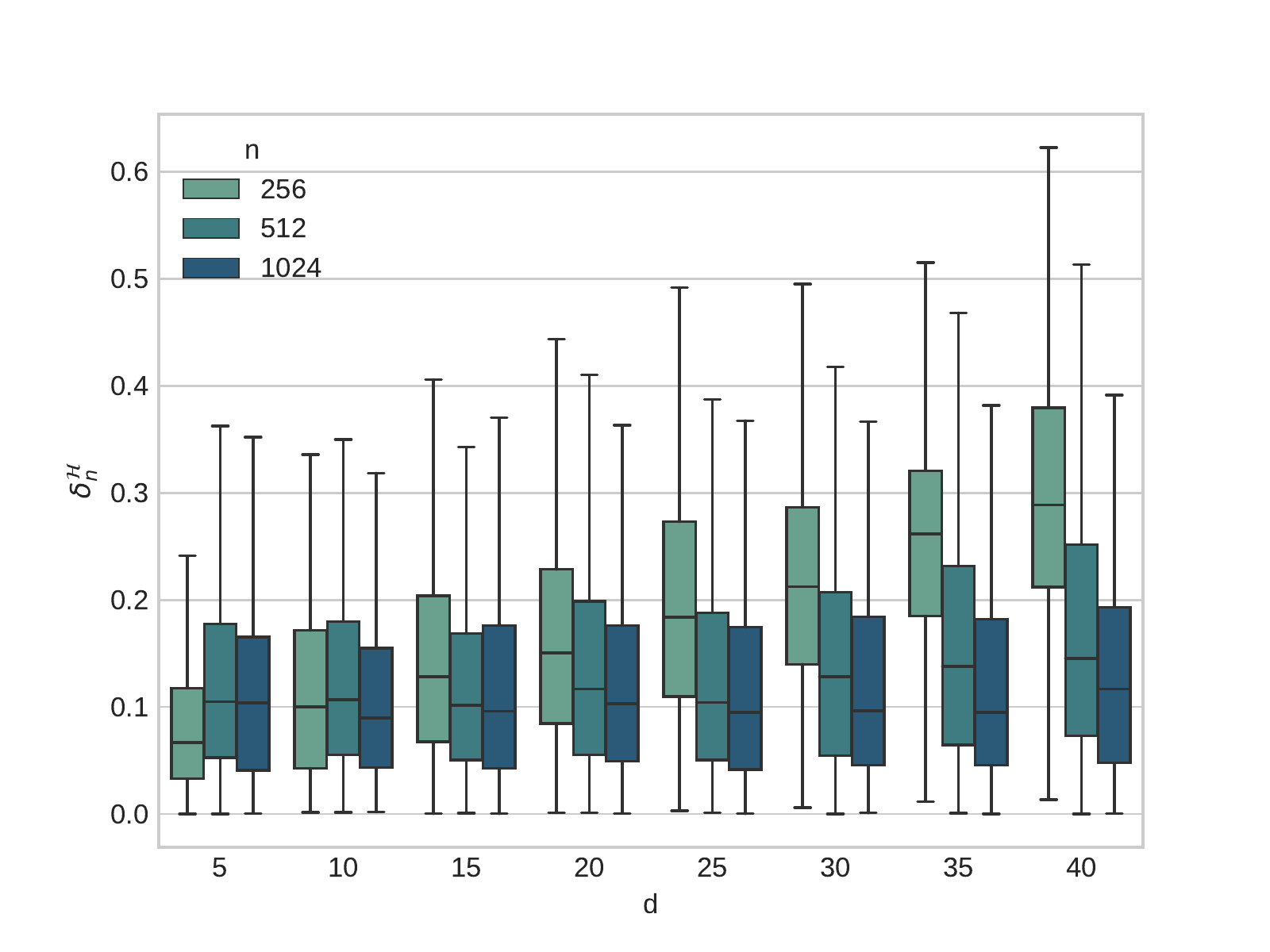}
            \caption{Boxplots for $\delta_n^{\mathcal{H}}$ for different values of $d$ and $n$.}
            \label{fig:boxplot}
        \end{figure}
        
        \section{Extremal dependence rainfall analysis via hybrid madogram}
        \label{sec:4}
        In climate studies, extreme events such as heavy precipitations  represent major challenge since damages from extreme weather events may have heavy consequences in both economic and human terms. Their spatial characteristics are of a prime interest and $\textbf{w}$-madogram and its estimator  studied in this paper  (see Equation \eqref{hybrid_lambda_fmado})   are able to capture those characteristics. A seminal application which bridges extreme value theory and geostatistics is the study of extreme rainfall since we expect spatial dependence among the recording weather stations. Precisely, we observe daily precipitation at station $j \in \{1,\dots,d\}$ over $n$ years. Concerning extreme events, one cannot use directly the observation for inference and we focus on block maxima. The block maxima approach is based on the observation of a sample of block maxima $\textbf{X}_i = (X_{i,1}, \dots, X_{i,d})$ where $X_{i,j}$ corresponds to the maximum at station $j \in \{1,\dots,d\}$ within the $i$th disjoint block of observation. A block could be either hourly, daily or annual for example. Consistent to our approach, we do not observe $\textbf{X}_i$ but an incomplete vector $\tilde{\textbf{X}}_i \in \bigotimes_{j=1}^d (\mathbb{R}_+ \cup \textup{NA})$. Our main goal is to estimate the extremal dependence between maxima of groups of station. This will be done for several clusters within which similar climate characteristics are envisaged leading to dependence among extremes.
        
        For each cluster, we compute the corrected hybrid madogram in Equation \eqref{hybrid_lambda_fmado}. This quantity is used to estimate the extremal coefficient (see for instance \cite{Smith2005MAXSTABLEPA}), using the relation between the Pickands and the madogram given in  Equation \eqref{w_mado_pickands}, defined by
        \begin{equation}
            \label{eq:ext_coeff}
           \theta = d\, A \left(\frac{1}{d}, \dots, \frac{1}{d}\right).
        \end{equation}
        
        This  satisfies the condition $1 \leq \theta \leq d$, where the lower and upper bounds represent the case of complete positive dependence and independence among the extremes, respectively. Since its upper bound depends on $d$, the extremal coefficient can, alas, only be used to compare clusters of the same size. In each cluster, the extremal coefficient in \eqref{eq:ext_coeff} is estimated by $\hat{\theta}_n = d \, \hat{A}_n^{\mathcal{H}*}$ where $\hat{A}_n^{\mathcal{H}*}$ is given in Definition \ref{def:pick_hat}.
        
        
        We illustrate the proposed methodology on rainfall data measured in millimimeter registred in 95 stations in Center Eastern Canada for a duration of 24 hours publicly available in the section \href{https://climate.weather.gc.ca/prods_servs/engineering_e.html}{engineering climate datasets} of the Government of Canada website. Annual maxima precipitations for a 24-hour duration are recorded from 1914 to 2017. The location of stations in Fig. \ref{fig:app} are given in the WGS84 coordinate space in order to have Euclidean distance between the stations and taking account of the geodesic geometry of the Earth. A specific characteristic of the considered rainfall data is the sparsity of the recorded data, \emph{i.e.} a lot of recordings are missing (see \cite{palaciosrodriguez:hal-03355026} for details). Four stations were removed of the analysis due to a tiny coverage of the observation period. As the measurements are maxima over a long period of time, it is reasonable to assume that they come from a multivariate extreme value distribution see Equation \eqref{evc}. The dataset we consider in this section and codes are available in the \href{https://github.com/Aleboul/missing/tree/main/fig_5}{github repository}.
        
        With the remaining 91 stations, we compare the extremal dependence between several groups of stations as it has been done by \cite{MARCON20171} (see Section 5) for France using a dataset with complete observations. We emphasize that the comparison of the extremal coefficient is solely relevant when clusters are of the same size. Thus, clusters were obtained by running the constrained $k$-means algorithm on the station coordinates (see for instance \cite{bradley2000constrained}) by forcing clusters of the same size : $d = 7$ or $d = 13$, \emph{i.e.} $13$ groups of $7$ stations and \emph{vice versa}. As overlapping data naturally decrease as the size of clusters increases, the case of cluster size $d = 13$ cannot be considered here. Among the 13 clusters of size $d = 7$, we only keep those having at least $10$ overlapping annual maxima within the cluster which results on $7$ remaining clusters depicted in Figure \ref{fig:sfig_clust}. The estimated coefficient range is between $3.88$, indicating strong dependence, and $5.07$, indicating medium dependence (see Figure \ref{fig:sfig_ext_coeff}).
        
        \begin{figure}[!htp]
                \begin{subfigure}{.5\textwidth}
                  \centering
                  \includegraphics[scale = 0.5]{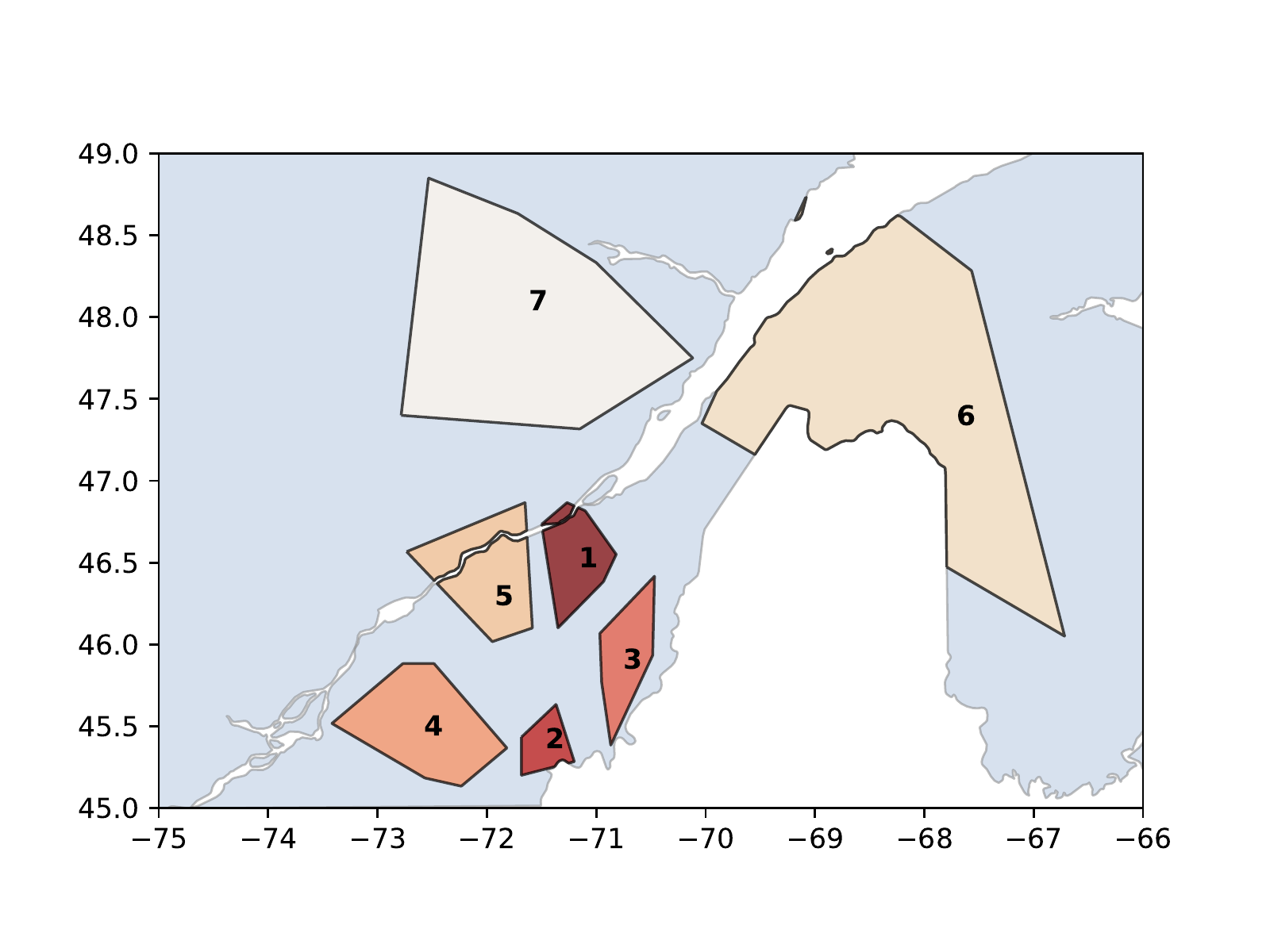}
                  \caption{Resulting clusters using constrained $k$-means}
                  \label{fig:sfig_clust}
                \end{subfigure}%
                \begin{subfigure}{.5\textwidth}
                  \centering
                  \includegraphics[scale = 0.5]{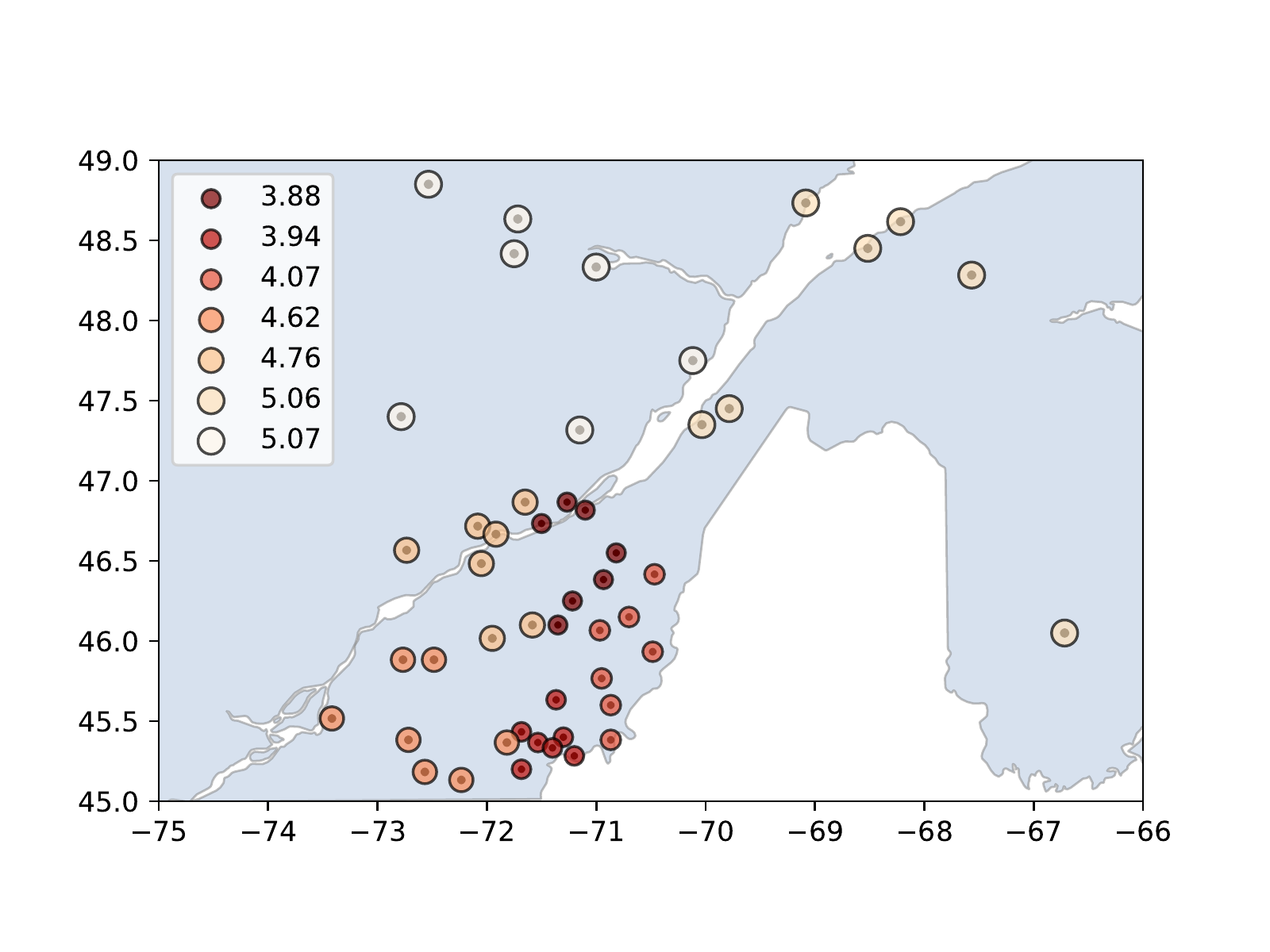}
                  \caption{Values of the extremal coefficient for each cluster}
                  \label{fig:sfig_ext_coeff}
                \end{subfigure}
            \caption{Analysis of Canadian annual rainfall maxima in the period 1914-2017. \textbf{(a)} Spatial representation of the $7$ selected clusters obtained via the constrained k-means algorithm. (\textbf{b}) Clusters of $49$ weather stations and their estimated extremal coefficients (with $d = 7$) obtained with the corrected version of the hybrid madogram.}
            \label{fig:app}
        \end{figure}
        Our estimations suggest an acute dependence among extremes in clusters $1$-$3$ in Figure \ref{fig:sfig_clust}. We can observe in Figure \ref{fig:sfig_ext_coeff} that extreme precipitations are more likely to be dependent in  the central coastal Atlantic region, \emph{a contrario}, one can notice a weak dependence among extreme values in the scattered clusters in the north of the region.
        
\section{Conclusions\label{sec:5}}
    A method based on madograms to estimate multivariate extremal dependencies with allowing missing data has been developed in this paper. Under the \textbf{MCAR} hypothesis, we studied the asymptotic behaviour for the proposed estimators. This approach is of interest to study spatio-temporal process ponctually observed as observations may not overlap. Moreover, we have derived closed expressions of their respective asymptotic variances for a fixed element in the simplex. Numerical results in a finite sample setting give  further evidences to our theoretical results and on performances of the proposed estimators of the madogram in the missing data setting. Finally, we applied our approach to the study of extremal dependencies of annual maxima of daily rainfall in Central Eastern Canada.
    
    As for future work, an interesting improvement could be to lower the \textbf{MCAR} assumption on the misssing data. Indeed, estimating nonparametrically the empirical copula process with missing data outside this framework is still unexplored. As a starting point, semiparametric inference for copula and copula based-regression allowing missing data under \textbf{M}issing \textbf{A}t \textbf{R}andom (\textbf{MAR}) mechanism have been studied by \cite{HAMORI201985} and \cite{HAMORI2020104654}.
    
    Another interesting direction could also be to build a dissimiliraty measure based on the bivariate $\textbf{w}$-madogram for clustering. This approach was already tackled by \cite{bernard:hal-03207469}, \cite{BADOR201517} and \cite{saunders} to partition respectively France, Europe and Australia with respect to extreme observations using the sole madogram. The idea here could be to use the infimum or the integral over $\textbf{w} \in (0,1)$ of the bivariate $\textbf{w}$-madogram as a dissimilarity measure and to show its strong consistency in the sense formulated by \cite{10.1214/aos/1176345339}. One limitation of our application is that clusters of same size is mandatory to compare the estimated extremal coefficient between clusters in Equation \eqref{eq:ext_coeff}. This feature stems from the bounds of the Pickands dependence function which depends on the dimension of the extremal random vector. Further investigations are thus needed to interpret extremal coefficient between clusters of different sizes, \emph{e.g.} to assess asymptotic independence between two extremal random vectors.

\section*{Acknowledgments}
This work has been supported by the project  ANR McLaren (ANR-20-CE23-0011).  This work has been partially supported  by  the French government, through the 3IA C\^{o}te d'Azur Investments in the Future project managed by the National
Research Agency (ANR) with the reference number ANR-19-P3IA-0002. This work was also supported by the french national programme LEFE/INSU.
 
\appendix
    \section{Proofs}
    \label{proof}
        \subsection{Proofs of  main results}
                For the rest of this section, we will write, for notational convenience, $n_i = \Pi_{j=1}^d I_{i,j}$ and $N = \sum_{i=1}^n n_i$. The following proof gives arguments used to establish the functional central limit theorem of our processes defined in Equation (\ref{processes}). Before going into details, we need an intermediary lemma to assert that the empirical cumulative distribution functions in case of missing data  verify Assumption \ref{Cond_3} and give covariance functions of the asymptotic processes $\alpha$ and $\beta_j$ with $j\in \{1,\dots,d\}$. This result comes down from \cite{segers2014hybrid} (see Example 3.5) where the result was proved for bivariate random variables but the higher dimension is directly obtained using same arguments.
                \begin{lemma}
                    \label{lemma_1}
                  Let $(\sqrt{n} (\hat{F}_n - F); \sqrt{n}(\hat{F}_{n,1} - F_1), \dots,\sqrt{n}(\hat{F}_{n,d} - F_d))$ with $\hat{F}_n$ and $\hat{F}_{n,j}$ for $j \in \{1,\dots,d\}$ as in  \eqref{F_X_G_Y_missing}. Then   Assumption \ref{Cond_3}  is satisfied with                   \begin{align*}
                        &\beta_j (u_j) = p_j^{-1} \mathbb{G}\left(\mathds{1}_{\{X_j \leq F_j^{\leftarrow}(u_j), I_j = 1\}} - u_j \mathds{1}_{\{I_j = 1\}} \right), \quad j \in \{1,\dots,d\},\\
                        &\alpha(\textbf{u}) = p^{-1} \mathbb{G}\left(\mathds{1}_{\{\mathbf{X} \leq \mathbf{F}^{\leftarrow}_d(\mathbf{u}),\emph{\textbf{I}}=\emph{\textbf{1}}\}} - C(\emph{\textbf{u}}) \mathds{1}_{\{\emph{\textbf{I}}=\emph{\textbf{1}}\}} \right),
                    \end{align*}
                    where $\mathbb{G}$ is a tight Gaussian process. Furthermore the covariance functions of the processes $\beta_j(u_j)$, $\alpha(\textbf{u})$,  for $(\emph{\textbf{u}},\emph{\textbf{v}},v_k) \in [0,1]^{2d+1}$, $j \in \{1,\dots,d\}$ and $j < k$, are given by 
                    \begin{align*}
                        & cov\left(\beta_j(u_j), \beta_j(v_j) \right) = p_j^{-1}\left( u_j \wedge v_j - u_j v_j \right), \\
                        & cov\left(\beta_{j}(u_j), \beta_k(v_k) \right) = \frac{p_{jk}}{p_j p_k} \left( C(\boldsymbol{1}_{j,k}(u_j,v_k)) - u_jv_k \right),\\
                        & cov\left(\alpha(\emph{\textbf{u}}), \alpha(\emph{\textbf{v}})\right) =  p^{-1} \left( C(\emph{\textbf{u}} \wedge \emph{\textbf{v}}) - C(\emph{\textbf{u}}) C(\emph{\textbf{u}}) \right), \\
                        & cov\left(\alpha(\emph{\textbf{u}}), \beta_j(v_j)\right) = p_j^{-1}\left( C(\emph{\textbf{u}}_j(u_j\wedge v_j)) - C(\emph{\textbf{u}}) v_j \right),
                    \end{align*}
                    where $\emph{\textbf{u}} \wedge \emph{\textbf{v}}$ denotes the vector of componentwise minima and $p_{jk} = \mathbb{P}(I_j = 1, I_k = 1)$. 
                \end{lemma}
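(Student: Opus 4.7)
The plan is to reduce the lemma to a standard application of Donsker's theorem combined with a Slutsky-type argument to absorb the random denominators $n_j$ and $N$. First I would recast the centred estimators in additive form:
\begin{align*}
\sqrt{n}\bigl(\hat{F}_{n,j}(x) - F_j(x)\bigr) &= \frac{n}{n_j}\cdot\frac{1}{\sqrt{n}}\sum_{i=1}^n\bigl(I_{i,j}\mathds{1}_{\{X_{i,j}\le x\}} - I_{i,j}F_j(x)\bigr),\\
\sqrt{n}\bigl(\hat{F}_n(\textbf{x}) - F(\textbf{x})\bigr) &= \frac{n}{N}\cdot\frac{1}{\sqrt{n}}\sum_{i=1}^n\bigl(n_i\mathds{1}_{\{\textbf{X}_i\le \textbf{x}\}} - n_iF(\textbf{x})\bigr),
\end{align*}
and note that under \textbf{MCAR} (Assumption \ref{Cond_2}) each summand is centred, since $\mathbb{E}[I_{i,j}\mathds{1}_{\{X_{i,j}\le x\}}]=p_j F_j(x)$ and $\mathbb{E}[n_i\mathds{1}_{\{\textbf{X}_i\le \textbf{x}\}}]=p\,F(\textbf{x})$.

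Next I would introduce the enlarged class of functions on $\mathbb{R}^d\times\{0,1\}^d$
\[
\mathcal{F} = \bigl\{(\textbf{x},\textbf{i})\mapsto i_j\mathds{1}_{\{x_j\le t\}}-i_jF_j(t): t\in\mathbb{R},\,j\in\{1,\ldots,d\}\bigr\}\cup\bigl\{(\textbf{x},\textbf{i})\mapsto \textstyle\prod_k i_k(\mathds{1}_{\{\textbf{x}\le \textbf{t}\}}- F(\textbf{t})):\textbf{t}\in\mathbb{R}^d\bigr\},
\]
which is a VC (hence Donsker) class of uniformly bounded measurable functions, being obtained by multiplying the classical VC classes of rectangular indicators by the bounded coordinates of $\textbf{I}$. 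Donsker's theorem then supplies weak convergence of the joint empirical process indexed by $\mathcal{F}$ to a tight centred Gaussian process $\mathbb{G}$ in the product $\ell^\infty$-space.

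Third, $n/n_j\overset{a.s.}{\rightarrow}1/p_j$ and $n/N\overset{a.s.}{\rightarrow}1/p$ by the strong law of large numbers; combining with Slutsky's lemma for weakly converging maps in $\ell^\infty$ and the continuous mapping theorem yields the joint convergence claimed in Assumption \ref{Cond_3} with $\alpha$ and $\beta_j$ of the asserted form, after reparametrising by $\textbf{u}=\textbf{F}_d(\textbf{x})$ (the reparametrisation preserves weak convergence because, by continuity of $F_1,\dots,F_d$, the limiting trajectories are continuous on compact ranges). The four covariance formulas then follow by direct expansion: for instance
\[
\mathrm{cov}\bigl(\beta_j(u_j),\beta_j(v_j)\bigr)=p_j^{-2}\,\mathbb{E}\bigl[(I_j\mathds{1}_{\{X_j\le F_j^\leftarrow(u_j)\}}-u_jI_j)(I_j\mathds{1}_{\{X_j\le F_j^\leftarrow(v_j)\}}-v_jI_j)\bigr]=p_j^{-1}(u_j\wedge v_j-u_jv_j),
\]
where MCAR is used to factorise $\mathbb{E}[I_j\,\cdot\,\mathds{1}]=p_j\,\mathbb{E}[\mathds{1}]$; the mixed covariances $\mathrm{cov}(\beta_j,\beta_k)$, $\mathrm{cov}(\alpha,\alpha)$ and $\mathrm{cov}(\alpha,\beta_j)$ are obtained analogously using $\mathbb{E}[I_jI_k]=p_{jk}$ and $\mathbb{E}[\prod_\ell I_\ell]=p$, together with continuity of the margins to translate $\mathbb{P}(X_j\le F_j^\leftarrow(u_j))=u_j$.

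The main technical point is the handling of the random denominators: because $n_j/n$ and $N/n$ concentrate at the strictly positive deterministic limits $p_j$ and $p$, they can be extracted from the empirical sums via Slutsky, and the resulting reciprocals $p_j^{-1},p^{-1}$ appear as the multiplicative factors in $\alpha$ and $\beta_j$. The remaining ingredients (Donsker property of $\mathcal{F}$, reparametrisation by $\textbf{F}_d$, direct covariance identification) are the verbatim generalisation to arbitrary $d\ge 2$ of Example 3.5 of \cite{segers2014hybrid}.
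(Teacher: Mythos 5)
Your proposal is correct and follows essentially the same route as the paper: a VC (hence $P$-Donsker) class of indicator-type functions involving the missingness indicators, Slutsky's lemma to absorb the random normalisations $n/n_j$ and $n/N$ into the limits $p_j^{-1}$, $p^{-1}$, and the \textbf{MCAR} factorisation $\mathbb{E}[I_j I_k\,\cdot\,] = p_{jk}\,\mathbb{E}[\,\cdot\,]$ for the covariance identities. The only difference is cosmetic: you start from the exact additive identity $\sqrt{n}(\hat{F}_{n,j}-F_j)=(n/n_j)\,\mathbb{G}_n(\text{centred class})$, whereas the paper writes $\hat{F}_{n,j}$ as a ratio of empirical processes and rearranges before applying Slutsky, both yielding the same asymptotic representation.
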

                Proof of Lemma \ref{lemma_1} is postponed to \ref{proof_covariance_function_hybrid}.
                \label{proof_weak_conv_hybrid_mado}
                \begin{proof}[Proof of Theorem \ref{weak_conv_hybrid_mado}]
                    First, let us define the rank-corrected hybrid copula process suited with our estimator and its associated empirical copula process by
                    \begin{equation*}
            			\hat{C}_n^{\mathcal{R}}(\textbf{u}) = \frac{1}{\sum_{i=1}^n \Pi_{j=1}^d I_{i,j}} \sum_{i=1}^n \Pi_{j=1}^d \mathds{1}_{\left\{ \widetilde{U}_{i,j} \leq u_j \right\}} I_{i,j}, \quad \mathbb{C}_n^{\mathcal{R}} = \sqrt{n}\left(\hat{C}_n^{\mathcal{R}}- C\right).
            		\end{equation*}
            		One can show that
            		 \begin{equation*}
                    	\underset{\textbf{u} \in [0,1]^d}{\sup} \left| \hat{C}_n^\mathcal{H}(\textbf{u}) - C_n^{\mathcal{R}}(\textbf{u}) \right| \leq \frac{2d}{n \hat{p}_n},
                    \end{equation*}
                    with $\hat{p}_n = n^{-1} \sum_{i=1}^n \Pi_{j=1}^d I_{i,j}$. Note that $\hat{p}_n$ converges in probability to $p \in ]0,1]$ which implies that the difference between $\hat{C}_n^\mathcal{H}$ and $\hat{C}_n^{\mathcal{R}}$ is asymptotically negligible.
                    Details for the proof are given solely for the estimator $\hat{\nu}_n^\mathcal{H*}$ as the weak convergence for $\hat{\nu}_n^\mathcal{H}$ is obtained similarly via an adequate continuous transformation of $\hat{\nu}_n^\mathcal{H}$ with $\hat{C}_n^{\mathcal{R}}$. Using that $\mathbb{E}[F_j(X_j)^\alpha] = (1+\alpha)^{-1}$ for $\alpha \neq -1$, we can write $\nu(\textbf{w})$ as :
                    \begin{align*}
                        \nu(\textbf{w}) =& \mathbb{E}\left[\bigvee_{j=1}^d \left\{ F_j(X_j) \right\}^{1/w_j} - \frac{1}{d} \sum_{j=1}^d \left\{F_j(X_j)\right\}^{1/w_j} \right] +
                        \sum_{j=1}^d \frac{\lambda_j(\textbf{w})(d-1)}{d} \left( \frac{w_j}{1+w_j} - \mathbb{E}\left[ \left\{F_j(X_j)\right\}^{1/w_j} \right]\right) \\
                        =& \mathbb{E}\left[\bigvee_{j=1}^d \left\{ F_j(X_j) \right\}^{1/w_j}\right] - \frac{1}{d}\sum_{j=1}^d (1+\lambda_j(\textbf{w})(d-1)) \mathbb{E}\left[ \left\{F_j(X_j)\right\}^{1/w_j} \right] + a(\textbf{w}),
                    \end{align*}
                    with $a(\textbf{w}) = (d-1)d^{-1} \sum_{j=1}^d \lambda_j(\textbf{w})w_j / (1+w_j)$. Let us note by $g_\textbf{w}$ the function defined as
                    \begin{equation*}
                        g_{\textbf{w}} : [0,1]^d \rightarrow [0,1], \quad \textbf{u} \mapsto \bigvee_{j=1}^d u_j^{1/w_j} - \frac{1}{d}\sum_{j=1}^d(1+\lambda_j(\textbf{w})(d-1)) u_j^{1/w_j}.
                    \end{equation*}
                    One can write our estimator of the $\textbf{w}$-madogram and the theoretical $\textbf{w}$-madogram in missing data framework as an integral with respect to the rank-corrected hybrid copula estimator and the copula function, respectively. We thus have:
                    \begin{align*}
                        \hat{\nu}_n^\mathcal{H*}(\textbf{w}) &= \frac{1}{N} \sum_{i=1}^n g_{\textbf{w}}\left(\widetilde{U}_{i,1}, \dots, \widetilde{U}_{i,d})\right) \Pi_{j=1}^d I_{i,j} + a(\textbf{w}) = \int_{[0,1]^d} g_{\textbf{w}}\left(\textbf{u}\right) d\hat{C}_n^{\mathcal{R}}(\textbf{u}) + a(\textbf{w}), \\
                        \nu(\textbf{w}) &= \int_{[0,1]^d} g_{\textbf{w}}\left(\textbf{u}\right) dC(\textbf{u}) + a(\textbf{w}).
                    \end{align*}
                    We obtain, proceeding as in Theorem 2.4 of \cite{MARCON20171} :
                    \begin{align*}
                        \sqrt{n}\left(\hat{\nu}_n^\mathcal{H*}(\textbf{w}) - \nu(\textbf{w}) \right) =& \frac{1}{d}\sum_{j=1}^d \left(1+\lambda_j(\textbf{w})(d-1)\right) \int_{[0,1]} \mathbb{C}_n^\mathcal{R}(\textbf{1}_j(x^{w_j})) dx - \int_{[0,1]} \mathbb{C}_n^\mathcal{R}\left(x^{w_1}, \dots, x^{w_d}\right) dx,
                    \end{align*}
                    where $\textbf{1}_j(u)$ denotes the vector composed out of $1$ except for the jth component where $u$ does stand and with $\mathbb{C}_n^\mathcal{H}$ in \eqref{hybrid_copula}. Consider the function $\phi : \ell^\infty([0,1]^d) \rightarrow \ell^\infty(\Delta^{d-1}),\,  f \mapsto \phi(f)$, defined by
                    \begin{equation*}
                        (\phi)(f)(\textbf{w}) = \frac{1}{d} \sum_{j=1}^d (1+\lambda_j(\textbf{w})(d-1)) \int_{[0,1]}f(\textbf{1}_j(x^{w_j}))dx - \int_{[0,1]} f(x^{w_1}, \dots, x^{w_d})dx.
                    \end{equation*}
                    This function is linear and bounded thus continuous. The continous mapping theorem (see,  \emph{e.g.}, Theorem 1.3.6 of \cite{vaartwellner96book}) implies, as $n \rightarrow \infty$
                    \begin{equation*}
                        \sqrt{n}(\hat{\nu}_n^\mathcal{H*} - \nu) = \phi(\mathbb{C}_n^\mathcal{R}) \rightsquigarrow \phi(S_C),
                    \end{equation*}
                    in $\ell^\infty(\Delta^{d-1})$. Recall that $S_C$ is the asymptotic process where $\mathbb{C}_n^\mathcal{H}$ does converge in the sense of the weak convergence in $\ell^\infty(\Delta^{d-1})$ and is defined by $S_C(\textbf{u}) = \alpha(\textbf{u}) - \sum_{j=1}^d \beta_j(u_j) \dot{C}_j (\textbf{u})$ with $\textbf{u} \in [0,1]^d$ and $\alpha$ and $\beta_j$ are processes defined in Lemma \ref{lemma_1}. We note that $S_C(\textbf{1}_j(x^{w_j})) = \alpha(\textbf{1}_j(x^{w_j})) - \beta_j(u_j)$ and we obtain our statement.
                \end{proof}
                The asymptotic normality of our estimators   directly comes down from being a linear transformation of a tight Gaussian process for $\textbf{w} \in \Delta^{d-1}$. The proof below uses technical arguments to exhibit the closed expressions of the asymptotic variances of the Gaussian  limit distributions of our estimators   in Equation (\ref{hybrid_lambda_fmado}) and (\ref{corrected_lambda_FMado_hybrid}). Furthermore, this proof strengthen our choice of the definition of the corrected estimator. Indeed, the chosen form of the corrected estimator makes computations more tractable as we only have to compute terms for the hybrid estimator and to multiply those by different factors. Two tools make the computation feasible. The first one is the form exhibited by Equation \eqref{pick_tail} which transforms a double integral with respect to the trajectory of the copula function as the double integral of a power function. When this trick is not possible, again the expression of the extreme value copula with respect to the Pickands dependence function is of main interest. Indeed, with some substitutions, we are able to express the double integrals as the integral with respect to the Pickands dependence function using the following equality :
                \begin{equation*}
                     -\int_{[0,1]} w^\alpha \ln(w) \,dw =  \frac{1}{(\alpha +1)^2},
                \end{equation*}
                where $\alpha \neq -1$. 
                \begin{proof}[\textbf{Proof of Proposition \ref{Boulin}}]
                    \label{proof_Boulin}
                    Recall that  $\emph{\textbf{p}} = (p_1, \dots, p_d, p)$.                     By definition the asymptotic variance $\mathcal{S}^\mathcal{H}(\textbf{p}, \textbf{w})$ for a fixed $\textbf{w} \in \Delta^{d-1}$  is given   by
                    \begin{equation*}
                        \mathcal{S}^\mathcal{H}(\textbf{p}, \textbf{w}) := Var\left(\frac{1}{d} \sum_{j=1}^d \int_{[0,1]} \alpha(\textbf{1}_j(x^{w_j})) - \beta_j(x^{w_j})dx- \int_{[0,1]} S_C(x^{w_1}, \dots, x^{w_d}) dx \right).
                    \end{equation*}
                    Using properties of the variance operator, we thus obtain 
                    \begin{align*}
                        \mathcal{S}^\mathcal{H}(\textbf{p}, \textbf{w}) &=\frac{1}{d^2} \sum_{j=1}^d Var\left( \int_{[0,1]} \alpha(\textbf{1}_j(x^{w_j})) - \beta_j(x^{w_j})dx\right) + Var\left( \int_{[0,1]} S_C(x^{w_1}, \dots, x^{w_d}) dx\right) \\ &+\frac{2}{d^2}\sum_{j <k}cov\left(\int_{[0,1]} \alpha(\textbf{1}_j(x^{w_j})) - \beta_j(x^{w_j})dx,\int_{[0,1]} \alpha(\textbf{1}_k(x^{w_k})) - \beta_k(x^{w_k})dx \right) \\&- 
                        \frac{2}{d} \sum_{j=1}^d cov \left(\int_{[0,1]} \alpha(\textbf{1}_j(x^{w_j})) - \beta_j(x^{w_j})dx, \int_{[0,1]} \alpha(x^{w_1}, \dots, x^{w_d})dx\right) \\
                        &+\frac{2}{d} \sum_{j=1}^d \sum_{k=1}^d cov \left(\int_{[0,1]} \alpha(\textbf{1}_j(x^{w_j})) - \beta_j(x^{w_j})dx, \int_{[0,1]} \beta_k(x^{w_k}) \dot{C}_k(x^{w_1}, \dots, x^{w_d})dx\right) .
                    \end{align*}
                    By definition of the covariance functions of $\alpha$ , $\beta_j$ with $j \in \{1,\dots,d\}$ given in Lemma \ref{lemma_1}, we have for the variance terms
                    \begin{align*}
                        \label{sigma_j}
                        Var\left( \int_{[0,1]} \alpha(\textbf{1}_j(x^{w_j})) - \beta_j(x^{w_j})dx\right) &= \left(p^{-1} - p_j^{-1} \right) \sigma_j^2(\textbf{w}), \\
                        Var\left( \int_{[0,1]} S_C(x^{w_1}, \dots, x^{w_d}) dx\right) &= \sigma_{d+1}^2(\textbf{p}, \textbf{w}).
                    \end{align*}
                    We obtain similarly for the covariance terms 
                    \begin{align*}
                        cov\left(\int_{[0,1]} \alpha(\textbf{1}_j(x^{w_j})) - \beta_j(x^{w_j})dx,\int_{[0,1]} \alpha(\textbf{1}_k(x^{w_k})) - \beta_k(x^{w_k})dx \right) &=\left(p^{-1}-p_j^{-1}-p_k^{-1}+\frac{p_{jk}}{p_jp_k}\right) \sigma_{jk}(\textbf{w}), \\
                        cov \left(\int_{[0,1]} \alpha(\textbf{1}_j(x^{w_j})) - \beta_j(x^{w_j})dx, \int_{[0,1]} \alpha(x^{w_1}, \dots, x^{w_d})dx\right) &= \left(p^{-1} - p_j^{-1} \right) \sigma_{j}^{(1)}(\textbf{w}),\\
                        cov \left(\int_{[0,1]} \alpha(\textbf{1}_j(x^{w_j})) - \beta_j(x^{w_j})dx, \int_{[0,1]} \beta_k(x^{w_k}) \dot{C}_k(x^{w_1}, \dots, x^{w_d})dx\right) &=
                        \left(p_k^{-1} - \frac{p_{jk}}{p_jp_k} \right)\sigma_{jk}^{(2)}(\textbf{w}).
                    \end{align*}
                    We first show in details the closed form for $\sigma^2_{d+1}$, the other forms are given without explanations as the technical tools used are those used for $\sigma^2_{d+1}$. Proceding as before, we decompose this quantity as a linear combination of the variance (the squared term $\gamma_1^2$ and $\gamma_j^2$ for $j \in \{1,\dots,d\}$) and the covariance terms ($\gamma_{1j}$ and $\tau_{jk}$) with the probabilities of missing. The explicit formula of these quantities will be defined below. We set
                    \begin{equation}
                        \label{Var_N_C}
                        \sigma_{d+1}^2(\textbf{p}, \textbf{w}) = p^{-1} \gamma_1^2(\textbf{w}) + \sum_{j=1}^d p_j^{-1} \gamma_j^2(\textbf{w})-2\sum_{j=1}^d p_j^{-1} \gamma_{1j}(\textbf{w}) + 2 \sum_{j < k} \frac{p_{jk}}{p_jp_k} \tau_{jk}(\textbf{w}).
                    \end{equation}
                    Let us   exhibit a useful form of the partial derivatives of the extreme value copula. We have $\forall j \in \{1,\dots,d\}$ :
                    \begin{equation*}
                        \dot{C}_j(\textbf{u}) = \frac{C(\textbf{u})}{u_j} \dot{\ell}_j(-\ln(u_1) , \dots, -\ln(u_d)).
                    \end{equation*}
                    Furthermore, as $\ell(x_1, \dots, x_d)$ is homogeneous of degree 1, the partial derivative $\dot{\ell}_j (x_1,\dots,x_d)$ is homogeneous of degree 0 for $j\in \{1,\dots,d\}$. We thus obtain a suitable form of the partial derivatives of the extreme value copula for $u \in ]0,1[$ and $\textbf{w} \in \Delta^{d-1}$ :
                    \begin{align*}
                        \dot{C}_j(u^{w_1}, \dots, u^{w_d}) &= \frac{u^{A(\textbf{w})}}{u^{w_j}} \dot{\ell}_j(-w_1\ln(u) , \dots, -w_d\ln(u)) = \frac{u^{A(\textbf{w})}}{u^{w_j}} \dot{\ell}_j(-w_1 , \dots, -w_d) = \frac{u^{A(\textbf{w})}}{u^{w_j}} \mu_j(\textbf{w}),
                    \end{align*}
                    where $\mu_j(\textbf{w}) \triangleq \dot{\ell}_j(-w_1, \dots, -w_d)$. Now, using linearity of the integral and the definition of the covariance function of $\alpha$, we obtain 
                    \begin{align*}
                        p^{-1}\gamma_1^2(\textbf{w}) &\triangleq \mathbb{E}\left[\int_{[0,1]} \alpha(u^{w_1}, \dots, u^{w_d}) du \int_{[0,1]} \alpha(v^{w_1}, \dots, v^{w_d}) dv\right] \\
                                                     & = \frac{2}{p}\int_{[0,1]}\int_{[0,v]}u^{A(\textbf{w})}(1-v^{A(\textbf{w})})duv.
                    \end{align*}
                    Let us compute
                    \begin{align*}
                        \gamma_1^2(\textbf{w}) = 2\int_{[0,1]}\int_{[0,v]}u^{A(\textbf{w})}(1-v^{A(\textbf{w})})duv = \frac{1}{(1+A(\textbf{w}))^2}\frac{A(\textbf{w})}{2+A(\textbf{w})}.
                    \end{align*}
                    The quantity $\gamma_j^2(\textbf{w})$ is defined by the following
                    \begin{align*}
                        p_j^{-1}\gamma_j^2(\textbf{w}) &\triangleq \mathbb{E}\bigg[ \int_{[0,1]} \beta_j(u^{w_j}) \dot{C}_j(u^{w_1}, \dots, u^{w_d})du\int_{[0,1]} \beta_j(u^{w_j}) \dot{C}_j(v^{w_1}, \dots, v^{w_d})dv\bigg] \\
                        &=  \frac{2}{p_j} \int_{[0,1]} \int_{[0,v]}u^{w_j}(1-v^{w_j})\mu_j(\textbf{w}) \mu_j(\textbf{w}) u^{A(\textbf{w})-w_j} v^{A(\textbf{w})-w_j}duv.
                    \end{align*}
                    It is clear that
                    \begin{align*}
                        \gamma_j^2(\textbf{w}) &= 2 \int_{[0,1]} \int_{[0,v]}u^{w_j}(1-v^{w_j})\mu_j(\textbf{w}) \mu_j(\textbf{w}) u^{A(\textbf{w})-w_j} v^{A(\textbf{w})-w_j}duv = \left(\frac{\mu_j(\textbf{w})}{1+A(\textbf{w})}\right)^2 \frac{w_j}{2A(\textbf{w})+1+1-w_j}.
                    \end{align*}
                    We now deal with cross product terms, the first we define is
                    \begin{align*}
                        p_j^{-1}\gamma_{1j}(\textbf{w}) &\triangleq \mathbb{E}\bigg[\int_{[0,1]}\alpha(u^{w_1}, \dots, u^{w_d})du \int_{[0,1]} \beta_j(v^{w_j}) \dot{C}_j(v^{w_1}, \dots, v^{w_d})dv \bigg] \\
                        &= p_j^{-1}\int_{[0,1]^2} \left(C(u^{w_1}, \dots, (u\wedge v)^{w_j}, \dots, u^{w_d}) - u^{A(\textbf{w})}v^{w_j} \right)\dot{C}_j(v^{w_1}, \dots, v^{w_d}) duv.
                    \end{align*}
                    Under the rectangle $[0,1]\times [0,v]$, we have
                    \begin{align*}
                        \gamma_{1j}(\textbf{w}) &=\int_{[0,1]\times [0,v]} \left(C(u^{w_1}, \dots, u^{w_j}, \dots, u^{w_d}) - u^{A(\textbf{w})}v^{w_j} \right)\dot{C}_j(v^{w_1}, \dots, v^{w_d}) duv \\
                        &= \int_{[0,1]\times [0,v]} u^{A(\textbf{w})}(1-v^{w_j})v^{A(\textbf{w})-w_j} \mu_j(\textbf{w})duv
                        = \frac{\mu_j(\textbf{w})}{2(1+A(\textbf{w}))^2} \frac{w_j}{2A(\textbf{w})+1+(1-w_j)}.
                    \end{align*}
                    Under the rectangle $[0,1]\times [0,u]$, we have for the right term
                    \begin{align*}
                        \int_{[0,1]\times [0,u]}u^{A(\textbf{w})}v^{w_j}v^{A(\textbf{w})-w_j}\mu_j(\textbf{w})dvu = \frac{\mu_j(\textbf{w})}{2(1+A(\textbf{w}))^2}. 
                    \end{align*}
                    For the left term, by definition, we have
                    \begin{equation*}
                        \int_{[0,1]\times [0,u]} C(u^{w_1}, \dots, v^{w_j}, \dots, u^{w_d})\dot{C}_j(v^{w_1}, \dots, v^{w_d}) dvu.
                    \end{equation*}
                    Let us consider the substitution $x = v^{w_j}$ and $y = u^{1-w_j}$, we obtain
                    \begin{align*}
                        \frac{1}{w_j(1-w_j)} \int_{[0,1]} \int_{[0, y^{w_j/(1-w_j)}]}C\left(y^{w_1/(1-w_j)}, \dots, x,\dots, y^{w_d/(1-w_j)}\right) \times \\ \dot{C}_j\left(x^{w_1/w_j}, \dots, x^{w_d/w_j}\right) x^{(1-w_j)/w_j} y^{w_j/(1-w_j)}dxy.
                    \end{align*}
                    Let us compute the quantity
                    \begin{align*}
                        \dot{C}_j(x^{w_1/w_j}, \dots, x^{w_d/w_j}) = \frac{C(x^{w_1/w_j}, \dots, x^{w_d/w_j})}{x} \mu_j(\textbf{w}).
                    \end{align*}
                    Using Equation (\ref{evc}), we have
                    \begin{align*}
                        C(x^{w_1/w_j}, \dots, x^{w_d/w_j}) &= \exp\left(-\ell\left(-\frac{\ln(x)}{w_j} w_1, \dots, \frac{\ln(x)}{w_j} w_d \right)\right) \\ &= \exp\left(-\frac{\ln(x)}{w_j} \ell\left(-w_1, \dots, -w_d\right)\right) = x^{A(\textbf{w})/w_j} = x^{A_j(\textbf{w})},
                    \end{align*}
                    where we use the homogeneity of order one of $\ell$ and that $-\ell(-w_1, \dots,-w_d) = A(\textbf{w})$ as stated by the identity of Equation (\ref{pick_tail}) and that $\textbf{w} \in \Delta^{d-1}$. Now, consider the substitution $x = w^{1-s}$ and $y = w^s$, the jacobian of this transformation is given by $-\ln(w)$, we have
                    \begin{align*}
                        -\frac{\mu_j(\textbf{w})}{w_j(1-w_j)} \int_{[0,1]} \int_{[0, 1-w_j]}C\left(w^{sw_1/(1-w_j)}, \dots, w^{1-s},\dots, w^{sw_d/(1-w_j)}\right) w^{(1-s)\left[A_j(\textbf{w})+\frac{1-w_j}{w_j}-1\right]+ s \frac{w_j}{1-w_j}}\ln(w)dsw,
                    \end{align*}
                    where we note by $A_j(\textbf{w}) := A(\textbf{w}) / w_j$ with $j \in \{1,\dots,d\}$. We now compute the quantity
                    \begin{equation*}
                        C\left(w^{sw_1/(1-w_j)}, \dots, w^{1-s},\dots, w^{sw_d/(1-w_j)}\right).
                    \end{equation*}
                    Using the same techniques as above, we have
                    \begin{align*}
                        C\left(w^{sw_1/(1-w_j)}, \dots, w^{1-s},\dots, w^{sw_d/(1-w_j)}\right) &= \exp\left( - \ell\left(-\frac{sw_1}{1-w_j}\ln(w), \dots, -(1-s)\ln(w),\dots, - \frac{sw_d}{1-w_j}\ln(w) \right) \right) \\
                        &= \exp\left( -\ln(w) \ell\left( -\frac{sw_1}{1-w_j}, \dots , -(1-s), \dots, -\frac{sw_d}{1-w_j}\right) \right).
                    \end{align*}
                    Now, using that $\textbf{w} \in \Delta^{d-1}$, remark that $s\sum_{i \neq j} w_i / (1-w_j) = s$, we have, using Equation (\ref{pick_tail})
                    \begin{equation*}
                        -\ell\left( -\frac{sw_1}{1-w_j}, \dots , -(1-s), \dots, -\frac{sw_d}{1-w_j}\right) = A\left(\textbf{z}_j(1-s)\right),
                    \end{equation*}
                    where $\textbf{z} = (sw_1/(1-w_j), \dots, sw_d/(1-w_j))$. So we have
                    \begin{align*}
                        \gamma_{1j}(\textbf{w}) &=  -\frac{\mu_j(\textbf{w})}{w_j(1-w_j)} \int_{[0, 1-w_j]} \int_{[0,1]}  w^{A\left(\textbf{z}_j(1-s)\right)+(1-s)\left(A_j(\textbf{w})+\frac{1-w_j}{w_j}-1\right)+ s \frac{w_j}{1-w_j}}\ln(w)dws \\
                        &= \frac{\mu_j(\textbf{w})}{w_j(1-w_j)} \int_{[0,1-w_j]} \bigg[A\left(\textbf{z}_j(1-s)\right)+(1-s)\left(A_j(\textbf{w})+\frac{1-w_j}{w_j}-1\right)+ s \frac{w_j}{1-w_j}+1\bigg]^{-2}ds.
                    \end{align*}
                    No further simplifications can be obtained. For $j < k$, let us define the quantity $\tau_{jk}$ such as
                    \begin{align}
                        \label{cov_ij}
                        \frac{p_{jk}}{p_jp_k}\tau_{jk}(\textbf{w}) &\triangleq \mathbb{E}\bigg[ \int_{[0,1]} \beta_j(u^{w_j}) \dot{C}_j(u^{w_1}, \dots, u^{w_d})du \int_{[0,1]}\beta_k(v^{w_k}) \dot{C}_k(v^{w_1}, \dots, v^{w_d})dv\bigg].
                    \end{align}
                    Again, we have
                    \begin{align*}
                        \tau_{jk}(\textbf{w})= \int_{[0,1]^2} \left(C(\textbf{1}_{jk}(u^{w_j}, v^{w_j})) - u^{w_j}v^{w_j}\right) \dot{C}_j(u^{w_1}, \dots, u^{w_d})\dot{C}_k(v^{w_1}, \dots, v^{w_d})duv.
                    \end{align*}
                    We set $x = u^{w_j}$ and $y = v^{w_k}$, the left side becomes
                    \begin{align*}
                        \tau_{jk}(\textbf{w})&=\frac{1}{w_jw_k} \int_{[0,1]^2} C(\textbf{1}_{jk}(x, y))\dot{C}_j(x^{w_1/w_j}, \dots, x^{w_d/w_j}) \dot{C}_k(y^{w_1/w_k}, \dots, y^{w_d/w_k}) x^{(1-w_j)/w_j}y^{(1-w_k)/w_k}dxy
                        \\
                        &= \frac{\mu_j(\textbf{w}) \mu_k(\textbf{w})}{w_jw_k}\int_{[0,1]^2}C(\textbf{1}_{jk}(x, y)) x^{A_j(\textbf{w})+(1-w_j)/w_j-1}y^{A_k(\textbf{w})+(1-w_k)/w_k-1}dxy.
                    \end{align*}
                    Now, we set $x = w^{1-s}$ and $y=w^s$ and we obtain
                    \begin{align*}
                        &\tau_{jk}(\textbf{w}) = \\ &\frac{\mu_j(\textbf{w}) \mu_k(\textbf{w})}{w_jw_k} \int_{[0,1]} \bigg[A(\textbf{0}_{jk}(1-s, s)) + (1-s)\left(A_j(\textbf{w})+\frac{1-w_j}{w_j}-1 \right)+ s\left(A_k(\textbf{w}) + \frac{1-w_k}{w_k}-1 \right)+1\bigg]^{-2}ds.
                    \end{align*}
                    The right side of Equation (\ref{cov_ij}) is given by
                    \begin{equation*}
                       \int_{[0,1]^2}  u^{w_j}v^{w_k} \dot{C}_j(u^{w_1}, \dots, u^{w_d})\dot{C}_k(v^{w_1}, \dots, v^{w_d})duv = \frac{\mu_j(\textbf{w}) \mu_k(\textbf{w})}{(1+A(\textbf{w}))^2}.
                    \end{equation*}
                    Hence the result for $\sigma_{d+1}^2(\textbf{w})$. Using the same techniques, we show that for $j\in \{1,\dots,d\}$
                    \begin{equation*}
                        \sigma_j^2(\textbf{w}) = \int_{[0,1]^2} (u\wedge v)^{w_j} - u^{w_j} v^{w_j}duv = \frac{1}{(1+w_j)^2} \frac{w_j}{2+w_j}.
                    \end{equation*}
                    For $j < k$, we compute
                    \begin{align*}
                        \sigma_{jk}(\textbf{w}) &= \int_{[0,1]^2} C(\boldsymbol{1}_{jk}(u^{w_j}, v^{w_k})) - u^{w_j}v^{w_k}duv \\
                        &= \frac{1}{w_jw_k} \int_{[0,1]} \left[ A(\boldsymbol{0}_{jk}(1-s,s)) + (1-s)\frac{1-w_j}{w_j} +s\frac{1-w_k}{w_k}+1 \right]^{-2}ds - \frac{1}{1+w_j}\frac{1}{1+w_k}.
                    \end{align*}
                    Let $j \in \{1,\dots,d\}$, thus
                    \begin{align*}
                        \sigma_j^{(1)}(\textbf{w}) &= \int_{[0,1]^2} C\left(u^{w_1}, \dots, (u \wedge v)^{w_j}, \dots, u^{w_d}\right) - C(u^{w_1}, \dots, u^{w_d})v^{w_j}ds \\
                        &= \frac{1}{w_j(1-w_j)} \int_{[0,1]} \left[ A(\textbf{z}_j(1-s) + (1-s)\frac{1-w_j}{w_j} + s\frac{w_j}{1-w_j}+1\right]^{-2}ds + \frac{1}{1+A(\textbf{w})} \left[ \frac{1}{2+A(\textbf{w})} - \frac{1}{1+w_j}\right].
                    \end{align*}
                    Now, for $\sigma^{(2)}_{jk}$, we have to consider three cases :
                    \begin{itemize}
                        \item if $j = k$, we directly have
                        \begin{align*}
                           \hspace{-7.4cm} \sigma^{(2)}_{jk}(\textbf{w}) &= 0,
                        \end{align*}
                        \item if $j < k$, we obtain  
                        \begin{align*}
                        &\sigma^{(2)}_{jk}(\textbf{w}) \\ &= \frac{\mu_k(\textbf{w})}{w_j w_k} \int_{[0,1]} \bigg[ A(\textbf{0}_{jk}(1-s,s)) + (1-s)\frac{1-w_j}{w_j} + s\left(A_k(\textbf{w}) + \frac{1-w_k}{w_k} -1\right) + 1\bigg]^{-2}ds  - \frac{\mu_k(\textbf{w})}{1 + A(\textbf{w})} \frac{1}{1+w_j},
                    \end{align*}
                    \item   if $j > k$, we have
                    \begin{align*}
                        &\sigma^{(2)}_{jk}(\textbf{w})  \\ &= \frac{\mu_k(\textbf{w})}{w_j w_k} \int_{[0,1]} \bigg[ A(\textbf{0}_{kj}(1-s,s)) + s\frac{1-w_j}{w_j} + (1-s)\left(A_k(\textbf{w}) + \frac{1-w_k}{w_k} -1\right) + 1\bigg]^{-2}ds  - \frac{\mu_k(\textbf{w})}{1 + A(\textbf{w})} \frac{1}{1+w_j}.
                    \end{align*}
                    \end{itemize}
                    Hence the statement.
                \end{proof}
                \label{proof_strong_consistency}
                The following lines will give some details to establish the explicit formula of the asymptotic variance when we suppose that components of the random vector $\textbf{X}$ are independent. In this framework, we have that $\mu_j(\textbf{w}) = 1$ for every $j \in \{1,\dots,d\}$ and thus $\dot{C}_j(u^{w_1},\dots, u^{w_d}) = u^{1-w_j}$. Furthermore, in the independent case, most of the integrals are reduced to zero.
                \label{proof_coro_independent}
                \begin{proof}[Proof of Corollary \ref{coro_independent}]
                    In the term $\sigma_{d+1}^2$ given in Equation \eqref{Var_N_C}, only the terms $\gamma_1^2$, $\gamma_j^2$ and $\gamma_{1j}$ matter because, in the independent case :
                    \begin{equation*}
                        \tau_{jk}(\textbf{w}) = \int_{[0,1]^2}\left( u^{w_j} v^{w_k} - u^{w_j}v^{w_k}\right)\dot{C}_j(u^{w_1}, \dots, u^{w_d})\dot{C}_k(v^{w_1}, \dots, v^{w_d})duv = 0.
                    \end{equation*}
                    For $\gamma_{1j}$, we have to compute
                    \begin{equation*}
                        \gamma_{1j}(\textbf{w}) = 2\int_{[0,1]\times [0,v]} u(1-v^{w_j})v^{1-w_j} duv
                        = \frac{1}{4} \frac{w_j}{4-w_j}.
                    \end{equation*}
                    For $\gamma_1^2$ and $\gamma_j^2$, we just have to set $A(\textbf{w}) = 1$ in their respective expressions to obtain : 
                    \begin{equation*}
                        \gamma_1^2(\textbf{w}) = \frac{1}{12}, \quad \gamma_{j}^2 = \frac{1}{4} \frac{w_j}{4-w_j}.
                    \end{equation*}
                    We thus have 
                    \begin{equation*}
                        \sigma_{d+1}^2(\textbf{p}, \textbf{w}) = \frac{1}{4} \left(\frac{1}{3p} - \sum_{j=1}^d p_j^{-1} \frac{w_j}{4-w_j} \right).
                    \end{equation*}
                Other computations follow from the same arguments.
                \end{proof}
                 We are now going to prove Proposition \ref{strong_consistency}. The strong consistency of the our estimators will be established in a two-step process : first, we prove the strong consistency of the estimator $\nu_n(\textbf{w})$ which is the nonparametric estimator of the $\textbf{w}$-madogram with known margins and, second, we show that the limit of
                \begin{equation*}
                    \underset{j \in \{1, \dots,d\}}{\sup}\underset{i \in \{1, \dots,n\}}{\sup} \left| \widetilde{U}_{i,j}^{1/w_j} - \left\{F_j(\tilde{X}_{i,j})\right\}^{1/w_j} \right|,
                \end{equation*}
                is zero almost surely. Before going into the main arguments, we need the following lemma.
                    \begin{lemma}
                        \label{lemma_2}
                            We have, $\forall i \in \{1,\dots,n\}$
                            \begin{equation*}
                                \left| \bigvee_{j=1}^d \widetilde{U}_{i,j}^{1/w_j} - \bigvee_{j=1}^d \big\{F_j(X_j)\big\}^{1/w_j}\right| \leq \underset{j\in \{1,\dots,d\}}{\sup} \left|\widetilde{U}_{i,j}^{1/w_j}  - \big\{ F_j(X_j)\big\}^{1/w_j} \right|.
                            \end{equation*}
                        \end{lemma}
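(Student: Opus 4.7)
The plan is straightforward because the lemma amounts to the well-known fact that the functional $(x_1,\dots,x_d) \mapsto \bigvee_{j=1}^d x_j$ on $\mathbb{R}^d$ is $1$-Lipschitz with respect to the supremum norm. I would apply this abstract inequality to the two $d$-tuples $a_j := \widetilde{U}_{i,j}^{1/w_j}$ and $b_j := \{F_j(X_j)\}^{1/w_j}$, with $i$ held fixed.

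The key step is to fix an arbitrary index $j \in \{1,\dots,d\}$ and write
\begin{equation*}
a_j \;=\; b_j + (a_j - b_j) \;\leq\; \bigvee_{k=1}^d b_k + \sup_{k \in \{1,\dots,d\}} |a_k - b_k|.
\end{equation*}
Since the right-hand side no longer depends on $j$, taking the maximum over $j$ on the left yields
\begin{equation*}
\bigvee_{j=1}^d a_j - \bigvee_{j=1}^d b_j \;\leq\; \sup_{k \in \{1,\dots,d\}} |a_k - b_k|.
\end{equation*}
Exchanging the roles of $a$ and $b$ and repeating the argument gives the symmetric bound, and combining the two produces the absolute-value estimate claimed by the lemma.

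There is essentially no obstacle here; the proof is a two-line algebraic manipulation whose only subtlety is passing to the supremum on the right-hand side before optimizing on the left, a standard device for bounding $1$-Lipschitz functionals. The real purpose of this lemma is downstream: it reduces control of $|\bigvee_{j} \widetilde{U}_{i,j}^{1/w_j} - \bigvee_{j} \{F_j(X_j)\}^{1/w_j}|$, uniformly in $i$, to control of the componentwise discrepancy $|\widetilde{U}_{i,j}^{1/w_j} - \{F_j(X_j)\}^{1/w_j}|$, which can then be handled via a uniform Glivenko--Cantelli bound on each empirical marginal in the two-step strong consistency argument announced just before the lemma.
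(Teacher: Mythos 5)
Your proof is correct and follows essentially the same route as the paper: fix $j$, bound $a_j \leq \bigvee_k b_k + \sup_k|a_k-b_k|$, take the maximum over $j$, and conclude the reverse inequality by symmetry. Nothing to add.
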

                    The proof of Lemma \ref{lemma_2} is postponed to \ref{proof_lemmeta}.
                \begin{proof}[Proof of Proposition \ref{strong_consistency}] 
                We prove it for $\hat{\nu}_n^{\mathcal{H}}(\textbf{w})$ as the strong consistency for $\hat{\nu}_n^{\mathcal{H}*}(\textbf{w})$ uses  the same arguments. The estimator $\hat{\nu}_n^\mathcal{H}(\textbf{w})$ in \eqref{hybrid_lambda_fmado} is strongly consistent since it holds   
                    \begin{align*}
                        \left| \hat{\nu}_n^\mathcal{H}(\textbf{w}) - \nu(\textbf{w})\right| &= \left| \hat{\nu}^\mathcal{H}_n(\textbf{w}) - \nu_n(\textbf{w}) + \nu_n(\textbf{w}) - \nu(\textbf{w}) \right|         \leq \left| \hat{\nu}^\mathcal{H}_n(\textbf{w}) - \nu_n(\textbf{w}) \right| + \left| \nu_n(\textbf{w}) - \nu(\textbf{w})\right|,
                    \end{align*}
                    where 
                    \begin{equation*}
                        \nu_n(\textbf{w}) = \frac{1}{N} \sum_{i=1}^n \left[ \left( \bigvee_{j=1}^d \left\{ F_j(\tilde{X}_{i,j}) \right\}^{1/w_j} - \frac{1}{d} \sum_{j=1}^d \left\{ F_j(\tilde{X}_{i,j}) \right\}^{1/w_j} \right)n_i \right].
                    \end{equation*}
                    By direct application of Assumption \ref{Cond_2} and the law of large number, we have that
                    \begin{equation*}
                        \left| \nu_n(\textbf{w}) - \nu(\textbf{w})\right| \overunderset{a.s.}{n \rightarrow \infty}{\rightarrow} 0
                    \end{equation*}
                    For the second term, we write :
                    \begin{align*}
                        \left| \hat{\nu}_n^{\mathcal{H}}(\textbf{w}) - \nu(\textbf{w}) \right| \leq& \frac{1}{N} \sum_{i=1}^n \left| \bigvee_{j=1}^d\left\{\hat{F}_{n,j}(\tilde{X}_{i,j})\right\}^{1/w_j} - \bigvee_{j=1}^d \left\{F_j(X_j)\right\}^{1/w_j} \right|n_i \\ &+ \frac{1}{Nd} \sum_{i=1}^n \sum_{j=1}^d \left| \left\{\hat{F}_{n,j}(\tilde{X}_{i,j})\right\}^{1/w_j} - \left\{F_j(\tilde{X}_{i,j})\right\}^{1/w_j}\right|n_i \\
                        \leq& 2\underset{j \in \{1, \dots,d\}}{\sup}\underset{i \in \{1, \dots,n\}}{\sup} \left| \left\{\hat{F}_{n,j}(\tilde{X}_{i,j})\right\}^{1/w_j} - \left\{F_j(\tilde{X}_{i,j})\right\}^{1/w_j} \right|,
                    \end{align*}
                    where we used Lemma \ref{lemma_2} to obtain the second inequality. The right term converges almost surely to zero by Glivencko-Cantelli Theorem and the uniform continuity of $x \mapsto x^{1/w_j}$ on $[0,1]$.
                \end{proof}
                Finally, we give some elements to establish Corollary \ref{asymptotic_distribution_pickands}. The strong consistency follows directly from the stability of the almost surely convergence through a continuous fuction. The weak convergence comes down from the functional Delta method (see, \textit{e.g.}, Theorem 3.9.4 of \cite{vaartwellner96book}) and from result   in Proposition \ref{Boulin}.
                \label{proof_asymptotic_distribution_pickands}
                \begin{proof}[Proof of Corollary \ref{asymptotic_distribution_pickands}]
                    Applying the functional Delta method, we have as $n \rightarrow \infty$,
                    \begin{align*}
                        \sqrt{n} \left(\hat{A}_n^{\mathcal{H}*}(\textbf{w}) - A(\textbf{w})\right) &\rightsquigarrow -\left(1+A(\textbf{w})\right)^2 \bigg\{\frac{1}{d}\sum_{j=1}^d \left( 1 + \lambda_j(\textbf{w})(d-1) \right)\int_{[0,1]} \alpha(\textbf{1}_j(x^{w_j})) - \beta_j(x^{w_j})dx \\ &- \int_{[0,1]} S_C(x^{w_1}, \dots, x^{w_d}dx)\bigg\}_{\emph{\textbf{w}} \in \Delta^{d-1}}.
                    \end{align*}
                    For a fixed $\textbf{w} \in \Delta^{d-1}$, as a linear transformation of a tight Gaussian process, it follows that
                    \begin{align*}
                        \sqrt{n} \left(\hat{A}_n^{\mathcal{H}*}(\textbf{w}) - A(\textbf{w})\right) \overunderset{d}{n \rightarrow \infty}{\rightarrow} \mathcal{N}\left(0, \mathcal{V}(\textbf{p}, \textbf{w})\right),
                    \end{align*}
                    with, $\mathcal{V}(\textbf{p}, \textbf{w})$ equals by definition
                    \begin{align*}
                        &Var\left(-\left(1+A(\textbf{w})\right)^2 \bigg\{\frac{1}{d}\sum_{j=1}^d \left( 1 + \lambda_j(\textbf{w})(d-1) \right)\int_{[0,1]} \alpha(\textbf{1}_j(x^{w_j})) - \beta_j(x^{w_j})dx 
                    - \int_{[0,1]} S_C(x^{w_1}, \dots, x^{w_d})\bigg\} dx \right) \\
                        &= (1+A(\textbf{w}))^4 \mathcal{S}^{\mathcal{H}*}(\textbf{p}, \textbf{w}),
                    \end{align*}
                    where we used Proposition \ref{Boulin} to conclude.
                \end{proof}
            \subsection{Proofs of auxiliary results}
                \label{proof_lemmeta}
                \label{proof_covariance_function_hybrid}
                \begin{proof}[Proof of Lemma \ref{lemma_1}]
                    Following \cite{segers2014hybrid} Example 3.5, we consider the functions from $\{0,1\}^d \times \mathbb{R}^d$ into $\mathbb{R}$ : for $\textbf{x} \in \mathbb{R}^d$, and $j \in \{1,\dots,d\}$
                    \begin{align*}
                        &f_j(\textbf{I}, \textbf{X}) = \mathds{1}_{\{I_j = 1\}}, \quad g_{j,x_j}(\textbf{I}, \textbf{X}) = \mathds{1}_{\{X_j \leq x_j, I_j = 1\}}, 
                        \quad f_{d+1} = \Pi_{j=1}^d f_j, \quad g_{d+1,\textbf{x}} = \Pi_{j=1}^d g_{j,x_j}.&
                    \end{align*}
                    Let $P$ denote the common distribution of the tuple $(\textbf{I}, \textbf{X})$. The collection of functions
                    \begin{equation*}
                        \mathcal{F} = \{f_1, \dots, f_{d}, f_{d+1}\} \cup \bigcup_{j=1}^d \{g_{j,x_j}, x_j \in \mathbb{R}\} \cup \{g_{d+1, \textbf{x}}, \textbf{x} \in \mathbb{R}^d\}
                    \end{equation*}
                    is a finite union of VC-classes and thus $P$-Donsker (see Chapter 2.6 of \cite{vaartwellner96book}). The empirical process $\mathbb{G}_n$ defined by
                    \begin{equation*}
                        \mathbb{G}_n(f) = \sqrt{n}\left(\frac{1}{n} \sum_{i=1}^n f(\textbf{I}_i, \textbf{X}_i) - \mathbb{E}[f(\textbf{I}_i, \textbf{X}_i)]\right), \quad f \in \mathcal{F},
                    \end{equation*}
                    converges in $\ell^\infty(\mathcal{F})$ to a $P$-brownian bridge $\mathbb{G}$. For $\textbf{x} \in \mathbb{R}^d$,
                    \begin{align*}
                        \hat{F}_{n,j}(x_j) = \frac{p_j F_j(x_j) + n^{-1/2}\mathbb{G}_{n}g_{j,x_j}}{p_j + n^{-1/2}\mathbb{G}_nf_j}, \\
                        \hat{F}_n(\textbf{x}) = \frac{p F(\textbf{x}) + n^{-1/2}\mathbb{G}_ng_{d+1,\textbf{x}}}{p + n^{-1/2}\mathbb{G}_n f_{d+1}}.
                    \end{align*}
                    We obtain for the second one
                    \begin{align*}
                        p\left( \hat{F}_n(\textbf{x}) - F(\textbf{x}) \right) &= n^{-1/2} \left(\mathbb{G}_n(g_{d+1,\textbf{x}}) - \hat{F}_n(\textbf{x})\mathbb{G}_n(f_{d+1})\right) \\
                        &= n^{-1/2}\left(\mathbb{G}_n(g_{d+1,\textbf{x}}-F(\textbf{x})f_{d+1})\right) - n^{-1/2}\mathbb{G}_n(f_{d+1})(\hat{F}_n(\textbf{x}) - F(\textbf{x})).
                    \end{align*}
                    We thus have
                    \begin{equation*}
                        \sqrt{n} \left(\hat{F}_n(\textbf{x}) - F(\textbf{x}) \right) = p^{-1}\left(\mathbb{G}_n(g_{d+1,\textbf{x}}-F(\textbf{x})f_{d+1})\right) - p^{-1}\mathbb{G}_n(f_{d+1})(\hat{F}_n(\textbf{x}) - F(\textbf{x})).
                    \end{equation*}
                    Applying the central limit theorem and Assumption \ref{Cond_2} gives that $\mathbb{G}_n(f_{d+1}) \overset{d}{\rightarrow} \mathcal{N}(0, \mathbb{P}(f_{d+1} - \mathbb{P}f_{d+1})^2)$, the law of large numbers gives also $\hat{F}_n(\textbf{x}) - F(\textbf{x}) = \circ_{\mathbb{P}}(1)$. Using Slutsky's lemma gives us
                    \begin{equation*}
                        \sqrt{n} \left(\hat{F}_n(\textbf{x}) - F(\textbf{x}) \right) = p^{-1}\left(\mathbb{G}_n(g_{d+1,\textbf{x}}-F(\textbf{x})f_{d+1})\right) + \circ_{\mathbb{P}}(1).
                    \end{equation*}
                    Similar reasoning might be applied to the margins, as a consequence, Assumption  \ref{Cond_3} is fulfilled with for $\textbf{u} \in [0,1]^d$,
                    \begin{align*}
                        &\beta_j(u_j) = p_j^{-1} \mathbb{G}\left(g_{j, F_j^\leftarrow(u_j)} - u_j f_j \right), \\
                        &\alpha(\textbf{u}) = p^{-1} \mathbb{G} \left(g_{d+1, \textbf{F}_d^\leftarrow(\textbf{u})} - C(\textbf{u})f_{d+1} \right).
                    \end{align*}
                    Let us compute one covariance function, the method still the same for the others, without loss of generality, suppose that $j<k$, we have for $u_j, v_k \in [0,1]$
                    \begin{align*}
                        cov(\beta_j(u_j), \beta_k(v_k)) &= \mathbb{E}\left[p_j^{-1} \mathbb{G}\left(g_{j, F_j^\leftarrow(u_j)} - u_j f_j \right) p_k^{-1} \mathbb{G}\left(g_{k, F_k^\leftarrow(v_k)} - v_k f_k \right)\right] \\
                        &= \frac{1}{p_j p_k}\mathbb{E}\left[\mathbb{G}\left(g_{j, F_j^\leftarrow(u_i)} - u_j f_j \right) \mathbb{G}\left(g_{k, F_k^\leftarrow(v_j)} - v_k f_k \right)\right] \\
                        &= \frac{1}{p_jp_k} \mathbb{P}\left\{ X_j \leq F_j^\leftarrow(u_j), X_k \leq F_k^\leftarrow(v_k), I_j =1, I_k= 1 \right\} - \frac{p_{jk}}{p_jp_k} u_jv_k \\
                        &= \frac{1}{p_jp_k}\mathbb{P}\left\{ X_j \leq F_j^\leftarrow(u_j), X_k \leq F_k^\leftarrow(v_k)\right\} \mathbb{P}\left\{ I_j = 1, I_k = 1\right\}-\frac{p_{jk}}{p_jp_k} u_jv_k \\
                        &= \frac{p_{jk}}{p_jp_k} \left( C(\textbf{1}_{jk}(u_j,v_k)) -u_jv_k\right).
                    \end{align*}
                    Hence the result.
                \end{proof}
                \begin{proof}[Proof of Lemma \ref{lemma_2}]
                The lemma becomes trivial once we write, $\forall i \in \{1,\dots,n\}$ and $j \in \{1,\dots,d\}$
                \begin{align*}
                    \widetilde{U}_{i,j}^{1/w_j} &= \left\{F_j(X_j)\right\}^{1/w_j} + \widetilde{U}_{i,j}^{1/w_j} - \left\{F_j(X_j)\right\}^{1/w_j} \\
                    &\leq \left\{F_j(X_j)\right\}^{1/w_j} + \underset{j\in \{1,\dots,d\}}{\sup} \left|\widetilde{U}_{i,j}^{1/w_j} - \left\{F_j(X_j)\right\}^{1/w_j}\right| \\
                    &\leq \bigvee_{j=1}^d \left\{F_j(X_j)\right\}^{1/w_j} + \underset{j\in \{1,\dots,d\}}{\sup} \left|\widetilde{U}_{i,j}^{1/w_j} - \left\{F_j(X_j)\right\}^{1/w_j}\right|.
                \end{align*}
                Taking the max over $j \in \{1,\dots,d\}$ gives
                \begin{align*}
                    \bigvee_{j=1}^d\widetilde{U}_{i,j}^{1/w_j} - \bigvee_{j=1}^d \left\{ F_j(X_j)\right\}^{1/w_j} \leq \underset{j\in \{1,\dots,d\}}{\sup} \left|\widetilde{U}_{i,j}^{1/w_j} - \left\{F_j(X_j)\right\}^{1/w_j}\right|.
                \end{align*}
                Moreover, by symmetry of $\widetilde{U}_{i,j}$ and $F_j$, the second one follows similarly.
            \end{proof}
            \bibliographystyle{apalike}
            \bibliography{trial}
\end{document}